\newtheorem{theorem}{Theorem}[section]
\DeclareSymbolFont{cyrletters}{OT2}{wncyr}{m}{n}\DeclareMathSymbol{\Sha}{\mathalpha}{cyrletters}{"58}
\DeclareMathSymbol{\FSha}{\mathalpha}{cyrletters}{"11}
\renewcommand{\phi}{{\varphi}}
\renewcommand{\geq}{\geqslant}
\renewcommand{\leq}{\leqslant}
\newcommand{\HIw}{H^1_{\mathrm{Iw}}}
\newcommand{\Fil}{\mathrm{Fil}}
\newcommand{\ur}{\mathrm{un}}
\newcommand{\links}{\left(\begin{array}{cc}}
\newcommand{\rechts}{\end{array}\right)}
\newcommand{\bai}{\left[\begin{array}{cc}}
\newcommand{\dai}{\end{array}\right]}
\newcommand{\hidari}{\left(\begin{array}{c}}
\newcommand{\migi}{\end{array}\right)}
\newcommand{\C}{\mathbb{C}}
\newcommand{\Q}{\mathbb{Q}}
\newcommand{\Z}{\mathbb{Z}}
\newcommand{\fS}{\mathfrak{S}}
\newcommand{\bJ}{\mathbf{J}}
\newcommand{\bV}{\mathbf{V}}
\newcommand{\bT}{\mathbf{T}}
\newcommand{\bA}{\mathbf{A}}
\newcommand{\bM}{\mathbf{M}}
\DeclareMathOperator{\Gr}{Gr}
\newcommand{\Gal}{\operatorname{Gal}}
\newcommand{\Frob}{\operatorname{Frob}}
\newcommand{\Hom}{\operatorname{Hom}}
\newcommand{\ord}{\operatorname{ord}}
\newcommand{\col}{\operatorname{Col}}
\newcommand{\im}{\operatorname{Im}}
\newcommand{\Sel}{\operatorname{Sel}}
\renewcommand{\O}{\mathcal{O}}
\newtheorem{auxiliary proposition}[theorem]{Auxiliary Proposition}
\newtheorem{conjecture}[theorem]{Conjecture}
\newtheorem{corollary}[theorem]{Corollary}
\newtheorem{definition}[theorem]{Definition}
\newtheorem{lemma}[theorem]{Lemma}
\newtheorem{main conjecture}[theorem]{Main Conjecture}
\newtheorem{main theorem}[theorem]{Main Theorem}
\newtheorem{modesty proposition}[theorem]{Modesty Proposition}
\newtheorem{open problem}[theorem]{Open Problem}
\newtheorem{proposition}[theorem]{Proposition}
\newtheorem{remark}[theorem]{Remark}
\newtheorem{convergence lemma}[theorem]{Convergence Lemma}
\newtheorem{corrected lemma}[theorem]{Corrected Lemma}
\newtheorem{growth lemma}[theorem]{Growth Lemma}
\newtheorem{coefficient lemma}[theorem]{Integrality Lemma}
\newtheorem{interpolation lemma}[theorem]{Interpolation Lemma}
\newtheorem{kernel lemma}[theorem]{Kernel Lemma}
\newtheorem{limit lemma}[theorem]{Limit Lemma}
\newtheorem{tandem lemma}[theorem]{Modesty Lemma}
\newtheorem{zero-finding lemma}[theorem]{Zero-Finding Lemma}
\title[~]{On a parity result for the symmetric square of modular forms with congruent residual representations}
\author{Jishnu Ray}
\address[Ray]{Harish Chandra Research Institute, A CI of Homi Bhabha National Institute,  Chhatnag Road, Jhunsi, Prayagraj (Allahabad) 211 019 India}
\email{jishnuray@hri.res.in; jishnuray1992@gmail.com}
\keywords{Iwasawa theory, congruences, Selmer groups, symmetric square, modular forms}
\subjclass[2020]{Primary: 11R23, 11F11, 11R18 Secondary: 11F85}
\begin{document}

\maketitle
\begin{abstract}
    The parity of Selmer ranks for elliptic curves defined over the rational numbers $\Q$ with good ordinary reduction at an odd prime $p$ has been studied by Shekhar. The proof of Shekhar relies on proving a parity result for  the $\lambda$-invariants of Selmer groups over the cyclotomic $\Z_p$-extension $\Q_\infty$ of $\Q$.
    This has been further generalized for elliptic curves with supersingular reduction at $p$ by Hatley and for modular forms by Hatley--Lei. In this paper, we prove a parity result for the $\lambda$-invariants of Selmer groups over $\Q_\infty$ for the symmetric square representations associated to two modular forms  with congruent residual Galois representations. We treat both the ordinary and the non-ordinary cases. 
\end{abstract}
\section{Introduction}
Suppose $K$ is a number field and fix an odd prime $p$. Let $K_\infty$ be the cyclotomic $\Z_p$-extension of $K$. Let $E_1$ and $E_2$ be two elliptic curves defined over $\Q$ both having good ordinary reduction at the prime $p$. Let $\Sigma$ be a finite set of primes of $K$ containing the primes of bad reduction of $E_1$ and $E_2$, the infinite primes and the primes above $p$. Suppose that $E_1$ and $E_2$ are congruent at $p$, i.e. $E_1[p] \cong E_2[p]$ as representations of the absolute Galois group $G_\Q=\Gal(\overline{\Q}/\Q)$. Furthermore assume that $K=K(\mu_p)$ and  $E_1[p]$ is an irreducible $G_K$-module (and hence $E_2[p]$ will also be an irreducible $G_K$-module). Then, under the additional assumption that the Iwasawa $\mu$-invariant of the Greenberg's $p$-Selmer group $\Sel_{p^\infty}(E_1/K_\infty)$ vanishes (and hence also for $\Sel_{p^\infty}(E_2/K_\infty)$), Shekhar proved that 
\begin{equation}\label{Shekhar}
   \mathrm{corank}_{\O_K}\Sel_{p^\infty}(E_1/K) +|S_{E_1}| \equiv \mathrm{corank}_{\O_K}\Sel_{p^\infty}(E_2/K) +|S_{E_2}| \pmod{2}. 
\end{equation}
Here $S_{E_i}$ is an explicitly determined subset of primes in $\Sigma$ (cf. \cite[Theorem 1.1]{Shekhar16}). Assuming that the Shafarevich-Tate group $\Sha(E_i/K)[p^\infty]$ is finite, this gives the parity of ranks of elliptic curves with equivalent mod-$p$ Galois representations. If $K \subset \Q(\mu_{p^n},m^{1/p^n})$ for $m,n \in \Z_{\geq 1}$ and $K$ is a Galois extension over $\Q$, then the $p$-parity conjecture holds, i.e. 
$$\mathrm{corank}_{\O_K}\Sel_{p^\infty}(E_i/K) \equiv r_{\mathrm{an}}(E_i) \pmod{2}.$$
Here $r_{\mathrm{an}}(E_i)$ is the order of zero of the complex $L$-function $L(E_i/K)$ at $1$ (see \cite[Conjecture 1 and Theorem 4.3]{Shekhar16}). This gives a parity of analytic ranks for congruence mod-$p$ Galois representations.

Let $\lambda(E_i/K_\infty)$ be the Iwasawa $\lambda$-invariant attached to $\Sel_{p^\infty}(E_i/K_\infty)$. The proof of \eqref{Shekhar} relies primarily on proving the following:
\begin{equation}\label{lambda2} \lambda(E_1/K_\infty)+|S_{E_1}| \equiv \lambda(E_2/K_\infty)+|S_{E_2}| \pmod{2}   
\end{equation}
and then using a result proved by Greenberg (cf. \cite[Prop. 3.10]{Greenberg}), which is 
\begin{equation}\label{Green}
  \lambda(E_i/K_\infty) \equiv \mathrm{corank}_{\O_K}\Sel_{p^\infty}(E_i/K) \pmod{2}.  
\end{equation}

Shekhar's result has been generalized to elliptic curves with supersingular reduction at the prime $p$ \cite{Hatley17} and  to modular forms \cite[Theorem 5.7]{HatleyLei19}. Although the result in \cite{HatleyLei19} is written for modular forms non-ordinary at $p$ but essentially the same technique will also work for modular forms ordinary at $p$ with signed Selmer groups replaced by classical $p$-Selmer group using \cite[Theorem 4.3.4 (ii)]{EmertonPollackWeston}. The main goal in this paper is to prove an analogue of \eqref{lambda2} for the symmetric square representations associated to modular forms ordinary at $p$. 
Let $f_i=\sum a_n(f_i)q^n$ (for $i=1,2$) be two normalized new cuspidal eigenforms of the same weight $k$, level $N_i$ (coprime to $p$) and character $\varepsilon_i$. Assume that they are non CM and their residual representations are isomorphic and are irreducible as $G_\Q$-modules. For any Dirichlet character $\psi$ of conductor coprime to $p$, let $\bV_{f_i,\psi}$ be the symmetric square representation associate to $f_i$ twisted by $\psi$. We can enlarge the coefficient field and choose an extension $L$ over $\Q$ which contains the image of $\psi$, and all the coefficients of $f_1$ and $f_2$.  There is a unique Galois stable lattice which is denoted by $\bT_{f_i,\psi}$ and let $\bA_{f_i,\psi}=\bV_{f_i,\psi}/\bT_{f_i,\psi}.$ Let $\mathfrak{P}$ be a prime of $L$ above $p$. Assume that $f_i$ is ordinary at $\mathfrak{P}$. One can choose such a nontrivial even Dirichlet character  where it is known that the Selmer group $\Sel_{p^\infty}(\bA_{f_i,\psi}/\Q_\infty)$ is cotorsion (cf. \cite{LZ16}). Once such choice is to choose $\psi$ satisfying the conditions  listed in \cite[Theorem C]{LZ16}.  Also assume the vanishing of certain Galois cohomology groups as in \textbf{(inv)} (see after definition \ref{def:imprimitive}). We prove the following result on the parity of $\lambda$-invariance (cf. Theorem \ref{Mainthm}).
\begin{itemize}
    \item Suppose that the $\mu$-invariant of $\Sel_{p^\infty}(\bA_{f_1,\psi}/\Q_\infty)$ vanishes. Assume that for all primes $\ell \mid N_i, a_\ell(f_i) \not\equiv  0 \pmod{\mathfrak{P}}$. 
    
    Then there exists some explicitly computable finite sets $\mathcal{S}_{f_1,\psi}$ and $\mathcal{S}_{f_2,\psi}$  such that $$\lambda(\Sel_{p^\infty}(\bA_{f_1, \psi}/\Q_\infty))+|\mathcal{S}_{f_1,\psi}|\equiv \lambda(\Sel_{p^\infty}(\bA_{f_2, \psi}/\Q_\infty))+|\mathcal{S}_{f_2,\psi}| \pmod{2}.$$
\end{itemize}

Note that we have the additional restriction  $a_\ell(f_i) \not\equiv  0 \pmod{\mathfrak{P}}$ for all primes $\ell \mid N_i$. This is a condition under which we know explicitly the form of the $p$-adic Galois representation attached to the modular form $f_i$ by Deligne restricted to the  decomposition subgroup $G_{\Q_\ell}$. From this, we could compute the associated symmetric square representation restricted to $G_{\Q_\ell}$ (see Lemma \ref{lem: ii} and Lemma \ref{lem: iii}). We don't know how to remove this assumption.   

The analogue of \eqref{Green} in our case will follow if the representations $\bV_{f_1,\psi}$ and $\bV_{f_2,\psi}$ are self dual and then we obtain a analogue of \eqref{Shekhar}.

Suppose now that $f_i$ is non-ordinary at $\mathfrak{P}$ and $a_p(f_i)=0$.  Let $\mathcal{S}$ denote the set of pairs $\{(+,-),(+,\bullet), (-,\bullet)\}.$ For $\fS = (\clubsuit, \spadesuit) \in \fS$, and $\psi$ satisfying the conditions mentioned in \cite[page 3]{BLV}, Büyükboduk--Lei--Venkat defined three signed Selmer groups $\Sel_{\fS}(\bA_{f_i, \psi}/\Q(\mu_{p^\infty}))$ which are conjecturally cotorsion.   Below is the summary of our results.
\begin{itemize}
    \item  The Selmer group $\Sel_{\fS}(\bA_{f_i, \psi}/\Q_\infty)$ contains no proper $\Lambda$-submodule of finite index (Theorem \ref{prop:finite}).
    \item The $\mu$-invariant of $\Sel_{\fS}(\bA_{f_1, \psi}/\Q_\infty)$ vanishes if and only if the $\mu$-invariant of $\Sel_{\fS}(\bA_{f_2, \psi}/\Q_\infty)$ vanishes. When these $\mu$-invariants are trivial, then the imprimitive signed $\lambda$-invariants of $\Sel^{\Sigma_0}_{\fS}(\bA_{f_1, \psi}/\Q_\infty)$ and $\Sel^{\Sigma_0}_{\fS}(\bA_{f_2, \psi}/\Q_\infty)$ coincide
(Theorem \ref{prop: lambda}).
\item  Assume that for all primes $\ell \mid N_i, a_\ell(f_i) \not\equiv  0 \pmod{\mathfrak{P}}$. Also assume the vanishing of the $\mu$-invariants of $\Sel_{\fS}(\bA_{f_1, \psi}/\Q_\infty)$ and $\Sel_{\fS}(\bA_{f_2, \psi}/\Q_\infty)$. Then we have the congruence
    $$\lambda(\Sel_{\fS}(\bA_{f_1, \psi}/\Q_\infty)) +|\mathcal{S}_{f_1,\psi}|\equiv \lambda(\Sel_{\fS}(\bA_{f_2, \psi}/\Q_\infty))+|\mathcal{S}_{f_2,\psi}| \pmod{2}$$
    where $\mathcal{S}_{f_i,\psi}$ is the \textit{same} set of primes as in the ordinary case above (see Theorem \ref{Mainthm2}).
\end{itemize}
The main inputs in this article are the computations done to make the set $\mathcal{S}_{f_i,\psi}$ as explicit as possible (i.e. Lemma \ref{lem:noN}, Lemma \ref{lem: ii} and Lemma \ref{lem: iii}). In the non-ordinary setting, our main input is to use the local condition at $p$ defining these signed Selmer groups over $\Q(\mu_{p^\infty})$ in order to define the signed Selmer groups over $\Q_\infty$ and the finite layers $\Q_{(n)}$ in such a way that the usual control theorem holds (cf. Lemma \ref{lem1}). Note that the local condition at $p$ over $\Q_{(n)}$ is defined here using the local condition at $\Q(\mu_{p^\infty})$ via descending. This is unlike Kobayashi's approach \cite{kobayashi03} where the local condition at the finite layers is given first and then a direct limit was taken to define the signed Selmer group at $\Q_\infty$ resulting in a more difficult control theorem \cite[Theorem 9.3]{kobayashi03}. These two approaches give, in general, two different Selmer groups at the layer $\Q_{(n)}$. Our approach solves the purpose we are interested in.
Further we take certain cyclotomic twists of signed Selmer groups such that some global to local map defining appropriate Selmer condition becomes surjective (cf. Lemma \ref{lem: first} and Lemma \ref{lemma:second}). Taking such twists is a crucial part of the argument, without which the arguments fails. Finally, we analyze the Pontryagin dual of the local condition at $p$ defining the signed Selmer structures and prove a congruence result (Proposition \ref{prop:congprop}). Such an analysis of local condition at $p$ for signed Selmer groups is also need in the proof of Lemma \ref{lemma:second}. All these combined efforts finally help  to prove our main theorems.

\section*{Acknowledgement}
We would like to thank Jeffrey Hatley, Antonio Lei, Aprameyo Pal and Dipendra Prasad for numerous discussions and helpful comments. We gratefully acknowledge support from the Inspire Research Grant, DST, Govt. of India. 
\section{The symmetric square representation}\label{basics}
Throughout we fix an embedding $\iota_\infty$ of a fixed algebraic closure $\overline{\Q}$ of $\Q$ into $\C$ and also an embedding $\iota_\ell$ of $\overline{\Q}$ into a fixed algebraic closure $\overline{\Q}_{\ell}$ for every prime $\ell$. Let $f=\sum a_n(f)q^n$ be a normalized new cuspidal eigenform of even weight $k\geq 2$, level $N$, nebentypus $\varepsilon$, with coefficients in a number field $L \subset \mathbb{C}$. Assume that $f$ is not of CM type. Let $p \geq 5$ be a prime such that $p \nmid N$ and let $\mathfrak{P}$
be a prime of the field $L$ above $p$. We assume that $f$ is ordinary at $\mathfrak{P}$ (i.e. $v_{\mathfrak{P}}(a_p(f))=0$). Let $\alpha_p(f)$ be the unique root of the Hecke polynomial at $p$ that lies in $ \mathcal{O}^\times_{L,\mathfrak{P}}.$ Let $\omega_p: G_\Q \rightarrow \Z_p^\times$ be the $p$-adic cyclotomic character and let $\psi$ be a Dirichlet character of conductor $N_\psi$ coprime to $p$. Enlarging $L$ if necessary, we assume that $\psi$ takes values in $L^\times$. 
\begin{theorem}[Eichler, Shimura, Deligne, Mazur-Wiles, Wiles, etc.] \label{thm:modular}
There exists a Galois representation $\rho_f:G_\Q \longrightarrow GL_2(L_{\mathfrak{P}})$ such that 
\begin{enumerate}
    \item For all primes $\ell \nmid Np$, $\rho_f$ is unramified with the characteristic polynomial of the (arithmetic) Frobenius is given by 
    $\mathrm{trace}(\rho_f(\Frob_\ell))=a_\ell(f),$ and $\det(\rho_f(\Frob_\ell))=\varepsilon(\ell)\omega_p(\Frob_\ell)^{k-1}=\varepsilon(\ell)\ell^{k-1}.$ It follows by the Chebotarev Density Theorem that $\det(\rho_f)=\varepsilon\omega_p^{k-1}.$
    \item Let $G_p$ be the decomposition subgroup of $G_\Q$ at $p$. Then,
    $$\rho_f|_{G_p} \sim 
\begin{pmatrix}
\lambda_f^{-1}\varepsilon \omega_p^{k-1} & * \\
~ & \lambda_f
\end{pmatrix}$$
where $\lambda_f$ is the unramified character such that $\lambda_f(\Frob_p)=\alpha_p(f).$
\end{enumerate}
Let $V_f$ denote the representation space of $\rho_f$. Since $G_\Q$ is compact, choose an $\mathcal{O}_{L_\mathfrak{P}}$- lattice $T_f$ which is invariant under $\rho_f$. Let $$\tilde{\rho}_f: G_\Q \longrightarrow GL_2(\frac{\mathcal{O}_{L_\mathfrak{P}}}{\pi_L})$$ be the residual representation attached to $\rho_f$.
\end{theorem}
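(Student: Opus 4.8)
\emph{Proof sketch (plan).} The statement is classical, so the plan is to recall the standard construction and to point to the structural inputs rather than reprove them from scratch. First I would construct $\rho_f$. For weight $k=2$, one realizes $V_f$ as the $\mathfrak{P}$-adic Tate module (tensored with $L_{\mathfrak{P}}$) of the $f$-isotypic quotient of the modular Jacobian $J_1(N)$; this is the Eichler--Shimura construction. For general even $k\geq 2$ one follows Deligne: take the Kuga--Sato variety $\mathcal{E}^{k-2}\to X_1(N)$ (the $(k-2)$-fold fibre product of the universal elliptic curve, suitably compactified) and let $V_f$ be the $f$-isotypic component of $H^{k-1}_{\et}(\mathcal{E}^{k-2}_{\Qbar},\Q_p)$, equivalently of $H^1_{\et}(X_1(N)_{\Qbar},\Sym^{k-2}\!\mathcal{F})$ for the standard rank-$2$ local system $\mathcal{F}$. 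Continuity of $\rho_f$ and two-dimensionality of $V_f$ over $L_{\mathfrak{P}}$ follow from the Hecke-module structure of this cohomology together with the multiplicity-one theorem for newforms.

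Next I would establish (1). Since $X_1(N)$ and the Kuga--Sato variety have good reduction away from $N$, smooth and proper base change shows $\rho_f$ is unramified outside $Np$. The key identity is the Eichler--Shimura congruence relation, which expresses the Hecke operator $T_\ell$ modulo $\ell$ in terms of the Frobenius and Verschiebung correspondences; translated into the Galois action it yields that $\rho_f(\Frob_\ell)$ has characteristic polynomial $X^2-a_\ell(f)X+\varepsilon(\ell)\ell^{k-1}$ for every $\ell\nmid Np$ (Deligne's bound on the roots being a consistency check rather than a logical input here). Applying the Chebotarev density theorem to the continuous character $\det\rho_f$, which agrees with $\varepsilon\omega_p^{k-1}$ on all $\Frob_\ell$, forces $\det\rho_f=\varepsilon\omega_p^{k-1}$ globally.

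Then I would prove (2), the ordinary shape at $p$. Because $v_{\mathfrak{P}}(a_p(f))=0$, the Hecke polynomial $X^2-a_p(f)X+\varepsilon(p)p^{k-1}$ at $p$ has a unique root $\alpha_p(f)$ in $\mathcal{O}^\times_{L,\mathfrak{P}}$. One shows $\rho_f|_{G_p}$ is reducible, with an \emph{unramified} quotient character $\lambda_f$ determined by $\lambda_f(\Frob_p)=\alpha_p(f)$; the complementary sub-line then carries $\det\rho_f\cdot\lambda_f^{-1}=\lambda_f^{-1}\varepsilon\omega_p^{k-1}$, which is exactly the displayed matrix form. This is the theorem of Mazur--Wiles and Wiles. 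Two standard routes are available: (a) Hida theory, where $f$ is interpolated in an ordinary $\Lambda$-adic family whose associated Galois representation is visibly reducible at $p$ (the reducibility being seen through weight-one or ordinary CM specializations), so the $G_p$-stable line specializes; or (b) $p$-adic Hodge theory, where ordinarity of $a_p(f)$ translates into the (potentially) semistable Dieudonné module of $\rho_f|_{G_p}$ being ordinary, and Fontaine's theory produces the $G_p$-stable line with the prescribed unramified quotient. Finally, since $G_\Q$ is compact and $\rho_f$ continuous, its image lies in a conjugate of $GL_2(\mathcal{O}_{L_{\mathfrak{P}}})$, so an invariant lattice $T_f$ exists, and $\tilde\rho_f$ is defined by reduction modulo $\pi_L$.

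The genuinely hard input is part (2): the existence of $\rho_f$ with the correct Frobenius data is by now textbook (Deligne, Eichler--Shimura), whereas the ordinary triangularization at $p$ rests on the Mazur--Wiles/Wiles theorem (equivalently on Hida's theory of ordinary families), and this is the step I expect to be the main obstacle to a self-contained treatment; in practice one simply invokes it.
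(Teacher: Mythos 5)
The paper states this as a classical theorem and gives no proof, simply attributing it to Eichler, Shimura, Deligne, Mazur--Wiles, Wiles; your sketch accurately recounts the standard constructions and inputs (Eichler--Shimura/Kuga--Sato for the existence, the congruence relation plus Chebotarev for part (1), and Mazur--Wiles/Wiles for the ordinary filtration in part (2)). Your account is consistent with how the theorem is used later in the paper, so there is nothing to correct.
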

Throughout we assume that $\tilde{\rho}_f$ is absolutely irreducible so that the choice of the Galois stable lattice $T_f$ is unique.  By part (2) of Theorem \ref{thm:modular}, there exists a $G_p$-stable two step filtration 
$$V_f=\Fil^0V_f \supset \Fil^1V_f \supset 0=\Fil^2V_f$$
such that the action of $G_p$ on the graded pieces $\Gr^iV_f:=\Fil^iV_f/\Fil^{i+1}V_f$
is given as follows. The $G_p$-action of $\Gr^1V_f$ (resp. $\Gr^0V_f$) is given via the character $\lambda_f^{-1}\varepsilon \omega_p^{k-1}$ (resp. $\lambda_f$). Hence the action of $G_p$ on $\Gr^0V_f$ is unramified.

Now consider a basis $v_1$ of $\Fil^1V_f$ and expand it to a basis  $\{v_1,v_2\}$ of $V_f$. Let $\bV_f$ be the symmetric square representation associate to $V_f$. A basis for $\bV_f$ is given by $\{w_{i,j} \mid 1 \leq i \leq j \leq 2\}$ where $w_{i,j}=v_i \otimes v_j + v_j \otimes v_i$. Let $\Fil^2\bV_f=\mathrm{span}\{w_{1,1}\}$ and $\Fil^1\bV_f=\mathrm{span}\{w_{1,1},w_{1,2}\}$. Then $\bV_f$ has a $3$-step $G_p$-stable filtration 
$$\bV_f=\Fil^0\bV_f \supset \Fil^1\bV_f \supset \Fil^2\bV_f \supset 0=\Fil^3\bV_f$$
such that the action of $G_p$ on the $1$-dimensional graded pieces $\Gr^2 \bV_f, \Gr^1 \bV_f$ and $\Gr^0 \bV_f$ is given by the characters $(\lambda_f^{-1}\varepsilon \omega_p^{k-1})^2,\varepsilon \omega_p^{k-1}$ and $\lambda_f^2$ respectively. 

\subsection{The Greenberg Selmer group}\label{sec:notation}
Let $\bA_f:=\bV_f/\bT_f $ where $\bT_f=\mathrm{Sym}^2T_f$. We write $\bV_{f, \psi}$ for the twisted representation $\bV_f \otimes \psi$ with lattice $\bT_{f, \psi}$ and we denote the corresponding $p$-divisible Galois module as $\bA_{f, \psi}$.
Let $\Fil^1 \bA_{f, \psi}$ be the image of $\Fil^1 \bV_{f, \psi}$ under the canonical map $\bV_{f, \psi} \rightarrow \bA_{f, \psi}$.
Let $\Sigma$ be  a finite set of places of $\Q$ that contains $p$, primes that divide the level $N$, primes that divide the conductor $N_\psi$ and  $\infty$. We write $\Q_\Sigma$ for the maximal extension of $\Q$ unramified outside $\Sigma$. Let $\Q_\infty$ be the cyclotomic $\Z_p$-extension of $\Q$ with Galois group $\Gamma$ and Iwasawa algebra $\Lambda$. 
The $p$-primary Greenberg Selmer group 
$\Sel_{p^\infty}(\bA_{f, \psi}/\Q_\infty)$ is defined as 
$$\Sel_{p^\infty}(\bA_{f, \psi}/\Q_\infty):=\ker \Big(H^1(\Q_{\Sigma}/\Q,\bA_{f, \psi}) \xrightarrow{\lambda_{f, \psi}} \prod_{\ell \in \Sigma}\mathcal{H}_{\ell}(\Q_\infty, \bA_{f, \psi})\Big)$$
where $\mathcal{H}_{\ell}(\Q_\infty, \bA_{f, \psi})$ is defined as follows. If $\ell \neq p$, 
$$\mathcal{H}_{\ell}(\Q_\infty, \bA_{f, \psi}):= \prod_{\eta \mid \ell} H^1(\Q_{\infty, \eta}, \bA_{f, \psi})$$
where the product is over the finite set of primes $\eta$ of $\Q_\infty$ lying over $\ell$. Let $\eta_p$ be the unique prime of $\Q_\infty$ lying above $p$ and $I_{\eta_p}$ be the inertia subgroup at $\eta_p$. Then $$\mathcal{H}_p(\Q_\infty, \bA_{f, \psi}):=H^1(\Q_{\infty, \eta_p}, \bA_{f, \psi})/\mathcal{L}_{\eta_p}$$ with 
$$\mathcal{L}_{\eta_p}=\ker\Big(H^1(\Q_{\infty,\eta_p}, \bA_{f, \psi}) \longrightarrow H^1(I_{\eta_p}, \bA_{f, \psi}/\Fil^1 \bA_{f, \psi})\Big).$$
We make the following hypothesis throughout the article. 
\\

\noindent \textbf{(Tor)} $\Sel_{p^\infty}(\bA_{f, \psi}/\Q_\infty)$ is a cotorsion $\Lambda$-module.
\vspace{.2cm}

\begin{remark}
    Under various strict assumptions on the character $\psi$ listed in \cite[Theorem C]{LZ16}, \textbf{(Tor)} if known to be true by the works of Loeffler--Zerbes \cite{LZ16}
\end{remark}
\begin{definition} \label{def:imprimitive}
    Let $\Sigma_0=\Sigma \setminus \{p, \infty\}$. The $\Sigma_0$-imprimitive Selmer group  is defined as 
    $$\Sel_{p^\infty}^{\Sigma_0}(\bA_{f,\psi}/\Q_\infty)=\ker \Big(H^1(\Q_{\Sigma}/\Q,\bA_{f, \psi}) \xrightarrow{\lambda_{f, \psi}^{\Sigma_0}} \prod_{\ell \in \Sigma\setminus \Sigma_0}\mathcal{H}_{\ell}(\Q_\infty, \bA_{f, \psi})\Big)$$
\end{definition}
Let $(\bT_{f,\psi})^*:=\Hom(\bT_{f,\psi},\mu_{p^\infty}).$
We make the following assumptions. 
\begin{enumerate}[(i)]
    \item $\psi$ is even. 
    \item \textbf{(inv)} The Galois cohomology groups $H^0(\Q_p,\bA_{f,\psi})$ and $H^0(\Q_p,(\bT_{f,\psi})^*)$ are trivial.
\end{enumerate}
Under these assumptions, the localization map $\lambda_{f, \psi}$ (and hence also ${\lambda_{f, \psi}^{\Sigma_0}}$) is surjective (cf. \cite[Proposition 3.3]{RaySujathaVatsal}). It follows that 
\begin{equation}\label{eq:Sel1}
   \Sel_{p^\infty}^{\Sigma_0}(\bA_{f,\psi}/\Q_\infty)/ \Sel_{p^\infty}(\bA_{f,\psi}/\Q_\infty)\cong \prod_{\ell \in \Sigma_0}\mathcal{H}_{\ell}(\Q_\infty, \bA_{f, \psi}).
\end{equation}
The following result is well-known (cf. \cite[Lemma 3.5]{RaySujathaVatsal}).
\begin{lemma}\label{eq:Lemma1}
    If $ \ell \neq p$, $\mathcal{H}_{\ell}(\Q_\infty, \bA_{f, \psi})$ is cofinitely generated and cotorsion $\Lambda$-module with trivial $\mu$-invariant.
\end{lemma}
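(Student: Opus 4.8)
The result is classical (it is already cited above from \cite[Lemma 3.5]{RaySujathaVatsal}), so the plan is mostly a matter of assembling standard facts about local Galois cohomology. First I would record the global-to-local geometry: since $\ell\neq p$, the prime $\ell$ is unramified in $\Q_\infty/\Q$, and its Frobenius has infinite order in $\Gamma\cong\Z_p$ (the image of $\ell$ in $1+p\Z_p$ under the cyclotomic character is $\neq 1$ for any rational prime $\ell\geq 2$), so the decomposition group $\Gamma_\ell\subseteq\Gamma$ is open; hence there are only finitely many primes $\eta$ of $\Q_\infty$ above $\ell$, and each completion $\Q_{\infty,\eta}/\Q_\ell$ is the unramified $\Z_p$-extension. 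Since $\mathcal{H}_\ell(\Q_\infty,\bA_{f,\psi})=\bigoplus_{\eta\mid\ell}H^1(\Q_{\infty,\eta},\bA_{f,\psi})$ as $\Lambda$-modules, it suffices to prove that each $H^1(\Q_{\infty,\eta},\bA_{f,\psi})$ is \emph{cofinitely generated as a $\Z_p$-module}: a discrete $\Lambda$-module whose Pontryagin dual is finitely generated over $\Z_p$ is automatically cofinitely generated and cotorsion over $\Lambda$ and has vanishing $\mu$-invariant, because its dual then has no $\Lambda/p^n$ summand.

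To bound $H^1(\Q_{\infty,\eta},\bA_{f,\psi})$ I would run inflation--restriction along inertia. Let $G_\eta$ be the absolute Galois group of $\Q_{\infty,\eta}$ and $I_\eta\subseteq G_\eta$ its inertia subgroup. Because $\Q_{\infty,\eta}/\Q_\ell$ is unramified, $I_\eta$ is the inertia subgroup $I_\ell$ at $\ell$, and $G_\eta/I_\eta$ is the absolute Galois group of the residue field of $\Q_{\infty,\eta}$, i.e. of the $\Z_p$-extension of $\mathbb{F}_\ell$, which is $\prod_{q\neq p}\Z_q$. This profinite group has order prime to $p$, so its cohomology with coefficients in the $p$-primary module $\bA_{f,\psi}$ vanishes in all positive degrees; the inflation--restriction sequence therefore gives an isomorphism $H^1(\Q_{\infty,\eta},\bA_{f,\psi})\cong H^1(I_\ell,\bA_{f,\psi})^{G_\eta/I_\eta}$.

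It then remains to see that $H^1(I_\ell,\bA_{f,\psi})$ is a subquotient of $\bA_{f,\psi}$, hence cofinitely generated over $\Z_p$. Since $\ell\neq p$, the wild inertia $I_\ell^{\mathrm{wild}}$ is a pro-$\ell$ group and contributes nothing to positive-degree cohomology of the $p$-primary module $\bA_{f,\psi}$, so Hochschild--Serre for $1\to I_\ell^{\mathrm{wild}}\to I_\ell\to I_\ell^{\mathrm{tame}}\to 1$ gives $H^1(I_\ell,\bA_{f,\psi})\cong H^1(I_\ell^{\mathrm{tame}},\bA_{f,\psi}^{I_\ell^{\mathrm{wild}}})$; writing $I_\ell^{\mathrm{tame}}\cong\prod_{q\neq\ell}\Z_q(1)$ as $\Z_p(1)\times J$ with $J$ of order prime to $p$, the same vanishing reduces this to $H^1(\Z_p(1),M)=M/(\sigma-1)M$, where $M=(\bA_{f,\psi}^{I_\ell^{\mathrm{wild}}})^{J}$ is a submodule of $\bA_{f,\psi}$ and $\sigma$ is a topological generator of $\Z_p(1)$. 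Taking $G_\eta/I_\eta$-invariants keeps the module cofinitely generated over $\Z_p$ (of corank at most $\mathrm{corank}_{\Z_p}\bA_{f,\psi}$), completing the argument. I do not expect a genuine obstacle: the only points needing care are the local inputs just invoked---that $\Q_{\infty,\eta}/\Q_\ell$ is unramified, that profinite groups of order prime to $p$ have no higher cohomology on $p$-primary modules, and that wild inertia at $\ell$ is pro-$\ell$---together with the observation that nothing in the argument is disturbed by ramification of $\bA_{f,\psi}$ at the bad primes $\ell\mid N N_\psi$.
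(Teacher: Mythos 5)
Your argument is correct and is exactly the standard computation (Greenberg's analysis of the local terms at $\ell\neq p$, as in Greenberg--Vatsal) that underlies \cite[Lemma 3.5]{RaySujathaVatsal}, which is all the paper itself offers for this lemma. The key facts you invoke --- finite decomposition and unramifiedness of $\ell$ in $\Q_\infty/\Q$, vanishing of higher cohomology of prime-to-$p$ profinite groups on $p$-primary modules, and the pro-$\ell$ nature of wild inertia --- are all applied correctly, so there is nothing to add.
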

Let $\lambda^{\Sigma_0}(\bA_{f,\psi}/\Q_\infty)$  and $\lambda(\bA_{f,\psi}/\Q_\infty)$ be the $\lambda$-invariants for the Selmer groups 
$\Sel_{p^\infty}^{\Sigma_0}(\bA_{f,\psi}/\Q_\infty)$ and $\Sel_{p^\infty}(\bA_{f,\psi}/\Q_\infty)$ respectively. It follows that 
\begin{equation}\label{eq: lambda}
    \lambda^{\Sigma_0}(\bA_{f,\psi}/\Q_\infty)=\lambda(\bA_{f,\psi}/\Q_\infty) +\sum_{\ell \in \Sigma_0} \delta_\ell(\bA_{f,\psi}/\Q_\infty)
\end{equation}
where the $\lambda$-invariant of $\mathcal{H}_{\ell}(\Q_\infty, \bA_{f, \psi})$ is given by $\delta_\ell(\bA_{f,\psi}/\Q_\infty):=\sum_{\eta \mid \ell} \tau_{\eta}(\bA_{f,\psi}/\Q_\infty)$.

\subsection{Computing the parity of $\delta_\ell(\bA_{f,\psi}/\Q_\infty)$} For a prime $\ell \in \Sigma_0$, let $\Frob_\ell$ denote the arithmetic Frobenius automorphism in $\Gal(\Q_\ell^{\mathrm{unr}}/\Q_\ell)$. Let $I_\ell$ be the inertia subgroup $ \Gal(\overline{\Q}_l/\Q_\ell^{\mathrm{unr}})$ of $G_{\Q_\ell}$, $(\bV_{f , \psi})_{I_\ell}$ be the maximal quotient of $\bV_{f , \psi}$ on which $I_\ell$ acts trivially. Let $k_L$ be the residue field of $L$ and let $x \mapsto \tilde{x}$ be the reduction modulo $\mathfrak{P}$ map from $\mathcal{O}_L$ to $k_L$. The following proposition of Greenberg--Vatsal explains how to compute $\tau_{\eta}(\bA_{f,\psi}/\Q_\infty)$ (cf. \cite[Proposition 2.4]{GreenbergVatsal}).
\begin{proposition}
 Let $\ell \in \Sigma_0$ and write 
 $$P_{\ell,f}(X)=\det(1-\Frob_{\ell}X|_{(\bV_{f , \psi})_{I_\ell}}) \in \mathcal{O}_L[X].$$
 Let $d_{\ell,f}$ denote the multiplicity of $X=\tilde{\ell}^{-1}$ as a root of $\tilde{P}_{\ell,f} \in k_L[X].$ Then for each prime $\eta \mid \ell$, we have $\tau_{\eta}(\bA_{f,\psi}/\Q_\infty)=d_{\ell,f}$
\end{proposition}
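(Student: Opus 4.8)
The plan is to reduce the computation of $\tau_\eta(\bA_{f,\psi}/\Q_\infty)$ to the order of vanishing of a characteristic power series, and then to identify that order with $d_{\ell,f}$ by a direct cohomological calculation over the unramified $\Z_p$-extension. First I would recall that for $\ell \neq p$ the module $\mathcal{H}_\ell(\Q_\infty,\bA_{f,\psi}) = \prod_{\eta\mid\ell} H^1(\Q_{\infty,\eta},\bA_{f,\psi})$ is cofinitely generated and cotorsion over $\Lambda$ with trivial $\mu$-invariant (this is Lemma \ref{eq:Lemma1}), so each local factor $H^1(\Q_{\infty,\eta},\bA_{f,\psi})$ is itself a cotorsion $\Lambda$-module and its $\lambda$-invariant $\tau_\eta$ equals the $\Z_p$-corank of $H^1(\Q_{\infty,\eta},\bA_{f,\psi})$. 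Since $\ell\neq p$, the local extension $\Q_{\infty,\eta}/\Q_\ell$ is the unramified $\Z_p$-extension of $\Q_\ell$ (its completion), with Galois group $\Gamma_\eta \cong \Z_p$ topologically generated by a Frobenius lift; write $\gamma_\eta$ for this generator. Because $\bA_{f,\psi}$ is a cofinitely generated $\Z_p$-module and the residue characteristic is prime to $p$, the cohomology $H^1(I_\eta,\bA_{f,\psi})$ over the inertia part is controlled, and a standard dévissage (local Euler characteristic / the fact that $H^i$ vanishes for $i\geq 2$ and $H^0,H^1$ have equal coranks) shows $H^1(\Q_{\infty,\eta},\bA_{f,\psi}) = (\bA_{f,\psi})_{I_\eta}/(\gamma_\eta-1)$ up to a finite group, so its $\Z_p$-corank is the $\Z_p$-corank of the $(\gamma_\eta-1)$-coinvariants of $(\bA_{f,\psi})_{I_\eta}$.

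Next I would pass from $\bA_{f,\psi}$ back to $\bV_{f,\psi}$: since $(\bA_{f,\psi})_{I_\eta}$ is the divisible module $\bV_{f,\psi}/\bT_{f,\psi}$ after taking $I_\ell$-coinvariants, its $\Z_p$-corank agrees with the $L_\mathfrak{P}$-dimension of $(\bV_{f,\psi})_{I_\ell}$, and the $\Z_p$-corank of the $(\gamma_\eta-1)$-coinvariants of $(\bA_{f,\psi})_{I_\eta}$ equals the dimension of the generalized eigenspace of $\Frob_\ell$ acting on $(\bV_{f,\psi})_{I_\ell}$ for the eigenvalue $1$ — but one must be careful about the normalization. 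Working through the cyclotomic twist in the definition of $\Q_{\infty,\eta}$: the Frobenius acting on cohomology over $\Q_{\infty,\eta}$ is twisted by the cyclotomic character, which over the unramified $\Z_p$-extension contributes the factor $\tilde\ell$ when one reduces mod $\mathfrak{P}$ (this is exactly why $X = \tilde\ell^{-1}$, rather than $X=1$, is the relevant root). Concretely, $\tau_\eta$ counts, with multiplicity, the eigenvalues $\beta$ of $\Frob_\ell$ on $(\bV_{f,\psi})_{I_\ell}$ with $\tilde\beta = \tilde\ell$ in $k_L$, equivalently the multiplicity of $X=\tilde\ell^{-1}$ as a root of $\widetilde{P_{\ell,f}}(X) = \det(1-\widetilde{\Frob_\ell}X\mid (\bV_{f,\psi})_{I_\ell})$, which is precisely $d_{\ell,f}$.

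Finally I would assemble these steps, taking care that the "up to a finite group" errors do not affect the $\lambda$-invariant (they change only a finite, hence $\mu$- and $\lambda$-irrelevant, piece), and that the passage from the module-theoretic corank to the root-counting statement respects the reduction map $\mathcal{O}_L\to k_L$ (here it is essential that the eigenvalues of $\Frob_\ell$ — roots of $P_{\ell,f}$ — are $\mathfrak{P}$-integral, which holds since $\bT_{f,\psi}$ is a Galois-stable lattice, so their reductions make sense). The main obstacle I anticipate is the bookkeeping of the cyclotomic twist and the identification of the correct Frobenius normalization (arithmetic vs. geometric, and the factor $\tilde\ell$): getting the root to be $X=\tilde\ell^{-1}$ rather than $X=1$ or $X=\tilde\ell$ requires tracking carefully how $\Gamma_\eta$ sits inside $\Gamma$ and how the action of $\Frob_\ell$ on $H^1(\Q_{\infty,\eta},-)$ differs from its action on $\bV_{f,\psi}$ itself; everything else is a routine local Euler-characteristic computation over a $\Z_p$-extension of residue characteristic $\neq p$.
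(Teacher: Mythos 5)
Note first that the paper does not prove this proposition: it is quoted verbatim from Greenberg--Vatsal \cite[Prop.~2.4]{GreenbergVatsal}, so the comparison is really with their argument. Your outline arrives at the correct final statement, but the step that actually makes the proposition true is missing, and an intermediate identification is wrong. The formula $H^1(\Q_{\infty,\eta},\bA_{f,\psi})=(\bA_{f,\psi})_{I_\eta}/(\gamma_\eta-1)$ is not the right d\'evissage: $\gamma_\eta$ generates $\Gal(\Q_{\infty,\eta}/\Q_\ell)$, which is not the group whose cohomology computes $H^1(\Q_{\infty,\eta},-)$. The correct reduction is via inflation--restriction for $I_\ell\subset G_{\Q_{\infty,\eta}}$: since $\Gal(\Q_\ell^{\mathrm{unr}}/\Q_{\infty,\eta})\cong\prod_{q\neq p}\Z_q$ has trivial pro-$p$ part, one gets $H^1(\Q_{\infty,\eta},\bA_{f,\psi})\cong H^1(I_\ell,\bA_{f,\psi})^{\Gal(\Q_\ell^{\mathrm{unr}}/\Q_{\infty,\eta})}$, with $H^1(I_\ell,\bA_{f,\psi})\cong(\bA_{f,\psi})_{I_\ell}(-1)$ as Frobenius modules (this Tate twist is the sole source of the shift from the eigenvalue $1$ to $\ell$, i.e.\ from the root $1$ to $\tilde\ell^{-1}$).

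The genuine gap is the passage to the residual count. The fixed points (or coinvariants) of a \emph{single} automorphism $F$ on the divisible module $(\bV_{f,\psi})_{I_\ell}(-1)/(\text{lattice})$ have corank equal to $\dim\ker(F-1)$ on $(\bV_{f,\psi})_{I_\ell}(-1)$ --- an honest eigenspace in characteristic zero, neither a generalized eigenspace nor anything computed mod $\mathfrak{P}$. What produces $d_{\ell,f}$ is that invariants under $\prod_{q\neq p}\Z_q$ of a discrete $p$-primary module are the direct limit over $a$ of the $\Frob_\ell^{p^a}$-fixed points: on a rank-one piece where $\Frob_\ell$ acts on the lattice by a unit $\beta$, the $\Frob_\ell^{p^a}$-fixed points are killed by $\beta^{p^a}-1$, and $v_{\mathfrak{P}}(\beta^{p^a}-1)\to\infty$ precisely when $\tilde\beta=1$ (whereas it stays $0$ when $\tilde\beta\neq 1$, since the order of $\tilde\beta$ in $k_L^\times$ is prime to $p$); hence the limit has corank one exactly when the eigenvalue is congruent to $1$ mod $\mathfrak{P}$. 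This limit argument is simultaneously the reason the answer is a multiplicity of a root of the \emph{reduced} polynomial $\tilde P_{\ell,f}$ and the reason generalized eigenvalues are counted with multiplicity. Your appeal to ``the cyclotomic twist contributes the factor $\tilde\ell$ when one reduces mod $\mathfrak{P}$'' explains only the twist, not why one reduces mod $\mathfrak{P}$ at all; as written, your argument would yield the characteristic-zero eigenspace dimension, which is in general strictly smaller than $d_{\ell,f}$.
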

\begin{corollary}
    For each $\ell \in \Sigma_0$, we have $\delta_\ell(\bA_{f,\psi}/\Q_\infty)\equiv d_{\ell,f} \pmod 2.$
\end{corollary}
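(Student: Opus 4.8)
The plan is to deduce the Corollary from the preceding Proposition of Greenberg--Vatsal together with the elementary fact that the number of primes of $\Q_\infty$ above a fixed $\ell\neq p$ is odd.

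First I would recall that $\Gamma=\Gal(\Q_\infty/\Q)\cong\Z_p$, and that for $\ell\neq p$ the prime $\ell$ is unramified in $\Q_\infty/\Q$, so its decomposition group $\Gamma_\ell\subseteq\Gamma$ is the closed subgroup topologically generated by the Frobenius at $\ell$. Every nontrivial closed subgroup of $\Z_p$ has the form $p^{m}\Z_p$ for some $m\geq 0$, so the set of primes of $\Q_\infty$ lying over $\ell$ has cardinality $[\Gamma:\Gamma_\ell]$, which is either $1$ (when $\Gamma_\ell=\Gamma$) or $p^{m}$ for some $m\geq 1$. In every case this cardinality is a power of $p$, hence odd, because $p\geq 5$ is odd. (Equivalently, one may argue at finite layers: the number of primes above $\ell$ in $\Q_n$ is $p^{n}/f_n$, where $f_n$ is the residue degree of $\ell$ in $\Q_n/\Q$, again a power of $p$, and then pass to the limit.)

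Next, by the Proposition quoted above, for \emph{every} prime $\eta\mid\ell$ of $\Q_\infty$ one has $\tau_{\eta}(\bA_{f,\psi}/\Q_\infty)=d_{\ell,f}$, a value independent of the choice of $\eta$. Hence, using the definition $\delta_\ell(\bA_{f,\psi}/\Q_\infty)=\sum_{\eta\mid\ell}\tau_{\eta}(\bA_{f,\psi}/\Q_\infty)$ introduced after \eqref{eq: lambda},
$$\delta_\ell(\bA_{f,\psi}/\Q_\infty)=\#\{\eta\mid\ell\}\cdot d_{\ell,f}.$$
Since $\#\{\eta\mid\ell\}$ is odd, this yields $\delta_\ell(\bA_{f,\psi}/\Q_\infty)\equiv d_{\ell,f}\pmod 2$, as claimed.

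There is essentially no serious obstacle: the only ingredient beyond the Proposition is the classification of closed subgroups of $\Z_p$, and the hypothesis that $p$ is odd is used exactly to conclude that the index $[\Gamma:\Gamma_\ell]$ is odd. The point of isolating this Corollary is that it replaces the sum $\delta_\ell$ over the (possibly several) primes $\eta\mid\ell$ by the single, explicitly computable local quantity $d_{\ell,f}$ modulo $2$, which is what will be compared between $f_1$ and $f_2$ in the congruence arguments that follow.
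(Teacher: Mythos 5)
Your proof is correct and follows exactly the paper's argument: the number of primes of $\Q_\infty$ above $\ell$ equals $[\Gamma:\Gamma_\ell]$, a power of $p$ and hence odd, so $\delta_\ell(\bA_{f,\psi}/\Q_\infty)=[\Gamma:\Gamma_\ell]\,d_{\ell,f}\equiv d_{\ell,f}\pmod 2$. The extra detail you give on the classification of closed subgroups of $\Z_p$ is fine but not needed beyond what the paper already records.
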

\begin{proof}
    Let $s_\ell$ denote the number of primes $\eta$ of $\Q_\infty$ lying over $\ell$. That is, $s_\ell=[\Gamma:\Gamma_\ell]$ where $\Gamma_\ell$ denotes the decomposition subgroup of $\Gamma$ for any such $\eta$. It follows from \cite[page 37 and Proposition 2.4]{GreenbergVatsal} that $\delta_\ell(\bA_{f,\psi}/\Q_\infty)=s_\ell d_{\ell,f}$. Since $s_\ell$ is a power of $p$, it is necessarily odd and hence the result follows.
\end{proof}
Our next goal is to compute the parity of $d_{\ell,f}$ for each prime $\ell \in \Sigma_0$.
\begin{lemma}\label{lem:noN}
    If $ \ell \nmid N$, then $d_{\ell,f}$ is odd if and only if $\tilde{\psi}$ is unramified at $\ell$ and any of the following mutually disjoint conditions \eqref{CaseI}, \eqref{CaseII}, \eqref{CaseIII} given below  hold. 
\end{lemma}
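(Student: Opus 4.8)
The plan is to make the representation $\bV_{f,\psi}$ restricted to the decomposition group $G_{\Q_\ell}$ completely explicit when $\ell\nmid N$, and then read off the multiplicity $d_{\ell,f}$ of $X=\tilde\ell^{-1}$ as a root of the reduction of the Euler factor. Since $\ell\nmid Np$, the representation $\rho_f$ is unramified at $\ell$, and the arithmetic Frobenius $\Frob_\ell$ acts on $V_f$ with characteristic polynomial $X^2-a_\ell(f)X+\varepsilon(\ell)\ell^{k-1}$; write $\alpha_\ell,\beta_\ell$ for its two eigenvalues (in $\overline{L_\mathfrak{P}}$), so $\alpha_\ell\beta_\ell=\varepsilon(\ell)\ell^{k-1}$ and $\alpha_\ell+\beta_\ell=a_\ell(f)$. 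First I would note that $\Sym^2$ of an unramified two-dimensional representation is again unramified, with $\Frob_\ell$ acting on $\bV_f$ with eigenvalues $\alpha_\ell^2,\ \alpha_\ell\beta_\ell=\varepsilon(\ell)\ell^{k-1},\ \beta_\ell^2$. Twisting by $\psi$ multiplies these by $\psi(\Frob_\ell)$ when $\psi$ is unramified at $\ell$; when $\tilde\psi$ is ramified at $\ell$, the twisted representation $\bV_{f,\psi}$ has no nonzero $I_\ell$-coinvariants contributing a Frobenius eigenvalue equal to $\tilde\ell^{-1}$ modulo $\mathfrak{P}$ (I would check that $I_\ell$ acts through $\psi$, which is nontrivial on $I_\ell$, so $(\bV_{f,\psi})_{I_\ell}=0$ in that case, forcing $d_{\ell,f}=0$, hence even). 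This already gives the necessity of "$\tilde\psi$ unramified at $\ell$."

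Next, assuming $\tilde\psi$ unramified at $\ell$, I would compute $P_{\ell,f}(X)=\det(1-\Frob_\ell X\mid (\bV_{f,\psi})_{I_\ell})$ explicitly. Since everything is unramified, $(\bV_{f,\psi})_{I_\ell}=\bV_{f,\psi}$, and
$$P_{\ell,f}(X)=(1-\psi(\ell)\alpha_\ell^2 X)(1-\psi(\ell)\varepsilon(\ell)\ell^{k-1}X)(1-\psi(\ell)\beta_\ell^2 X).$$
Reducing modulo $\mathfrak{P}$ and asking for the multiplicity of $X=\tilde\ell^{-1}$ as a root of $\tilde P_{\ell,f}$ amounts to counting, with multiplicity, how many of the three quantities $\tilde\psi(\ell)\tilde\alpha_\ell^2$, $\tilde\psi(\ell)\tilde\varepsilon(\ell)\tilde\ell^{k-1}$, $\tilde\psi(\ell)\tilde\beta_\ell^2$ are congruent to $\tilde\ell$ modulo $\mathfrak{P}$. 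Here one must be slightly careful: $\tilde\alpha_\ell,\tilde\beta_\ell$ a priori live in an extension of $k_L$, but $\tilde P_{\ell,f}\in k_L[X]$ by hypothesis, so the count of roots equal to $\tilde\ell^{-1}$ is well-defined over $k_L$. I would then split into the three mutually disjoint cases \eqref{CaseI}, \eqref{CaseII}, \eqref{CaseIII} according to which of the three factors vanish at $\tilde\ell^{-1}$: the middle factor gives a linear condition $\tilde\psi(\ell)\tilde\varepsilon(\ell)\tilde\ell^{k-1}=\tilde\ell$, i.e. $\tilde\psi(\ell)\tilde\varepsilon(\ell)=\tilde\ell^{2-k}$; the outer factors give $\tilde\psi(\ell)\tilde\alpha_\ell^2=\tilde\ell$ or $\tilde\psi(\ell)\tilde\beta_\ell^2=\tilde\ell$. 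Because $\tilde\alpha_\ell^2\tilde\beta_\ell^2=\tilde\varepsilon(\ell)^2\tilde\ell^{2(k-1)}$, one can relate when both outer factors vanish to the middle one vanishing, organising the parity bookkeeping so that $d_{\ell,f}$ is odd exactly in the union of the three listed configurations (which presumably are: exactly the middle factor contributes an odd count; or exactly one outer factor; or a degenerate coincidence forcing an odd total). The precise phrasing of \eqref{CaseI}--\eqref{CaseIII} will be dictated by this case analysis, translating "$\tilde\alpha_\ell,\tilde\beta_\ell$ are roots of $X^2-\tilde a_\ell(f)X+\tilde\varepsilon(\ell)\tilde\ell^{k-1}$" into congruence conditions purely on $a_\ell(f)$, $\varepsilon(\ell)$, $\psi(\ell)$ and $\ell$ modulo $\mathfrak{P}$.

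The main obstacle I anticipate is handling the boundary cases where the roots $\tilde\alpha_\ell,\tilde\beta_\ell$ coincide, or where $\tilde\alpha_\ell^2\equiv\tilde\beta_\ell^2$ without $\tilde\alpha_\ell\equiv\tilde\beta_\ell$ (i.e. $\tilde\alpha_\ell\equiv-\tilde\beta_\ell$, forcing $\tilde a_\ell(f)\equiv 0$), since these change the multiplicity with which $\tilde\ell^{-1}$ occurs and must be carefully matched against the three stated disjoint cases; the fact that the statement restricts to $a_\ell(f)\not\equiv 0\pmod{\mathfrak{P}}$ for $\ell\mid N$ but here $\ell\nmid N$ means I do get the possibility $\tilde a_\ell(f)\equiv 0$ and must treat it. A secondary technical point is ensuring that the passage to $I_\ell$-coinvariants commutes with $\Sym^2$ and with the $\psi$-twist in the ramified-$\psi$ case — but since $\rho_f$ is genuinely unramified at $\ell$ this reduces to a clean computation with the single semisimple operator $\Frob_\ell$ and the character $\psi|_{I_\ell}$. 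Once the three cases are pinned down, the lemma follows by a direct inspection of when the total root multiplicity at $\tilde\ell^{-1}$ is odd.
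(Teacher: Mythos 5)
Your proposal is correct and follows essentially the same route as the paper: dispose of the ramified-$\tilde\psi$ case by showing $(\tilde{\bV}_{f,\psi})_{I_\ell}=0$, then in the unramified case factor the degree-three Euler polynomial into the linear factor with Frobenius eigenvalue $\psi(\ell)\varepsilon(\ell)\ell^{k-1}$ and the quadratic with roots $\psi(\ell)\alpha_\ell^2,\psi(\ell)\beta_\ell^2$, rewrite the latter via symmetric functions purely in terms of $a_\ell(f),\varepsilon(\ell),\psi(\ell),\ell$, and enumerate the configurations giving odd multiplicity ($d_{\ell,f}=1$ or $3$), including the double-root degeneracies you flag. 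This is exactly the paper's case analysis leading to conditions \eqref{CaseI}--\eqref{CaseIII}.
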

\begin{proof}
If $\ell \nmid N$ but $\tilde{\psi}$ is ramified at $\ell$ then 
$(\tilde{\bV}_{f, \psi})_{I_\ell} =0$. This can be deduced easily from \cite[last paragraph of page 255]{DiamondTaylor}.

Suppose $\ell \nmid N$ and $\tilde{\psi}$ is unramified at $\ell$, then $\tilde{\bV}_{f, \psi}$ is unramified at $\ell$ and hence $(\tilde{\bV}_{f, \psi})_{I_\ell} = \tilde{\bV}_{f, \psi}.$ It is well-known that (cf. \cite[Section 2.1]{LZ16})
    $$P_{\ell,f}(X)\equiv (1-\alpha_\ell^2\psi(\ell)X)(1-\beta_\ell^2\psi(\ell)X)(1-\alpha_\ell\beta_\ell \psi(\ell) X) \pmod{\mathfrak{P}}$$
    where $\alpha_\ell^{-1}$ and $\beta_\ell^{-1}$ are two roots of the Hecke polynomial $1-a_\ell(f)X +\varepsilon(l)\ell^{k-1}X^2.$
    Therefore, 
    \begin{align*}
       \tilde{P}_{\ell,f}(X)&=\Big(1-\psi(\ell)(\alpha_{\ell}^2+\beta_\ell^2)X+\psi(\ell)^2\alpha_\ell^2\beta_\ell^2X^2\Big)^{\sim} \Big(1-\varepsilon(\ell)\psi(\ell)\ell^{k-1}X\Big)^{\sim}\\
       &=\Big(1-\big(\psi(\ell)a_\ell(f)^2-2\psi(\ell)\varepsilon(\ell)\ell^{k-1}\big)X+\psi(\ell)^2\varepsilon(\ell)^2\ell^{2k-2}X^2\Big)^{\sim}\Big(1-\varepsilon(\ell)\psi(\ell)\ell^{k-1}X\Big)^{\sim}.
    \end{align*}
    Let $\tilde{g}(X)=\Big(1-\varepsilon(\ell)\psi(\ell)\ell^{k-1}X\Big)^{\sim}$ and $\tilde{h}(X)=\tilde{P}_{\ell,f}(X)/\tilde{g}(X)$.
    It follows that $d_{\ell,f}=1$ if and only if 
either of the following two cases hold.
\begin{enumerate}[(I)]
    \item $\tilde{\ell}^{-1}$ is a root of $\tilde{g}(X)$ and $\tilde{\ell}^{-1}$ is not a root of $\tilde{h}(X)$,
    \item $\tilde{\ell}^{-1}$ is a simple root of $\tilde{h}(X)$ and $\tilde{\ell}^{-1}$ is not a  root of $\tilde{g}(X).$
\end{enumerate}
\noindent{\textit{Case (I):}} $\tilde{\ell}^{-1}$ is a root of $\tilde{g}(X)$ if and only if 
\begin{equation}\label{eq:CaseIi}
  \varepsilon(\ell)\psi(\ell)\ell^{k-2}\equiv 1 \pmod{\mathfrak{P}}.
\end{equation}
 Also, $\tilde{\ell}^{-1}$ is not a root of $\tilde{h}(X)$ means $\tilde{h}(\tilde{\ell}^{-1}) \neq 0.$ Hence Case (I) holds if and only if 
\begin{equation}\label{CaseI}
 \text{equation } \eqref{eq:CaseIi}  \text{ holds and }  h(\ell^{-1}) \not\equiv  0 \pmod{\mathfrak{P}}.
\end{equation}

\noindent{\textit{Case (II):}} We first find equivalent conditions when $\tilde{\ell}^{-1}$ is a simple root of $\tilde{h}(X)$. Since the product of two roots of $h(X)$ is $\psi(\ell)^{-2}\varepsilon(\ell)^{-2}\ell^{2-2k}$, 

    $$\tilde{\ell}^{-1} \text{ is a  root if and only if } \Big(\psi(\ell)^{-2}\varepsilon(\ell)^{-2}\ell^{3-2k}\Big)^{\sim} \text{ is a root. }$$

Therefore, $\tilde{\ell}^{-1}$ is a root of $\tilde{h}(X)$ if and only if 
$$\ell^{-1} + \psi(\ell)^{-2}\varepsilon(\ell)^{-2}\ell^{3-2k} \equiv \big(\psi(\ell)a_\ell(f)^2-2\psi(\ell)\varepsilon(\ell)\ell^{k-1}\big)\big(\psi(\ell)^{-2}\varepsilon(\ell)^{-2}\ell^{2-2k}\big) \pmod{\mathfrak{P}}.$$ Simplifying, we get that $\tilde{\ell}^{-1}$ is a root of $\tilde{h}(X)$ if and only if  
\begin{equation}\label{eq: IIi}
   \psi(\ell)^{-1}a_\ell(f)^2\varepsilon(\ell)^{-2}\ell^{3-2k}-2\psi(\ell)^{-1}\varepsilon(\ell)^{-1}\ell^{2-k}-\psi(\ell)^{-2}\varepsilon(\ell)^{-2}\ell^{4-2k} \equiv 1 \pmod{\mathfrak{P}}.
\end{equation}
Hence, if $\tilde{l}^{-1}$ is a root of $\tilde{h}(X)$, it is a simple root  if and only if 
\begin{equation}\label{eq: IIii}
    \ell^{-1}\not\equiv \psi(\ell)^{-2}\varepsilon(\ell)^{-2}\ell^{3-2k} \pmod{\mathfrak{P}} \text{ i.e. } \psi(\ell)^{-2}\varepsilon(\ell)^{-2}\ell^{4-2k} \not\equiv 1 \pmod{\mathfrak{P}}.
\end{equation}
Therefore, Case (II) holds if and only if 
\begin{equation}\label{CaseII}
   \text{equations } \eqref{eq: IIi}  \text{  and } \eqref{eq: IIii} \text{ hold  and equation } \eqref{eq:CaseIi} \text{ does not hold.}
\end{equation}
This completes the case when $d_{\ell,f}=1$. Now, it is easy to see that $d_{\ell,f}=3$ if and only if 
\begin{equation}\label{CaseIII}
    \text{equations } \eqref{eq: IIi}  \text{  and } \eqref{eq:CaseIi} \text{ hold  and equation } \eqref{eq: IIii} \text{ does not hold.}
\end{equation}
\end{proof}

Now we deal with the cases when $\ell \mid N$ and we make the following hypothesis.

\vspace{.2cm}

\noindent{\textbf{(Hyp)}} For all primes $\ell \mid N$, $a_\ell(f) \not\equiv 0 \pmod{\mathfrak{P}}.$

\vspace{.2cm}

It is known that $a_\ell(f) \not\equiv 0 \pmod{\mathfrak{P}}$, if and only if one of the following holds (cf. \cite[page 16, Section 12, Remark II]{MTT}):
\begin{itemize}
    \item $\ell \mid \mid N$ and $\ell \nmid M$; or
    \item $\ord_\ell(N)=\ord_\ell(M)$.
\end{itemize}
Here $M$ is the conductor of the nebentypus $\varepsilon$.
\begin{lemma}\label{lem: ii}
Suppose $\ell \mid \mid N$ and $\ell \nmid M$. Then $d_{\ell,f}$ is odd if and only if $\tilde{\psi}$ is unramified at $\ell$ and $\ell \equiv a_\ell(f)^2\psi(\ell) \pmod{\mathfrak{P}}.$
\end{lemma}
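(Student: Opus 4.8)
The plan is to follow the same strategy as in the proof of Lemma \ref{lem:noN}: compute the local Euler factor $P_{\ell,f}(X) = \det(1 - \Frob_\ell X \mid (\bV_{f,\psi})_{I_\ell})$ reduced modulo $\mathfrak{P}$, and then determine when $\tilde\ell^{-1}$ occurs as a root with odd multiplicity. The essential new input is the shape of $\rho_f|_{G_{\Q_\ell}}$ when $\ell \mid\mid N$ and $\ell \nmid M$, which under hypothesis \textbf{(Hyp)} (i.e. $a_\ell(f) \not\equiv 0 \pmod{\mathfrak{P}}$) is the case of (twist-of-)Steinberg type: by the theory of Deligne (and the local Langlands dictionary, cf. the references used around \textbf{(Hyp)}), $\rho_f|_{G_{\Q_\ell}}$ is an extension of an unramified character $\mu_\ell$ by $\mu_\ell \omega_p$, where $\mu_\ell(\Frob_\ell) = a_\ell(f)$ (note $a_\ell(f)^2 = \ell^{k-2}$ in this case, but we keep $a_\ell(f)$ for clarity). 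First I would record this, then compute $\Sym^2$.

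Concretely, I would pick a basis $\{v_1, v_2\}$ of $V_f$ adapted to the filtration so that $I_\ell$ acts on $v_1$ by $\omega_p$ and trivially on the image of $v_2$ in the quotient; then on the symmetric square basis $\{w_{1,1}, w_{1,2}, w_{2,2}\}$ the inertia $I_\ell$ acts by $\omega_p^2, \omega_p, 1$ respectively, so that $(\bV_{f,\psi})_{I_\ell}$ is (after twisting by $\psi$) spanned by the image of $w_{2,2}$ provided $\tilde\psi$ is unramified at $\ell$ — and is zero if $\tilde\psi$ is ramified at $\ell$ (this last point is handled exactly as in Lemma \ref{lem:noN} using \cite{DiamondTaylor}). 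On that one-dimensional $I_\ell$-coinvariant space, $\Frob_\ell$ acts by $\mu_\ell(\Frob_\ell)^2 \psi(\ell) = a_\ell(f)^2 \psi(\ell)$, so
$$\tilde P_{\ell,f}(X) \equiv 1 - \widetilde{a_\ell(f)^2\psi(\ell)}\, X \pmod{\mathfrak{P}}.$$
Then $d_{\ell,f}$ is the multiplicity of $\tilde\ell^{-1}$ as a root, which (the polynomial being linear) is $1$ precisely when $\widetilde{a_\ell(f)^2\psi(\ell)} = \tilde\ell$, i.e. $a_\ell(f)^2\psi(\ell) \equiv \ell \pmod{\mathfrak{P}}$, and $0$ otherwise. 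Combined with the coranking observation above, this is exactly the claimed statement: $d_{\ell,f}$ is odd iff $\tilde\psi$ is unramified at $\ell$ and $\ell \equiv a_\ell(f)^2\psi(\ell) \pmod{\mathfrak{P}}$.

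The main obstacle — and the only genuinely nontrivial step — is justifying the precise form of $\rho_f|_{G_{\Q_\ell}}$ in the case $\ell\mid\mid N$, $\ell\nmid M$: namely that it is a ramified extension of an unramified character by that character times $\omega_p$ (Steinberg-type up to unramified twist), so that the $I_\ell$-action is unipotent with a single Jordan block and the coinvariants are one-dimensional. This is precisely where hypothesis \textbf{(Hyp)} is used, and it is the content the author flags (``a condition under which we know explicitly the form of the $p$-adic Galois representation attached to $f_i$ by Deligne restricted to $G_{\Q_\ell}$''); the citation to \cite{MTT} together with the local Langlands correspondence supplies it. Once that local description is in hand, the symmetric-square computation and the root-counting are entirely routine, parallel to Lemma \ref{lem:noN}. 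I would also remark that, as in the previous lemma, the number $s_\ell$ of primes of $\Q_\infty$ over $\ell$ is a power of $p$ hence odd, so the parity of $\delta_\ell$ agrees with that of $d_{\ell,f}$ and nothing extra is needed on that front.
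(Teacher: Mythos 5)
Your proposal is correct and follows essentially the same route as the paper: identify $\rho_f|_{G_{\Q_\ell}}$ as a ramified (Steinberg-type) extension of an unramified character $\tilde{\chi}$ with $\tilde{\chi}(\Frob_\ell)=\tilde{a}_\ell(f)$ by $\tilde{\chi}\tilde{\omega}_p$ (the paper quotes this from Hida's book and gets the ramifiedness of the extension from Hatley--Lei), compute $\Sym^2$, and observe that the inertia coinvariants are one-dimensional (zero if $\tilde{\psi}$ ramifies at $\ell$) with Frobenius acting by $a_\ell(f)^2\psi(\ell)$. One small wording issue: the one-dimensionality of $(\bV_{f,\psi})_{I_\ell}$ comes from the ramified unipotent part of the inertia action (a single $3\times 3$ Jordan block, using $2\neq 0$ in $k_L$), not from the diagonal characters $\omega_p^2,\omega_p,1$ of your adapted basis, which are all trivial on $I_\ell$ --- but you state the correct reason in your final paragraph, so the argument stands.
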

 \begin{proof}
In this case  \cite[Theorem 3.26, 3(b)]{HidabookModular} gives
$$\tilde{\rho}_f|_{G_\ell}\sim \begin{pmatrix}
\tilde{\omega_p}\tilde{\chi} & \tilde{D}\\
~ & \tilde{\chi}
\end{pmatrix},$$
where $\tilde{\chi}$ is unramified such that $\tilde{\chi}(\Frob_\ell)=\tilde{a}_\ell(f)$ . Since the residual representation attached to $(V_f)_{I_\ell}$ is one dimensional (cf. \cite[Proof of Lemma 5.4]{HatleyLei19}), the character $\tilde{D}$ must be ramified. Therefore,
$$\tilde{\bV}_f|_{G_\ell}\sim \begin{pmatrix}
\tilde{\omega_p}^2\tilde{\chi}^2 & \tilde{\omega_p}\tilde{\chi}\tilde{D} & \tilde{D}^2\\
~ & \tilde{\omega_p}\tilde{\chi}^2 & 2\tilde{\chi}\tilde{D}\\
~ & ~ & \tilde{\chi}^2
\end{pmatrix} 
\text{ and hence }
\tilde{\bV}_{f,\psi}|_{G_\ell}\sim \begin{pmatrix}
\tilde{\omega_p}^2\tilde{\chi}^2\tilde{\psi} & \tilde{\omega_p}\tilde{\chi}\tilde{D} & \tilde{D}^2\\
~ & \tilde{\omega_p}\tilde{\chi}^2\tilde{\psi} & 2\tilde{\chi}\tilde{D}\\
~ & ~ & \tilde{\chi}^2\tilde{\psi}
\end{pmatrix}.$$
If $\tilde{\psi}$ is unramified at $\ell$, then
the action of $I_\ell$ on on $\tilde{\bV}_{f,\psi}$ is via the matrix 
$\begin{pmatrix}
1 & \tilde{D} & \tilde{D}^2\\
~ & 1 & 2\tilde{D}\\
~ & ~ & 1
\end{pmatrix}$
and hence $(\tilde{\bV}_{f,\psi})_{I_\ell}$ is one dimensional and the action of $\Frob_\ell$ on this space is via $\tilde{\chi}^2\tilde{\psi}$. Thus $\tilde{P}_{\ell,f}=(1-\tilde{a}_\ell(f)^2\psi(\ell)X)$. It follows that $\tilde{\ell}^{-1}$ is a root of $\tilde{P}_{\ell,f}$ if and only if $\ell \equiv a_\ell(f)^2\psi(\ell) \pmod{\mathfrak{P}}.$

If $\tilde{\psi}$ is ramified at $\ell$, then
the action of $I_\ell$ on on $\tilde{\bV}_{f,\psi}$ is via the matrix 
$\begin{pmatrix}
\tilde{\psi} & \tilde{D} & \tilde{D}^2\\
~ & \tilde{\psi} & 2\tilde{D}\\
~ & ~ & \tilde{\psi}
\end{pmatrix}$
and in this case $(\tilde{\bV}_{f,\psi})_{I_\ell}=0.$
\end{proof}
Next we deal with the case $\ord_\ell(N)=\ord_\ell(M)>0$.
In this case,  $$\tilde{\rho}_f|_{G_\ell}\sim \begin{pmatrix}
\tilde{\chi}_1 & ~ \\
~ & \tilde{\chi}_2
\end{pmatrix}$$
where $\tilde{\chi}_2$ is an unramified character such that $\tilde{\chi}_2(\Frob_\ell)=\tilde{a}_\ell(f)$ (cf. \cite[Theorem 3.26(3a)]{HidabookModular}). The residual representation attached to $(V_f)_{I_\ell}$ is one dimensional (cf. \cite[Proof of Lemma 5.4]{HatleyLei19}) and hence the character $\tilde{\chi}_1$ must be 
ramified.
It follows that 
$$\tilde{\bV}_f|_{G_\ell}\sim \begin{pmatrix}
\tilde{\chi}_1^2 & ~ & ~\\
~ & \tilde{\chi}_1\tilde{\chi}_2 & ~\\
~ & ~ & \tilde{\chi}_2^2
\end{pmatrix}
\text{ and hence }
\tilde{\bV}_{f,\psi}|_{G_\ell}\sim \begin{pmatrix}
\tilde{\chi}_1^2\tilde{\psi} & ~ & ~\\
~ & \tilde{\chi}_1\tilde{\chi}_2\tilde{\psi} & ~\\
~ & ~ & \tilde{\chi}_2^2\tilde{\psi}
\end{pmatrix}.
$$
\begin{lemma}\label{lem: iii}
Suppose $\ord_\ell(N)=\ord_\ell(M)>0$. The $d_{\ell,f}$ is odd if and only if 
\begin{itemize}
      \item  $\varepsilon(\ell)\ell^{k-2}\psi(\ell)\equiv 1 \pmod{\mathfrak{P}}$ if $\tilde{\chi}_1\tilde{\psi}$ is unramified.
      \item $a_\ell(f)^2\psi(\ell) \ell^{-1} \equiv 1 \pmod{\mathfrak{P}}$ if $\tilde{\chi}_1\tilde{\psi}$ and $\tilde{\chi}_1^2\tilde{\psi}$ are both ramified.
      \item ${a}_\ell(f)^{-2}{\ell}^{2k-3}{\varepsilon}(\ell)^2{\psi}(\ell) \equiv 1 \pmod{\mathfrak{P}}$ if $\tilde{\chi}_1\tilde{\psi}$ and $\tilde{\psi}$ are ramified  but $\tilde{\chi}_1^2\tilde{\psi}$ is unramified.
      \item any one of equations \eqref{casejii} or \eqref{casejiii} below holds and the other does not hold, if $\tilde{\chi}_1\tilde{\psi}$ is ramified  but $\tilde{\chi}_1^2\tilde{\psi}$ and $\tilde{\psi}$ are unramified.
\end{itemize}
The bullets above exhaust all the possibilities that can occur when $\ord_\ell(N)=\ord_\ell(M)>0$.
\end{lemma}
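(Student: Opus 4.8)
The plan is to read off $(\tilde{\bV}_{f,\psi})_{I_\ell}$ and $\tilde{P}_{\ell,f}(X)$ from the diagonal shape of $\tilde{\bV}_{f,\psi}|_{G_\ell}$ displayed just above, exactly as in the proof of Lemma~\ref{lem: ii}: because that representation is a direct sum of three characters, $(\tilde{\bV}_{f,\psi})_{I_\ell}$ is the sum of those graded pieces whose character is unramified at $\ell$, and $\tilde{P}_{\ell,f}(X)$ is the product, over those pieces, of the linear factors $1-\tilde{c}X$ with $\tilde{c}$ the value at $\Frob_\ell$ of the corresponding unramified character. The inputs are: $\tilde{\chi}_2$ is unramified with $\tilde{\chi}_2(\Frob_\ell)=\tilde{a}_\ell(f)$; $\tilde{\chi}_1$ is ramified; and from $\det\tilde{\rho}_f=\tilde{\varepsilon}\tilde{\omega}_p^{k-1}$ with $\tilde{\omega}_p$ unramified at $\ell\neq p$ one gets $\tilde{\chi}_1|_{I_\ell}=\tilde{\varepsilon}|_{I_\ell}$ and $\tilde{\chi}_1=\tilde{\varepsilon}\tilde{\omega}_p^{k-1}\tilde{\chi}_2^{-1}$ on $G_\ell$ (in particular $\tilde{\varepsilon}$ is ramified at $\ell$, since $\ell\mid M$). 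Consequently the first, second and third graded pieces of $\tilde{\bV}_{f,\psi}$ — with characters $\tilde{\chi}_1^2\tilde{\psi}=\tilde{\varepsilon}^2\tilde{\omega}_p^{2k-2}\tilde{\chi}_2^{-2}\tilde{\psi}$, $\tilde{\chi}_1\tilde{\chi}_2\tilde{\psi}=\tilde{\varepsilon}\tilde{\omega}_p^{k-1}\tilde{\psi}$ and $\tilde{\chi}_2^2\tilde{\psi}$ — are unramified at $\ell$ precisely when $\tilde{\chi}_1^2\tilde{\psi}$, $\tilde{\chi}_1\tilde{\psi}$ and $\tilde{\psi}$ respectively are unramified, and when unramified $\Frob_\ell$ acts on them by $\varepsilon(\ell)^2\ell^{2k-2}a_\ell(f)^{-2}\psi(\ell)$, $\varepsilon(\ell)\ell^{k-1}\psi(\ell)$ and $a_\ell(f)^2\psi(\ell)$ respectively (evaluate the three displayed characters at $\Frob_\ell$, using $\tilde{\omega}_p(\Frob_\ell)=\tilde{\ell}$; when a product of Dirichlet characters is unramified at $\ell$, the corresponding product of symbols denotes the value of that product character, a $\mathfrak{P}$-unit).

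The case split is then forced. Two observations: (a) if $\tilde{\chi}_1\tilde{\psi}$ is unramified then $\tilde{\psi}|_{I_\ell}=\tilde{\varepsilon}^{-1}|_{I_\ell}\neq 1$ and $\tilde{\chi}_1^2\tilde{\psi}|_{I_\ell}=\tilde{\chi}_1|_{I_\ell}\neq 1$, so $\tilde{\psi}$, $\tilde{\chi}_2^2\tilde{\psi}$ and $\tilde{\chi}_1^2\tilde{\psi}$ are all ramified and only the middle piece survives; (b) if $\tilde{\psi}$ is ramified then $\ell\mid N_\psi$, so $\psi(\ell)=0$ and the $\tilde{\chi}_2^2\tilde{\psi}$-piece is ramified. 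In particular the four hypotheses of the lemma are exhaustive (branch on whether $\tilde{\chi}_1\tilde{\psi}$ is ramified; if so, on whether $\tilde{\chi}_1^2\tilde{\psi}$ is; if not, on whether $\tilde{\psi}$ is). By (a), when $\tilde{\chi}_1\tilde{\psi}$ is unramified we get $\tilde{P}_{\ell,f}(X)=1-\varepsilon(\ell)\ell^{k-1}\psi(\ell)X$, so $d_{\ell,f}$ is odd (and then $=1$) iff $\varepsilon(\ell)\ell^{k-2}\psi(\ell)\equiv 1\pmod{\mathfrak{P}}$ — the first bullet.

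For the remaining cases $\tilde{\chi}_1\tilde{\psi}$ is ramified, so the middle piece is discarded. If $\tilde{\chi}_1^2\tilde{\psi}$ is also ramified, only the $\tilde{\chi}_2^2\tilde{\psi}$-piece can survive: $\tilde{P}_{\ell,f}(X)=1-a_\ell(f)^2\psi(\ell)X$ when $\tilde{\psi}$ is unramified and $\tilde{P}_{\ell,f}=1$ otherwise (by (b)), so $d_{\ell,f}$ is odd iff $a_\ell(f)^2\psi(\ell)\ell^{-1}\equiv 1\pmod{\mathfrak{P}}$ — the second bullet. If $\tilde{\chi}_1^2\tilde{\psi}$ is unramified and $\tilde{\psi}$ is ramified, by (b) only the $\tilde{\chi}_1^2\tilde{\psi}$-piece survives, $\tilde{P}_{\ell,f}(X)=1-\varepsilon(\ell)^2\ell^{2k-2}a_\ell(f)^{-2}\psi(\ell)X$, so $d_{\ell,f}$ is odd iff $a_\ell(f)^{-2}\ell^{2k-3}\varepsilon(\ell)^2\psi(\ell)\equiv 1\pmod{\mathfrak{P}}$ — the third bullet. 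Finally, if $\tilde{\chi}_1^2\tilde{\psi}$ and $\tilde{\psi}$ are both unramified then the first and third pieces both survive, $\tilde{P}_{\ell,f}(X)=\bigl(1-\varepsilon(\ell)^2\ell^{2k-2}a_\ell(f)^{-2}\psi(\ell)X\bigr)\bigl(1-a_\ell(f)^2\psi(\ell)X\bigr)$, and $d_{\ell,f}\in\{0,1,2\}$ is odd exactly when precisely one of the two factors vanishes at $X=\tilde{\ell}^{-1}$, i.e.\ exactly one of
\begin{equation}\label{casejii}
a_\ell(f)^{-2}\ell^{2k-3}\varepsilon(\ell)^2\psi(\ell)\equiv 1\pmod{\mathfrak{P}}
\end{equation}
and
\begin{equation}\label{casejiii}
a_\ell(f)^2\psi(\ell)\ell^{-1}\equiv 1\pmod{\mathfrak{P}}
\end{equation}
holds — the fourth bullet. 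The one step that is not pure bookkeeping is pinning down those three Frobenius eigenvalues from the determinant relation and Hida's local description (the $a_\ell(f)^{-1}$ enters through inverting the unramified $\tilde{\chi}_2$), together with the small point that the $\tilde{\psi}$-ramified subcases introduce no spurious odd value, which is precisely observation (b).
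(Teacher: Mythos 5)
Your proof is correct and takes essentially the same route as the paper: decompose $\tilde{\bV}_{f,\psi}|_{G_\ell}$ into the three characters $\tilde{\chi}_1^2\tilde{\psi}$, $\tilde{\chi}_1\tilde{\chi}_2\tilde{\psi}$, $\tilde{\chi}_2^2\tilde{\psi}$, identify the unramified pieces in each of the four (exhaustive) cases, and evaluate Frobenius via $\tilde{\chi}_1\tilde{\chi}_2=\det\tilde{\rho}_f=\tilde{\varepsilon}\tilde{\omega}_p^{k-1}$; your observations (a) and (b) reproduce the paper's case eliminations. The only cosmetic quibble is your aside that $\tilde{\varepsilon}$ is ramified ``since $\ell\mid M$'' (reduction mod $\mathfrak{P}$ could a priori kill ramification; the correct reason is that $\tilde{\chi}_1$ is ramified and $\tilde{\chi}_1|_{I_\ell}=\tilde{\varepsilon}|_{I_\ell}$), but this is not used in any essential way since observation (a) only needs $\tilde{\chi}_1|_{I_\ell}\neq 1$.
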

\begin{proof} 
\noindent{\textit{Case 1:}} Suppose that the character $\tilde{\chi}_1\tilde{\psi}$ is unramified. Since $\tilde{\chi}_1 $ is ramified, it implies that $\tilde{\chi}_1^2\tilde{\psi}$ is also ramified. The character $\tilde{\psi}$ must also be ramified at $\ell$ because if not then $\tilde{\chi_1}=(\tilde{\chi}_1\tilde{\psi})(\tilde{\psi}^{-1})$ becomes unramified which is a contradiction. 
Then 
$\tilde{\bV}_{f,\psi}|_{I_\ell}\sim \begin{pmatrix}
\tilde{\chi}_1^2\tilde{\psi} & ~ & ~\\
~ & 1 & ~\\
~ & ~ & \tilde{\psi}
\end{pmatrix}.$
It follows that the the space $(\tilde{\bV}_{f,\psi})_{I_\ell}$ is one dimensional and the action of $\Frob_\ell$ on this space is via $\tilde{\chi}_1\tilde{\chi}_2\tilde{\psi}$. Therefore $\tilde{P}_{\ell,f}=(1-\tilde{\varepsilon}(\ell)\tilde{\ell}^{k-1}\tilde{\psi}(\ell)X)$. It follows that $d_{\ell,f}=1$ if and only if 
\begin{equation}\label{casej1}
\varepsilon(\ell)\ell^{k-2}\psi(\ell)\equiv 1 \pmod{\mathfrak{P}}.
\end{equation}

\noindent{\textit{Case 2:}} Suppose that both the characters $\tilde{\chi}_1\tilde{\psi}$ and $\tilde{\chi}_1^2\tilde{\psi}$ are ramified.
Then $(\tilde{\bV}_{f,\psi})_{I_\ell}$ is nontrivial if and only if $\tilde{\psi}$ is unramified at $\ell$; in this case the action of $\Frob_\ell$ on the one-dimensional space $(\tilde{\bV}_{f,\psi})_{I_\ell}$ is given by $\tilde{\chi}_2^2\tilde{\psi}$. Therefore, $\tilde{P}_{\ell,f}=(1-\tilde{a}_\ell(f)^2\tilde{\psi}(\ell)X)$. It follows that $d_{\ell,f}=1$ if and only if 
\begin{equation}\label{casejii}
a_\ell(f)^2\psi(\ell) \ell^{-1} \equiv 1 \pmod{\mathfrak{P}}.
\end{equation}
\noindent{\textit{Case 3:}} Suppose that the character $\tilde{\chi}_1\tilde{\psi}$ is ramified but $\tilde{\chi}_1^2\tilde{\psi}$ is unramified.
Now there are two subcases of this, which we deal separately.

\noindent{\textit{Subcase (3i):}} $\tilde{\psi}$ is ramified at $\ell$. In this case 
$\tilde{\bV}_{f,\psi}|_{I_\ell}\sim \begin{pmatrix}
1 & ~ & ~\\
~ & \tilde{\chi}_1\tilde{\psi} & ~\\
~ & ~ & \tilde{\psi}
\end{pmatrix}.$ It follows that the space $(\tilde{\bV}_{f,\psi})_{I_\ell}$ is one dimensional and the action of $\Frob_\ell$ on this space is via the character $\tilde{\chi}_1^2\tilde{\psi}=\det(\tilde{\rho}_f)^2\tilde{\chi}_2^{-2}\tilde{\psi}$. It follows that $\tilde{P}_{\ell,f}=\big(1-\tilde{a}_\ell(f)^{-2}\tilde{\ell}^{2k-2}\tilde{\varepsilon}(\ell)^2\tilde{\psi}(\ell)X\big)$. Do $d_{\ell,f}=1$ if and only if 
\begin{equation}\label{casejiii}
{a}_\ell(f)^{-2}{\ell}^{2k-3}{\varepsilon}(\ell)^2{\psi}(\ell) \equiv 1 \pmod{\mathfrak{P}}.
\end{equation}

\noindent{\textit{Subcase (3ii):}} $\tilde{\psi}$ is unramified at $\ell$. This means that $\tilde{\chi}_1$ is ramified since we are under the assumption that $\tilde{\chi}_1\tilde{\psi}$ is ramified.
In this case 
$\tilde{\bV}_{f,\psi}|_{I_\ell}\sim \begin{pmatrix}
1 & ~ & ~\\
~ & \tilde{\chi}_1 & ~\\
~ & ~ & 1
\end{pmatrix}.$ 
It follows that the the space $(\tilde{\bV}_{f,\psi})_{I_\ell}$ is two dimensional and the action of $\Frob_\ell$ on this space is via the diagonal matrix $\begin{pmatrix}
\tilde{\chi}_1^2\tilde{\psi} & ~\\
~ & \tilde{\chi}_2^2\tilde{\psi}
\end{pmatrix}.$
Therefore $\tilde{P}_{\ell,f}=\big(1-\tilde{a}_\ell(f)^{-2}\tilde{\ell}^{2k-2}\tilde{\varepsilon}(\ell)^2\tilde{\psi}(\ell)X\big)\big(1-\tilde{a}_\ell(f)^2\tilde{\psi}(\ell)X\big)$. Hence, 
$d_{\ell,f}=1$ if and only if any one of equations \eqref{casejii} or \eqref{casejiii} holds and the other does not hold.
\end{proof}
Summarizing, we have shown the following proposition.
\begin{proposition}\label{prop: mod2}
   We define $\mathcal{S}_{f, \psi}\subset \Sigma_0$  to be the subset consisting of primes $\ell$ satisfying Lemma \ref{lem:noN}, Lemma \ref{lem: ii} and Lemma \ref{lem: iii} such that $d_{\ell,f}$ is odd. Then $$\sum_{\ell \in \Sigma_0} \delta_\ell(\bA_{f,\psi}/\Q_\infty)\equiv |\mathcal{S}_{f, \psi}| \pmod 2.$$
\end{proposition}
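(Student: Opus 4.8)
The statement to be proved, Proposition \ref{prop: mod2}, is essentially a bookkeeping consolidation of the earlier corollary and the three lemmas (Lemma \ref{lem:noN}, Lemma \ref{lem: ii}, Lemma \ref{lem: iii}), so the plan is to simply assemble these pieces. The plan is as follows. First, recall from the Corollary to the Greenberg--Vatsal proposition that for each $\ell \in \Sigma_0$ one has $\delta_\ell(\bA_{f,\psi}/\Q_\infty) \equiv d_{\ell,f} \pmod 2$. Therefore, modulo $2$, the sum $\sum_{\ell \in \Sigma_0} \delta_\ell(\bA_{f,\psi}/\Q_\infty)$ is congruent to $\sum_{\ell \in \Sigma_0} d_{\ell,f}$, and a term $d_{\ell,f}$ contributes to this sum mod $2$ if and only if it is odd.

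Next I would observe that the primes $\ell \in \Sigma_0$ split into two disjoint classes: those with $\ell \nmid N$ and those with $\ell \mid N$, and that under hypothesis \textbf{(Hyp)} every prime $\ell \mid N$ falls into exactly one of the two cases $\ell \mid\mid N, \ell \nmid M$ or $\ord_\ell(N) = \ord_\ell(M) > 0$ (by the cited criterion from \cite{MTT}). Lemma \ref{lem:noN} gives a complete description — via the mutually disjoint conditions \eqref{CaseI}, \eqref{CaseII}, \eqref{CaseIII} — of exactly when $d_{\ell,f}$ is odd for $\ell \nmid N$; Lemma \ref{lem: ii} does the same for $\ell \mid\mid N, \ell \nmid M$; and Lemma \ref{lem: iii}, whose final assertion states that its bulleted conditions exhaust all possibilities in the case $\ord_\ell(N) = \ord_\ell(M) > 0$, does so for the remaining primes. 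Thus the set
\[
\mathcal{S}_{f,\psi} := \{\, \ell \in \Sigma_0 : d_{\ell,f} \text{ is odd}\,\}
\]
is well-defined, and by the three lemmas membership in $\mathcal{S}_{f,\psi}$ is governed by the explicit congruence conditions listed there; this is precisely the set named in the statement.

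Finally I would conclude by a mod-$2$ count: since $d_{\ell,f}$ is a nonnegative integer for every $\ell$,
\[
\sum_{\ell \in \Sigma_0} d_{\ell,f} \equiv \#\{\, \ell \in \Sigma_0 : d_{\ell,f} \text{ odd}\,\} = |\mathcal{S}_{f,\psi}| \pmod 2,
\]
and combining with the congruence $\delta_\ell \equiv d_{\ell,f} \pmod 2$ from the Corollary yields
\[
\sum_{\ell \in \Sigma_0} \delta_\ell(\bA_{f,\psi}/\Q_\infty) \equiv |\mathcal{S}_{f,\psi}| \pmod 2,
\]
which is the desired identity. There is no real obstacle here — the genuine mathematical content lives entirely in the Greenberg--Vatsal formula and in the case-by-case computations of $\tilde{P}_{\ell,f}$ carried out in the three lemmas; the only point requiring a word of care is to confirm that Lemmas \ref{lem:noN}, \ref{lem: ii} and \ref{lem: iii} genuinely cover every prime of $\Sigma_0$ (which they do, using \textbf{(Hyp)} and the dichotomy from \cite{MTT} to handle the primes dividing $N$), so that $\mathcal{S}_{f,\psi}$ is unambiguously described by explicit, checkable conditions.
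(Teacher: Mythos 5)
Your proposal is correct and matches the paper's argument exactly: the paper simply states the proposition as a summary of the preceding corollary ($\delta_\ell \equiv d_{\ell,f} \pmod 2$) together with Lemmas \ref{lem:noN}, \ref{lem: ii} and \ref{lem: iii}, which is precisely the assembly you carry out.
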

\section{Congruent modular forms}\label{sec:congruent}
We consider two modular forms $f_i$ (for $i=1,2$) of level $N_i$ and character $\varepsilon_i$ as in section \ref{basics}. By enlarging $L$, if necessary, we assume that $a_n(f_i) \in L$ for all $n$. Similarly, enlarging $\Sigma$ if necessary, we assume that $\Sigma$ is a set of places of $\Q$ that contains $p, \infty$, the primes dividing $N_1N_2$ and  the primes dividing the conductor $N_\psi$. We continue to assume that \textbf{(Hyp)} is true for both $f_1$ and $f_2$. We further assume 
that the residual representations are isomorphic, i.e. 
$$\tilde{\rho_1}\cong \tilde{\rho_2}.$$ Under the above circumstances the following result is a work of Ray--Sujatha--Vatsal (cf. \cite[Proposition 3.11]{RaySujathaVatsal}).
\begin{proposition}\label{prop:Anwesh}
The $\mu$-invariant of $\Sel_{p^\infty}(\bA_{f_1, \psi}/\Q_\infty)$ vanishes if and only if the $\mu$-invariant of $\Sel_{p^\infty}(\bA_{f_2, \psi}/\Q_\infty)$ vanishes. Moreover, if these $\mu$-invariants are zero, then the imprimitive-$\lambda$-invariants coincide, i.e.
$$\lambda^{\Sigma_0}(\bA_{f_1, \psi}/\Q_\infty)=\lambda^{\Sigma_0}(\bA_{f_2, \psi}/\Q_\infty).$$
\end{proposition}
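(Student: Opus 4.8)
The plan is to run the now-standard Greenberg--Vatsal / Emerton--Pollack--Weston comparison of Iwasawa invariants for congruent Galois representations, carried out through the mod-$\mathfrak{P}$ imprimitive Selmer group and adapted to the symmetric square. The decisive observation is local: the residual local condition at $p$ does not change when $f_1$ is replaced by $f_2$. Indeed, $\tilde{\rho_1}\cong\tilde{\rho_2}$ gives a $G_\Q$-isomorphism $\bA_{f_1,\psi}[\mathfrak{P}]\cong\bA_{f_2,\psi}[\mathfrak{P}]$ of the symmetric square residual modules; moreover $\det\tilde{\rho_1}=\det\tilde{\rho_2}$ forces $\tilde{\varepsilon_1}=\tilde{\varepsilon_2}$, and since $k$ is even the two characters by which $I_p$ acts on the graded pieces of $\tilde{\rho_i}$ — namely $\tilde{\omega}_p^{k-1}|_{I_p}$ and the trivial one — are distinct, so the two-step $G_p$-filtration of Theorem \ref{thm:modular}(2) reduces modulo $\mathfrak{P}$ to the \emph{canonical} $G_p$-stable line. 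Hence the induced three-step filtration of $\bV_{f_i}$, and in particular $\Fil^1$ and the quotient $\bA_{f_i,\psi}/\Fil^1\bA_{f_i,\psi}$ (which is unramified at $p$, since $\psi$ has conductor prime to $p$) appearing in the definition of $\mathcal{L}_{\eta_p}$, reduce to data on the common module $\bA_{f_1,\psi}[\mathfrak{P}]$ that does not depend on $i$. Therefore the imprimitive residual Greenberg Selmer groups $\Sel^{\Sigma_0}(\bA_{f_1,\psi}[\mathfrak{P}]/\Q_\infty)$ and $\Sel^{\Sigma_0}(\bA_{f_2,\psi}[\mathfrak{P}]/\Q_\infty)$ are literally the same $\Lambda/\mathfrak{P}\Lambda\cong k_L[[\Gamma]]$-module.

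Next, for each $i$ separately, I would prove a natural isomorphism
$$\Sel_{p^\infty}^{\Sigma_0}(\bA_{f_i,\psi}/\Q_\infty)[\mathfrak{P}]\;\cong\;\Sel^{\Sigma_0}(\bA_{f_i,\psi}[\mathfrak{P}]/\Q_\infty).$$
One starts from the exact sequence $0\to\Sel_{p^\infty}^{\Sigma_0}(\bA_{f_i,\psi}/\Q_\infty)\to H^1(\Q_\Sigma/\Q,\bA_{f_i,\psi})\xrightarrow{\lambda_{f_i,\psi}^{\Sigma_0}}\mathcal{H}_p(\Q_\infty,\bA_{f_i,\psi})\to 0$ — right-exact by assumption (inv) and \cite[Proposition 3.3]{RaySujathaVatsal} — takes $\mathfrak{P}$-torsion, and compares the resulting snake-lemma sequence with its residual analogue. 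The comparison reduces to: (i) $H^1(\Q_\Sigma/\Q,\bA_{f_i,\psi}[\mathfrak{P}])\cong H^1(\Q_\Sigma/\Q,\bA_{f_i,\psi})[\mathfrak{P}]$, which holds because $H^0(\Q_\Sigma/\Q,\bA_{f_i,\psi})=0$ (a consequence of the local vanishing in (inv), since $G_{\Q_p}\subset G_\Q$); (ii) the analogous statement at $\eta_p$, using that $H^0(\Q_{\infty,\eta_p},\bA_{f_i,\psi}[\mathfrak{P}])=0$, which again follows from (inv) as $\Gal(\Q_{\infty,\eta_p}/\Q_p)$ is pro-$p$; and (iii) a diagram chase matching $\mathcal{L}_{\eta_p}$ and the non-primitive quotient $\mathcal{H}_p$ with their residual analogues, using the explicit $G_p$-structure of $\bA_{f_i,\psi}/\Fil^1\bA_{f_i,\psi}$ recorded in the first paragraph. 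Even if item (iii) yields this only up to a finite correction term, that term is itself determined by $(\tilde{\rho_i},\tilde{\psi})$ and hence is independent of $i$, which suffices below.

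I would then record that $\Sel_{p^\infty}^{\Sigma_0}(\bA_{f_i,\psi}/\Q_\infty)$ has no proper $\Lambda$-submodule of finite index — equivalently that its Pontryagin dual has no nonzero finite $\Lambda$-submodule — which follows from the surjectivity of $\lambda_{f_i,\psi}^{\Sigma_0}$ together with the vanishing of the relevant $H^0$ and $H^2$ over $\Q_\Sigma$ supplied by (inv), while cotorsionness is (Tor) combined with Lemma \ref{eq:Lemma1} and \eqref{eq: lambda}. For a cofinitely generated cotorsion $\Lambda$-module $X$ with no finite-index submodule, the elementary structure theory gives $\mu_\Lambda(X)=0$ if and only if $X[\mathfrak{P}]$ is $k_L[[\Gamma]]$-cotorsion, and, in that case, $\lambda_\Lambda(X)=\lambda_{k_L[[\Gamma]]}(X[\mathfrak{P}])$. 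Applying this with $X=\Sel_{p^\infty}^{\Sigma_0}(\bA_{f_i,\psi}/\Q_\infty)$, and using that $X[\mathfrak{P}]$ is, by the first two paragraphs, the \emph{same} $k_L[[\Gamma]]$-module for $i=1$ and $i=2$ (in particular (Tor) for $f_1$ propagates to $f_2$, since a finitely generated $\Lambda$-module whose reduction mod $\mathfrak{P}$ is $k_L[[\Gamma]]$-torsion is itself $\Lambda$-torsion), we obtain at once that $\mu=0$ for $\Sel_{p^\infty}(\bA_{f_1,\psi}/\Q_\infty)$ if and only if $\mu=0$ for $\Sel_{p^\infty}(\bA_{f_2,\psi}/\Q_\infty)$ (transferring between the primitive and imprimitive Selmer groups via \eqref{eq:Sel1} and Lemma \ref{eq:Lemma1}), and, when these $\mu$-invariants vanish, $\lambda^{\Sigma_0}(\bA_{f_1,\psi}/\Q_\infty)=\lambda^{\Sigma_0}(\bA_{f_2,\psi}/\Q_\infty)$.

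The step I expect to be the main obstacle is item (iii) above — matching the $p$-adic Greenberg local condition with its residual counterpart \emph{exactly}, rather than only up to a bounded error. This is where the explicit form of $\rho_{f_i}|_{G_p}$ from Theorem \ref{thm:modular}, the ($p$-distinguished, hence automatic for even weight) genericity of the graded characters, and hypothesis (inv) must all be used together; a cruder analysis would give only equality of $\lambda$-invariants modulo $2$ instead of on the nose. The global cohomological vanishing statements in the second and third paragraphs are, by contrast, routine given (inv) and the absolute irreducibility of $\tilde{\rho_i}$.
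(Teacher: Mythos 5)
Your strategy is exactly the standard one: the paper does not prove this proposition itself but quotes it from Ray--Sujatha--Vatsal \cite[Proposition 3.11]{RaySujathaVatsal}, whose argument (and the paper's own parallel treatment of the non-ordinary case via Proposition \ref{prop:congprop}, Theorem \ref{prop:finite} and Theorem \ref{prop: lambda}) proceeds precisely as you propose --- identify the residual imprimitive Selmer groups of $f_1$ and $f_2$, identify $\Sel^{\Sigma_0}_{p^\infty}(\bA_{f_i,\psi}/\Q_\infty)[\mathfrak{P}]$ with the residual imprimitive Selmer group, and use the absence of proper finite-index $\Lambda$-submodules to read off $\mu$ and $\lambda^{\Sigma_0}$ from the $\mathfrak{P}$-torsion. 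Your worry about step (iii) is unfounded: since $\bA_{f_i,\psi}/\Fil^1\bA_{f_i,\psi}$ is divisible and unramified at $p$, the group $H^0(I_{\eta_p},\bA_{f_i,\psi}/\Fil^1\bA_{f_i,\psi})$ is divisible, so the relevant correction term vanishes and the comparison of the local conditions at $p$ is exact, not merely up to bounded error.
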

\begin{theorem}\label{Mainthm} Assume the vanishing of the $\mu$-invariants as in proposition \ref{prop:Anwesh}.
    We have the congruence
    $$\lambda(\bA_{f_1, \psi}/\Q_\infty) +|\mathcal{S}_{f_1,\psi}|\equiv \lambda(\bA_{f_2, \psi}/\Q_\infty)+|\mathcal{S}_{f_2,\psi}| \pmod{2}.$$
\end{theorem}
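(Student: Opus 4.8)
The plan is to combine the three ingredients the excerpt has already assembled: equation \eqref{eq: lambda}, which relates the primitive and imprimitive $\lambda$-invariants via the local terms $\delta_\ell$; Proposition \ref{prop: mod2}, which identifies $\sum_{\ell \in \Sigma_0}\delta_\ell(\bA_{f,\psi}/\Q_\infty)$ modulo $2$ with $|\mathcal{S}_{f,\psi}|$; and Proposition \ref{prop:Anwesh}, which says the imprimitive $\lambda$-invariants of the two forms are literally equal once the relevant $\mu$-invariants vanish. So the strategy is essentially bookkeeping modulo $2$ rather than any new structural result.

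Concretely, I would proceed as follows. First, since the $\mu$-invariant of $\Sel_{p^\infty}(\bA_{f_1,\psi}/\Q_\infty)$ vanishes by hypothesis, Proposition \ref{prop:Anwesh} gives that the $\mu$-invariant of $\Sel_{p^\infty}(\bA_{f_2,\psi}/\Q_\infty)$ also vanishes and, crucially, that $\lambda^{\Sigma_0}(\bA_{f_1,\psi}/\Q_\infty) = \lambda^{\Sigma_0}(\bA_{f_2,\psi}/\Q_\infty)$. Second, I apply \eqref{eq: lambda} to each $f_i$ separately to write
$$\lambda^{\Sigma_0}(\bA_{f_i,\psi}/\Q_\infty) = \lambda(\bA_{f_i,\psi}/\Q_\infty) + \sum_{\ell \in \Sigma_0}\delta_\ell(\bA_{f_i,\psi}/\Q_\infty).$$
Third, by Proposition \ref{prop: mod2}, $\sum_{\ell \in \Sigma_0}\delta_\ell(\bA_{f_i,\psi}/\Q_\infty) \equiv |\mathcal{S}_{f_i,\psi}| \pmod 2$ for $i=1,2$. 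Reducing the displayed equality of imprimitive $\lambda$-invariants modulo $2$ and substituting then yields
$$\lambda(\bA_{f_1,\psi}/\Q_\infty) + |\mathcal{S}_{f_1,\psi}| \equiv \lambda(\bA_{f_2,\psi}/\Q_\infty) + |\mathcal{S}_{f_2,\psi}| \pmod 2,$$
which is the claim.

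I should double-check one hygiene point: Proposition \ref{prop: mod2} and Proposition \ref{prop:Anwesh} are stated for modular forms satisfying \textbf{(Hyp)} and the running assumptions (i) (namely $\psi$ even), (ii) \textbf{(inv)}, and \textbf{(Tor)}; all of these are in force here by the standing hypotheses of Section \ref{sec:congruent}, so the citations apply directly. The only genuinely non-formal input is Proposition \ref{prop:Anwesh}, whose proof (in Ray--Sujatha--Vatsal) compares the Selmer groups of the two forms through the common residual representation; but since that is quoted as a black box, the present argument has no real obstacle — it is a one-line deduction once the three quoted results are lined up. If I wanted to be careful, the one place to be slightly attentive is that \eqref{eq: lambda} holds as an equality of integers (not merely mod $2$), so there is no subtlety in reducing it; the mod-$2$ reduction is forced only by the fact that Proposition \ref{prop: mod2} controls the $\delta_\ell$-sum only modulo $2$.
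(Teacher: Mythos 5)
Your argument is correct and is essentially identical to the paper's own proof: both combine Proposition \ref{prop:Anwesh} with \eqref{eq: lambda} to equate $\lambda(\bA_{f_i,\psi}/\Q_\infty)+\sum_{\ell\in\Sigma_0}\delta_\ell(\bA_{f_i,\psi}/\Q_\infty)$ for $i=1,2$, then reduce modulo $2$ using Proposition \ref{prop: mod2}. Your added remarks on the standing hypotheses and on \eqref{eq: lambda} being an integer equality are sound but not needed beyond what the paper states.
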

\begin{proof}
     Proposition \ref{prop:Anwesh} and \eqref{eq: lambda} give $$\lambda(\bA_{f_1,\psi}/\Q_\infty) +\sum_{\ell \in \Sigma_0}\delta_\ell(\bA_{f_1,\psi}/\Q_\infty)=\lambda(\bA_{f_2,\psi}/\Q_\infty) +\sum_{\ell \in \Sigma_0}\delta_\ell(\bA_{f_2,\psi}/\Q_\infty).$$
     The result now follows from proposition \ref{prop: mod2}.
\end{proof}
\begin{remark}

The natural generalization of formula \eqref{Green} for $\bV_{f_i,\psi}$ cannot be deduced easily since one would need the generalized pairing of Flach \cite[Theorem 2]{Flach} which further relies on the representation being self dual which is not true in the symmetric square case. Hence the results in this article are only restricted to explicitly calculating the parity of the error terms in the comparison of $\lambda$-invariants of congruent representations.

\end{remark}
\section{The non-ordinary case}
We recall the setup as in \cite{BLV} with some notational changes to match with section \ref{sec:notation}. Let $f$ is a normalized, cuspidal, eigen-newform of weight $k$ (in \cite{BLV} it is $k+2$), level $N$ and nebentypus $\varepsilon$. We also assume that $p \nmid N$ and $p\geq k$ is an odd prime such that $a_p(f)=0$. As before let $\Sigma$ be  a finite set of places of $\Q$ that contains $p$, primes that divide the level $N$, primes that divide the conductor $N_\psi$ and  $\infty$. We write $\pm\alpha$ for the roots of the Hecke polynomial $X^2+ \varepsilon(p)p^{k-1}$ of $f$ at $p$. Let $L/\Q$ be a number field containing the Hecke field $\Q(\{a_n(f)\}_{n \geq 1})$ of $f$ as well as $\alpha^2$ and the image of a Dirichlet character $\psi$ of conductor $N_\psi$ coprime to $Np$. Assume that  $\psi$ satisfies all the conditions mentioned in \cite[page 3]{BLV} (our $\psi$ is their $\chi^{-1}$). Let $\mathfrak{P}$ be a prime of $L$ above $p$ and let $\O$ be the ring of integers of the completion $L_{\mathfrak{P}}$. Let us put $V_f^*=\Hom(V_f, L_{\mathfrak{P}})$ and we endow it with the contragredient Galois action. We set $\bM_{f, \psi^{-1}}:=\mathrm{Sym}^2 \text{ }T_f^*(1+\psi^{-1})$. Let $\Gamma_0=\Gal(\Q_p(\mu_{p^\infty})/\Q)$ so that $ \Gamma_0 = \Delta \times \Gamma$ where $\Delta$ is a finite group of order $p-1$ and $\Gamma \cong \Z_p$. The hypothesis $a_p(f)=0$ gives a $G_{\Q_p}$-equivariant decomposition
$$\bM_{f, \psi^{-1}}=M_{1,f, \psi^{-1}} \oplus M_{2,f, \psi^{-1}}$$
(cf. \cite[page 5 and page 25]{BLV}) and exploiting this decomposition, Büyükboduk--Lei--Venkat defined three signed Coleman maps $$\col^\clubsuit:\HIw(\Q_p(\mu_{p^\infty}),\bM_{f, \psi^{-1}}) \rightarrow \Lambda_{\O}(\Gamma_0)$$
for $\clubsuit \in \{+,-,\bullet\}$ (see \cite[Section 4.2]{BLV}). 
The kernels of these maps are used to define certain local Selmer conditions at $p$ which leads to the following definition of doubly signed Selmer groups (cf. \cite[Defn. 4.4.1]{BLV}).
Let us set $\bM_{f, \psi^{-1}}^\vee(1):=(\bM_{f, \psi^{-1}})^\vee(1)$.
\begin{definition}\label{def}
    Let $\mathcal{S}$ denote the set of pairs $\{(+,-),(+,\bullet), (-,\bullet)\}.$ For $\fS = (\clubsuit, \spadesuit) \in \fS$, we define the discrete Selmer group $\Sel_{\fS}(\bM_{f, \psi^{-1}}^\vee(1)/\Q(\mu_{p^\infty}))$ as the kernel of the restriction map
    \begin{align*}
       H^1(\Q(\mu_{p^\infty}), \bM_{f, \psi^{-1}}^\vee(1))\rightarrow \prod_{v \mid p}\frac{ H^1(\Q(\mu_{p^\infty})_v, \bM_{f, \psi^{-1}}^\vee(1))}{ H^1_{\fS}(\Q(\mu_{p^\infty})_v, \bM_{f, \psi^{-1}}^\vee(1))} \times \prod_{v \nmid p}\frac{H^1(\Q(\mu_{p^\infty})_v, \bM_{f, \psi^{-1}}^\vee(1))}{H^1_{\mathrm{un}}(\Q(\mu_{p^\infty})_v, \bM_{f, \psi^{-1}}^\vee(1))},
    \end{align*}
    where $v$ runs through all primes of $\Q(\mu_{p^\infty})$ and for $v \mid p$, the local condition $H^1_{\fS}(\Q(\mu_{p^\infty})_v, \bM_{f, \psi^{-1}}^\vee(1))$ is the orthogonal complement of $\ker\big(\col^\clubsuit\big) \cap \ker\big(\col^\spadesuit\big)$ under the local Tate pairing.
\end{definition}

\begin{remark}
   We have taken $\psi^{-1}$ in the definition of $\bM_{f, \psi^{-1}}$ because $\bM_{f, \psi^{-1}}^\vee(1)=\bT_{f, \psi} \otimes \Q_p/\Z_p=\bA_{f, \psi}$ (cf. \cite[Notation 3.2.4]{LZ16}) which coincides with the notation we fixed in section \ref{sec:notation}.
\end{remark}
Here is a conjecture on the cotorsioness of these Selmer groups made in \cite[Conjecture 4.4.3]{BLV}
\begin{conjecture}
For every $\fS \in \mathcal{S}$, and every character $\eta$ of $\Delta$, the $\eta$-isotypic component $e_\eta\Sel_{\fS}(\bM_{f, \psi^{-1}}^\vee(1)/\Q(\mu_{p^\infty}))$ is $\Lambda$-cotorsion.
\end{conjecture}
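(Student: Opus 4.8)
The plan is to run, in the non-ordinary signed setting, the same two-step template that establishes $\Lambda$-cotorsion of Greenberg Selmer groups in the ordinary case (as in \cite[Theorem C]{LZ16}): a Poitou--Tate / Euler-characteristic computation reducing cotorsion to the $\Lambda$-torsionness of a dual (compact) Selmer group, followed by an Euler-system bound on that dual Selmer group. Fix $\fS=(\clubsuit,\spadesuit)\in\mathcal{S}$ and a character $\eta$ of $\Delta$, and write $\Lambda=\O[[\Gamma]]$, a two-dimensional regular ring. Showing $e_\eta\Sel_{\fS}(\bM_{f,\psi^{-1}}^\vee(1)/\Q(\mu_{p^\infty}))$ is $\Lambda$-cotorsion is the same as showing its Pontryagin dual has $\Lambda$-rank zero, and I would deduce this from (a) an exact local corank count at the primes above $p$ coming from the signed Coleman maps, together with (b) the non-vanishing of a doubly-signed $p$-adic $L$-function.

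First I would write down the global Poitou--Tate exact sequence for $\bM_{f,\psi^{-1}}^\vee(1)$ over $\Q(\mu_{p^\infty})$ with the Selmer structure $\fS$ (unramified away from $p$, and the orthogonal complement of $\ker\col^\clubsuit\cap\ker\col^\spadesuit$ at $p$), relating $\Sel_{\fS}$, the global $H^1(\Q_\Sigma/\Q(\mu_{p^\infty}),-)$, the product $\prod_v H^1/H^1_{\fS}$ of local quotients, the compact dual Selmer group $X_{\fS}$ for $\bM_{f,\psi^{-1}}$ with the dual local conditions, and $H^2(\Q_\Sigma/\Q(\mu_{p^\infty}),-)$. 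Feeding in the Iwasawa-theoretic global Euler characteristic formula, the vanishing of the relevant $H^0$'s (the analogue of \textbf{(inv)}), and the computation of the local coranks, one finds that $\mathrm{corank}_\Lambda\,e_\eta\Sel_{\fS}-\mathrm{rank}_\Lambda\,e_\eta X_{\fS}$ equals an explicit integer. The key point is that each signed Coleman map $\col^\clubsuit\colon\HIw(\Q_p(\mu_{p^\infty}),\bM_{f,\psi^{-1}})\to\Lambda_\O(\Gamma_0)$ is surjective up to finite index onto a rank-one submodule and the kernels $\ker\col^\clubsuit\cap\ker\col^\spadesuit$ have the $\Lambda$-corank needed so that $H^1_{\fS}$ at $p$ is ``balanced'', forcing the explicit integer to be $0$; then $e_\eta\Sel_{\fS}$ is $\Lambda$-cotorsion if and only if $e_\eta X_{\fS}$ is $\Lambda$-torsion.

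To control $X_{\fS}$ I would invoke the Beilinson--Flach Euler system for the symmetric square (Lei--Loeffler--Zerbes, Kings--Loeffler--Zerbes), obtained by specializing the Rankin--Selberg Beilinson--Flach classes at $g=f$ and projecting onto the $\mathrm{Sym}^2$-component; its bottom class sits in the (global) Iwasawa cohomology of $\bM_{f,\psi^{-1}}$, and the signed explicit reciprocity law of Büyükboduk--Lei--Venkat identifies $\col^\clubsuit$ of it with a doubly-signed $p$-adic $L$-function $L_{\fS}(f,\psi)$. Running Rubin's Euler-system machinery in its Iwasawa-theoretic, core-rank-one form over $\Lambda$ bounds the characteristic ideal of $e_\eta X_{\fS}$ by (the $\eta$-part of) the ideal generated by these signed $p$-adic $L$-functions; in particular $e_\eta X_{\fS}$ is $\Lambda$-torsion as soon as $e_\eta L_{\fS}(f,\psi)\neq 0$. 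Combined with the previous paragraph this yields the conjecture.

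The hard part --- and the reason the statement is only a conjecture --- is step (b): proving that some $e_\eta L_{\fS}(f,\psi)$ does not vanish identically. There is no Rohrlich-type non-vanishing theorem available at this level of generality for symmetric-square $p$-adic $L$-functions; the interpolation property only pins down critical values at finitely many characters, which does not by itself exclude identical vanishing of a signed branch. One can hope to make the argument unconditional for suitably chosen auxiliary $\psi$ --- mirroring the way \cite[Theorem C]{LZ16} forces non-vanishing of a single critical $L$-value in the ordinary case --- and then to transport that non-vanishing through the factorization relating signed and unsigned $p$-adic $L$-functions; but for general $f$ and $\psi$ this remains open. A secondary, more technical obstacle is the precise analysis of the images of the maps $\col^\clubsuit$ in the symmetric-square non-ordinary setting (their non-degeneracy and the exact $\Lambda$-corank of the kernels above), on which the Euler-characteristic bookkeeping of the second paragraph relies.
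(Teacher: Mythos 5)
This statement is not proved in the paper: it is precisely Conjecture 4.4.3 of B\"uy\"ukboduk--Lei--Venkat, imported verbatim and explicitly labelled a conjecture, with the paper only pointing to partial evidence (their Theorem B(ii)) and otherwise \emph{assuming} cotorsionness as a hypothesis (cf.\ \textbf{(Tor)} in the ordinary case and the hypotheses of Lemma \ref{lem:4.9} in the signed case). So there is no ``paper proof'' to compare against, and your proposal cannot be judged as an alternative route to an established result.

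As a proof, your write-up has a genuine and, at present, unfillable gap, which to your credit you identify yourself: the entire argument is conditional on the non-vanishing of a doubly-signed symmetric-square $p$-adic $L$-function $e_\eta L_{\fS}(f,\psi)$, and no such non-vanishing theorem exists. The Poitou--Tate reduction of $\mathrm{corank}_\Lambda\, e_\eta\Sel_{\fS}(\bM_{f, \psi^{-1}}^\vee(1)/\Q(\mu_{p^\infty}))$ to the $\Lambda$-torsionness of a compact dual Selmer group, and the use of Beilinson--Flach classes together with the signed reciprocity laws of \cite{BLV} to bound that dual Selmer group, is indeed the expected strategy and is essentially how the cited partial evidence is obtained; but without the non-vanishing input the Euler-system machine gives nothing, so the conclusion is not established. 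A second point you gloss over: the ``balanced'' corank count at $p$ is not automatic here. The local Iwasawa cohomology $\HIw(\Q_p(\mu_{p^\infty}),\bM_{f,\psi^{-1}})$ has $\Lambda$-rank $3$, the doubly-signed condition $\ker(\col^\clubsuit)\cap\ker(\col^\spadesuit)$ cuts out a rank-one submodule, and since $\mathrm{Sym}^2$ is not self-dual (a point the paper itself flags in the remark after Theorem \ref{Mainthm}) the global Euler-characteristic bookkeeping does not reduce to the familiar self-dual template; verifying that the resulting explicit integer is $0$ requires the precise structure of the images and kernels of the three Coleman maps from \cite[Section 4.2]{BLV}, which you defer. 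In short: your outline is a reasonable roadmap for why the conjecture is believed, but it is not a proof, and the statement remains open both in the paper and in the literature it cites.
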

Some evidence for this conjecture is also provided (see \cite[Theorem B, (ii)]{BLV}).

\subsection{The cyclotomic and finite level.}  Recall that $\Gamma$ is the Galois group of the cyclotomic extension $\Q_\infty$ over $\Q$ and $\Gamma_n=\Gal(\Q_\infty/\Q_{(n)})$ where $\Q_{(n)}$ is the extension over $\Q$  such that $[\Q_{(n)}:\Q]=p^n$. For $v \mid p$, we set $$H^1_{\fS}((\Q_\infty)_v, \bM_{f, \psi^{-1}}^\vee(1)):=H^1_{\fS}(\Q(\mu_{p^\infty})_v, \bM_{f, \psi^{-1}}^\vee(1))^\Delta$$ and $$H^1_{\fS}((\Q_{(n)})_v, \bM_{f, \psi^{-1}}^\vee(1)):=H^1_{\fS}((\Q_\infty)_v, \bM_{f, \psi^{-1}}^\vee(1))^{\Gamma_n}$$
and we define the corresponding Selmer groups $\Sel_{\fS}(\bM_{f, \psi^{-1}}^\vee(1)/\Q_\infty)$ and $\Sel_{\fS}(\bM_{f, \psi^{-1}}^\vee(1)/\Q_{(n)})$ with these local conditions. 
\begin{theorem}\label{prop:finite}
    The Selmer group $\Sel_{\fS}(\bA_{f, \psi}/\Q_\infty)$ contains no proper $\Lambda$-submodule of finite index.
\end{theorem}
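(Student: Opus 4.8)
The plan is to follow the now-standard argument of Greenberg (and its adaptations by Hatley--Lei, and Büyükboduk--Lei--Venkat) which deduces ``no proper $\Lambda$-submodule of finite index'' from a surjectivity statement for the global-to-local defining map of the Selmer group together with the vanishing of an appropriate $H^0$. Concretely, write $X_{\fS} := \Sel_{\fS}(\bA_{f,\psi}/\Q_\infty)^\vee$ for the Pontryagin dual. By a theorem of Greenberg (the same mechanism used in \cite[Prop. 3.10]{Greenberg} and \cite[Prop. 4.14]{kobayashi03}-type arguments), $X_\fS$ has no nonzero finite $\Lambda$-submodule once one knows: (a) the localization map $H^1(\Q_\Sigma/\Q, \bA_{f,\psi}) \to \prod_{v} H^1/H^1_{\fS}$ cutting out $\Sel_\fS(\bA_{f,\psi}/\Q_\infty)$ is surjective, and (b) certain $\Gamma$-cohomology of the global and local pieces (equivalently $H^0(\Q_\infty, \bA_{f,\psi})$ and the $H^0$ of the quotients of local conditions) is trivial or at least $\Lambda$-cofree-compatible. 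So the first step is to set up this framework precisely: take the Poitou--Tate / Greenberg exact sequence relating $\Sel_\fS(\bA_{f,\psi}/\Q_\infty)$, the global $H^1$, and the product of local quotients, and dualize.

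Second, I would verify the input hypotheses. For (b), the triviality of $H^0(\Q_\infty, \bA_{f,\psi})$ follows from irreducibility of the residual representation $\tilde\rho_f$ (which we assume throughout) and the fact that $\bA_{f,\psi}$ is a twist of $\Sym^2$ of an absolutely irreducible two-dimensional representation, so it has no $G_\Q$-fixed (hence no $G_{\Q_\infty}$-fixed, as $\Q_\infty/\Q$ is pro-$p$ and the module is $p$-divisible) points other than $0$; one also needs the companion vanishing $H^0(\Q_{\infty,\eta_p}, \bA_{f,\psi}/(\text{the }H^1_\fS\text{-part}))$ and $H^0$ of the local Tate-dual at $p$, which is exactly the content of assumption \textbf{(inv)} propagated up the cyclotomic tower (the relevant $H^0$'s only shrink). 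For (a), I would invoke the surjectivity of the local Selmer-defining map: for the non-ordinary signed setting this is the analogue of what Kobayashi and later works establish, namely that under \textbf{(inv)} the map into $\prod_v H^1/H^1_\fS$ has no cokernel because the would-be cokernel is dual to a Selmer-type group for the Tate dual that vanishes by the same $H^0$-argument. This is essentially \cite[Proposition 3.3]{RaySujathaVatsal}-type reasoning transported to the signed local conditions, using that $H^1_\fS((\Q_\infty)_v,\bM_{f,\psi^{-1}}^\vee(1))$ was \emph{defined} by descent from $\Q(\mu_{p^\infty})$ (as emphasized in the introduction), so the control over $\Delta$- and $\Gamma_n$-invariants is clean.

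Third, with (a) and (b) in hand, the conclusion is formal: if $M \subseteq X_\fS$ were a nonzero finite $\Lambda$-submodule, then applying $\Gamma$-coinvariants (or working over a large enough finite layer $\Q_{(n)}$) and using that $X_\fS/(\gamma^{p^n}-1)$ computes, via control, the dual Selmer group at level $\Q_{(n)}$, one contradicts the surjectivity of the finite-level localization map. More precisely, the presence of a finite submodule forces a nonzero $\Lambda$-torsion ``tail'' that survives in the cokernel of some finite-layer global-to-local map, contradicting (a) at that layer (the finite-level surjectivity again following from \textbf{(inv)}, since taking $\Gamma_n$-invariants of a surjection of cofinitely generated cofree-type modules stays surjective when the relevant $H^0$ vanishes).

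I expect the main obstacle to be \textbf{Step two, part (a)}: proving surjectivity of the global-to-local map for the \emph{signed} local condition at $p$. Unlike the ordinary case where one has the explicit filtration and \cite[Proposition 3.3]{RaySujathaVatsal} applies directly, here the local condition $H^1_\fS$ is the orthogonal complement of $\ker(\col^\clubsuit)\cap\ker(\col^\spadesuit)$, so one must understand the cokernel of localization in terms of the Pontryagin dual of this intersection of kernels of Coleman maps — this is precisely the kind of analysis the introduction flags as needed in the proof of Lemma~\ref{lemma:second} and Proposition~\ref{prop:congprop}. The key technical point will be that $\bM_{f,\psi^{-1}}^\vee(1)$ has no $G_{\Q_p}$-invariants and, correspondingly, $H^2(\Q_{\Sigma}/\Q_\infty, \bA_{f,\psi}) = 0$, so that the global Euler characteristic forces the localization map to be surjective once the relevant local $H^0$'s vanish; assembling this carefully with the descent-definition of the signed conditions is where the real work lies.
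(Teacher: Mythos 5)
Your overall architecture is the right one and matches the paper's: establish (i) surjectivity of the global-to-local map defining the signed Selmer group over $\Q_\infty$, (ii) vanishing of $H^1(\Q_\infty,\bA_{f,\psi})_\Gamma$ (via $H^2(\Q_\Sigma/\Q_\infty,\bA_{f,\psi})=0$ and Hochschild--Serre), and then conclude by taking $\Gamma$-coinvariants of the resulting short exact sequence. You also correctly identify the surjectivity at $p$ as the crux. However, there is a genuine gap in how you propose to prove that surjectivity. You assert that the cokernel of the localization map ``is dual to a Selmer-type group for the Tate dual that vanishes by the same $H^0$-argument.'' That is not what the Poitou--Tate/Greenberg machinery gives: \cite[Proposition 4.13]{Greenberg} identifies the cokernel of the finite-level global-to-local map with $H^0(\Q_{(n)},(\bT_{f,\psi})^*)^\vee$ \emph{only under the hypothesis that the dual Selmer group $\Sel_{\fS}(\bJ_{f,\psi^{-1},-s}/\Q_{(n)})$ is finite}; the dual Selmer group itself is certainly not killed by an $H^0$ computation, and without its finiteness the cokernel can be strictly larger than $H^0(\,\cdot\,)^\vee$. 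Assumption \textbf{(inv)} alone does not deliver this finiteness.

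What closes this gap in the paper is the cyclotomic twisting device that you omit entirely: one assumes the dual Selmer group over $\Q_\infty$ is $\Lambda$-cotorsion, twists by $(\omega|_\Gamma)^s$ so that for all but finitely many $s$ its $\Gamma_n$-invariants are finite for every $n$, transports this to finiteness at the finite layers $\Q_{(n)}$ via the control theorem (Lemma \ref{lem1}), and only then applies Greenberg's proposition to get surjectivity (Lemma \ref{lem: first}); one also needs the separate local statement (Lemma \ref{lemma:second}) that $H^1_{\fS}((\Q_\infty)_v,\bA_{f,\psi})_\Gamma=0$, proved from the Coleman-map description of the dual of the signed local condition as a submodule of a free $\Lambda$-module. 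Since having no proper finite-index $\Lambda$-submodule is insensitive to twisting, proving the statement for one good twist suffices. The introduction of the paper flags this twisting as the step ``without which the argument fails,'' and your proposal as written would indeed stall at exactly that point; relatedly, your proof needs the standing cotorsion hypothesis on the dual Selmer group, which you do not invoke.
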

Before we prove this theorem we have to prove some preliminary lemmas. 
\vspace{.2cm}

For $s \in \Z$, we can take the cyclotomic twist $\bA_{f, \psi,s}:=\bA_{f, \psi} \otimes (\omega|_{\Gamma})^s$ where $\omega|_{\Gamma}:\Gamma \rightarrow 1+p\Z_p$ is an isomorphism and one can define the corresponding Selmer group $\Sel_{\fS}(\bA_{f, \psi,s}/\Q_\infty)$  just as before. For the prime $v$ above $p$ one uses the $G_{\Q_p}$-invariant submodule $$H^1_{\fS}((\Q_\infty)_v, \bA_{f, \psi, s})):=H^1_{\fS}((\Q_\infty)_v, \bA_{f, \psi}))\otimes (\omega|_{\Gamma})^s.$$
As a $\Gal(\overline{\Q}/\Q_\infty)$-module $\bA_{f, \psi, s} = \bA_{f, \psi}$ and thus $H^1(\Q_\infty, \bA_{f, \psi, s}) = H^1(\Q_\infty, \bA_{f, \psi}) \otimes (\omega|_{\Gamma})^s $. For a prime $v$, $H^1((\Q_\infty)_v, \bA_{f, \psi, s}) = H^1((\Q_\infty)_v, \bA_{f, \psi}) \otimes (\omega|_{\Gamma})^s$. For the finite level $\Q_{(n)}$, we can similarly define the twisted Selmer group $\Sel_{\fS}(\bA_{f, \psi,s}/\Q_{(n)})$ as above with the local condition at $p$ defined as $H^1((\Q_\infty)_v, \bA_{f, \psi, s})^{\Gamma_n}$. Thus we remark that for $K=\Q_\infty$ or $\Q_{(n)}$, $\Sel_{\fS}(\bA_{f, \psi,s}/K)\cong \Sel_{\fS}(\bA_{f, \psi}/K)\otimes (\omega|_{\Gamma})^s$ as $\Lambda$-modules. 
\vspace{.2cm}

Let $\bM_{f, \psi, -s}:=\bM_{f, \psi} \otimes (\omega|_{\Gamma})^{-s}$. Let $\bJ_{f,\psi^{-1},-s}=\bM_{f, \psi, -s} \otimes \Q_p/\Z_p \cong (\bT_{f, \psi,s})^*$
We begin with a "control theorem" for these signed Selmer groups.
\begin{lemma}\label{lem1}
    For all but finitely many $s \in \Z$, the kernel and cokernel of the restriction map 
    $$\Sel_{\fS}(\bJ_{f,\psi^{-1},-s}/\Q_{(n)} )\rightarrow \Sel_{\fS}(\bJ_{f,\psi^{-1},-s}/\Q_\infty)^{\Gamma_n}$$ are finite of bounded orders as $n$ varies.
\end{lemma}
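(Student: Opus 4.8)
The plan is to prove Lemma \ref{lem1} by the standard Mazur-style control theorem argument, adapted to the signed local condition at $p$ that has here been \emph{defined by descent}. Because $\bJ_{f,\psi^{-1},-s}\cong(\bT_{f,\psi,s})^\ast$ is a free $\O$-module of finite rank with Galois action, its $\Gamma_n$-invariants and the associated restriction maps are controlled by the inflation–restriction exact sequence. Concretely, for $K=\Q_{(n)}$ one writes the defining exact sequence of $\Sel_\fS(\bJ_{f,\psi^{-1},-s}/K)$ as the kernel of a global-to-local map, and one compares the three terms of this sequence at level $\Q_{(n)}$ with the $\Gamma_n$-invariants of the corresponding terms at level $\Q_\infty$. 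The snake lemma then reduces the problem to controlling (a) the kernel and cokernel of the global restriction $H^1(\Q_\Sigma/\Q_{(n)},\bJ)\to H^1(\Q_\Sigma/\Q_\infty,\bJ)^{\Gamma_n}$ and (b) the kernel of the induced map on the local quotients appearing in the Selmer definition, at primes above $p$ and away from $p$.

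First I would handle the global restriction map. Its kernel is $H^1(\Gamma_n,H^0(\Q_\infty,\bJ_{f,\psi^{-1},-s}))$ and its cokernel injects into $H^2(\Gamma_n,H^0(\Q_\infty,\bJ_{f,\psi^{-1},-s}))$; since $\Gamma_n\cong\Z_p$ has cohomological dimension $1$, the cokernel term vanishes, and the kernel is bounded (in fact bounded by $|H^0(\Q_\infty,\bJ)|$, which is finite for all but finitely many twists $s$ because the cyclotomic character has infinite order, so $\bA_{f,\psi,s}$ — equivalently $\bJ$ — has no nonzero $G_{\Q_\infty}$-fixed vectors once $s$ avoids the finitely many ``bad'' residues; this is exactly where the ``all but finitely many $s\in\Z$'' hypothesis is used, and it is the reason the cyclotomic twist was introduced). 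Next, for the local terms at $\ell\neq p$ one uses Lemma \ref{eq:Lemma1}-type facts: the relevant local cohomology groups over $\Q_\infty$ are cofinitely generated cotorsion with finite $\Gamma_n$-invariants of bounded order, again for all but finitely many $s$, by the same $H^0$-vanishing argument applied locally. The key new point is the local term at $p$: here, by construction, $H^1_\fS((\Q_{(n)})_v,\bJ)$ is \emph{defined} as $H^1_\fS((\Q_\infty)_v,\bJ)^{\Gamma_n}$, so the map from the $\Q_{(n)}$-local quotient to the $\Gamma_n$-invariants of the $\Q_\infty$-local quotient has kernel and cokernel computed by a single inflation–restriction sequence relating $H^1((\Q_{(n)})_v,\bJ)$, $H^1((\Q_\infty)_v,\bJ)^{\Gamma_n}$, and the subgroups; this is controlled by $H^i(\Gamma_n,H^0((\Q_\infty)_v,\bJ))$ for $i=0,1$, which as above is finite of bounded order and vanishes in degree $2$. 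This is precisely the design advantage the introduction emphasizes: defining the finite-level condition by descent makes this comparison tautologically clean, in contrast to Kobayashi's setup.

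Assembling these via the snake lemma applied to the commutative diagram with exact rows
$$
\begin{CD}
0 @>>> \Sel_\fS(\bJ/\Q_{(n)}) @>>> H^1(\Q_\Sigma/\Q_{(n)},\bJ) @>>> \mathcal{P}(\Q_{(n)}) \\
@. @VVV @VVV @VVV \\
0 @>>> \Sel_\fS(\bJ/\Q_\infty)^{\Gamma_n} @>>> H^1(\Q_\Sigma/\Q_\infty,\bJ)^{\Gamma_n} @>>> \mathcal{P}(\Q_\infty)^{\Gamma_n}
\end{CD}
$$
(where $\mathcal{P}$ denotes the product of the local quotient terms and I suppress the twist subscript $f,\psi^{-1},-s$ for readability), one concludes that the kernel of the middle–left vertical map injects into the kernel of the global restriction (finite, bounded), and the cokernel is controlled by the kernel of the rightmost vertical map together with the cokernel of the global restriction (which vanishes). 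The kernel of the rightmost vertical map is a finite product over $\ell\in\Sigma$ of the finite bounded groups discussed above, hence finite of bounded order. Therefore the kernel and cokernel of $\Sel_\fS(\bJ_{f,\psi^{-1},-s}/\Q_{(n)})\to\Sel_\fS(\bJ_{f,\psi^{-1},-s}/\Q_\infty)^{\Gamma_n}$ are finite of bounded orders as $n$ varies, for all but finitely many $s\in\Z$.

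The main obstacle I anticipate is bookkeeping at the prime above $p$: one must verify that the descent definition $H^1_\fS((\Q_{(n)})_v,\bJ)=H^1_\fS((\Q_\infty)_v,\bJ)^{\Gamma_n}$ is compatible with the orthogonal-complement/Coleman-map description inherited from \cite{BLV}, i.e. that the subgroup is genuinely a sub-$G_{\Q_p}$-module whose $\Gamma_n$-invariants fit into inflation–restriction, and that the quotient $H^1((\Q_\infty)_v,\bJ)/H^1_\fS$ has the same $H^0$-vanishing that drives the boundedness; the twist parameter $s$ must be chosen to avoid finitely many congruence classes both globally and at every $\ell\in\Sigma$ simultaneously, which is harmless but must be stated carefully. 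Everything else is the classical Greenberg–Mazur control-theorem mechanism.
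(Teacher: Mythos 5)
Your proposal is correct and follows essentially the same route as the paper: a snake-lemma comparison of the defining sequences at level $\Q_{(n)}$ and $\Q_\infty$, with the global restriction controlled by inflation–restriction, the local term at $p$ handled tautologically because $H^1_{\fS}((\Q_{(n)})_v,\cdot)$ is defined by descent, and the primes away from $p$ handled by the standard bounded-kernel argument. The only cosmetic difference is that the paper deduces the vanishing of $H^0(\Q_\infty,\bJ_{f,\psi^{-1},-s})$ (hence that the global restriction is an isomorphism for every $s$) directly from hypothesis \textbf{(inv)}, whereas you invoke the exclusion of finitely many twists $s$ for this; both suffice.
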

\begin{proof}
    Consider the commutative diagram 
    
	\begin{equation}\label{Selmer_definition}
	\begin{tikzcd}
	0 \arrow[r] & \Sel_{\fS}(\bJ_{f,\psi^{-1},-s}/\Q_{(n)} ) \arrow[d, "\rho_{{n}}"] \arrow[r] &  H^1(\Q_{(n)}, \bJ_{f,\psi^{-1},-s}) \arrow[d, "h"]  \arrow[r, "\lambda"] & \bigoplus_{v}\mathcal{P}_{v}(\bJ_{f,\psi^{-1},-s}/\Q_{(n)} ) \arrow[d, "\oplus_v\Xi_{{n,v}}"] \\
		0 \arrow[r] & \Sel_{\fS}(\bJ_{f,\psi^{-1},-s}/\Q_\infty)^{\Gamma_n} \arrow[r] & H^1(\Q_{\infty}, \bJ_{f,\psi^{-1},-s})^{\Gamma_n}  \arrow[r] & \bigoplus_{v}\mathcal{P}_{v}(\bJ_{f,\psi^{-1},-s}/\Q_{\infty})^{\Gamma_n}
	\end{tikzcd}
	\end{equation}
Here $\mathcal{P}_v(\bJ_{f,\psi^{-1},-s},-)$ is the local term at place $v$ defining the corresponding signed Selmer group. By hypothesis \textbf{(inv)} and the inflation restriction exact sequence the map $h$ is an isomorphism. Next we analyse the kernel and cokernel of the map $\Xi_{n,v}$. 

For the prime $v$ above $p$, consider the commutative diagram 
\begin{equation}\label{local}
	\begin{tikzcd}
	0 \arrow[r] & H^1_{\fS}((\Q_{(n)})_v, \bJ_{f,\psi^{-1},-s})) \arrow[d, "m"] \arrow[r] &  H^1((\Q_{(n)})_v,\bJ_{f,\psi^{-1},-s})) \arrow[d, "g"]  \arrow[r, twoheadrightarrow] & \frac{H^1((\Q_{(n)})_v, \bJ_{f,\psi^{-1},-s}))}{H^1_{\fS}((\Q_{(n)})_v, \bJ_{f,\psi^{-1},-s}))} \arrow[d, "\Xi_{n,v}"] \\
		0 \arrow[r] & H^1_{\fS}((\Q_\infty)_v, \bJ_{f,\psi^{-1},-s}))^{\Gamma_n} \arrow[r] &  H^1((\Q_\infty)_v, \bJ_{f, \psi^{-1},- s}))^{\Gamma_n}\arrow[r] & \left(\frac{H^1((\Q_\infty)_v, \bJ_{f,\psi^{-1},-s}))}{H^1_{\fS}((\Q_\infty)_v,\bJ_{f,\psi^{-1},-s}))}  \right)^{\Gamma_n}
	\end{tikzcd}
	\end{equation}
 The map $m$ is an isomorphism by definition. The central map $g$ is an isomorphism by inflation restriction exact sequence and \textbf{(inv)}. Therefore, the map $\Xi_{n,v}$ is injective. 
 
 For the prime $v \nmid p$, it is a usual known argument (see for example \cite[page 1645, proof of Lemma 2.3]{ponsinet} or \cite[second paragraph of page 1275]{HatleyLei19}) that the map $\Xi_{n,v}$ has finite kernel of bounded orders as $n$ varies. 
\end{proof}

\begin{lemma}\label{lem: first}
  Suppose that the Selmer group $\Sel_{\fS}(\bJ_{f, \psi^{-1}, -t}/\Q_\infty)$ is $\Lambda$-cotorsion for some fixed $t \in \Z$.    Then for all but finitely many $s$, the map 
    $$H^1(\Q,\bA_{f, \psi,s}) \rightarrow  \bigoplus_{v}\mathcal{P}_{v}(\bA_{f, \psi,s}/\Q ) $$
    is surjective, and for all $s \in \Z$, the map 
     $$H^1(\Q_\infty,\bA_{f, \psi,s}) \rightarrow  \bigoplus_{v}\mathcal{P}_{v}(\bA_{f, \psi,s}/\Q_\infty ) $$
     is surjective.
\end{lemma}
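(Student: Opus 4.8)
The plan is to derive both surjectivity statements from global Poitou--Tate duality, by identifying each cokernel with the Pontryagin dual of a compact signed Selmer group of $\bM_{f,\psi^{-1},-s}$ and showing this dual group vanishes. The duality dictionary is as follows: since $\bJ_{f,\psi^{-1},-s}\cong(\bT_{f,\psi,s})^{*}$, the discrete module $\bA_{f,\psi,s}$ is Cartier dual to the lattice $\bM_{f,\psi^{-1},-s}$, and -- this is where the definition of the signed conditions enters -- the annihilator under the local Tate pairing of the local condition $H^{1}_{\fS}((\Q_\infty)_v,\bA_{f,\psi,s})$ at $v\mid p$ is exactly the compact condition $\ker(\col^{\clubsuit})\cap\ker(\col^{\spadesuit})$ (twisted by $(\omega|_{\Gamma})^{-s}$), while at $v\nmid p$ the unramified subspace is self-orthogonal. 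Writing $\Sel_{\fS}(\bM_{f,\psi^{-1},-s}/K)$ for the compact Selmer group of $\bM_{f,\psi^{-1},-s}$ over $K\in\{\Q,\Q_\infty\}$ with these complementary local conditions, Poitou--Tate gives an exact sequence
\[
H^{1}(\Q_\Sigma/K,\bA_{f,\psi,s})\xrightarrow{\lambda}\bigoplus_{v}\mathcal{P}_{v}(\bA_{f,\psi,s}/K)\longrightarrow \Sel_{\fS}(\bM_{f,\psi^{-1},-s}/K)^{\vee}\longrightarrow H^{2}(\Q_\Sigma/K,\bA_{f,\psi,s}).
\]
Since $\Lambda$-torsionness is invariant under cyclotomic twisting, the hypothesis that $\Sel_{\fS}(\bJ_{f,\psi^{-1},-t}/\Q_\infty)$ is $\Lambda$-cotorsion for one $t$ upgrades to the same for $\Sel_{\fS}(\bJ_{f,\psi^{-1},-s}/\Q_\infty)$ for every $s\in\Z$.

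Over $\Q_\infty$ one has $H^{2}(\Q_\Sigma/\Q_\infty,\bA_{f,\psi,s})=0$ (a theorem of Iwasawa, weak Leopoldt for the cyclotomic tower), so $\coker(\lambda)\cong\Sel_{\fS}(\bM_{f,\psi^{-1},-s}/\Q_\infty)^{\vee}$. This compact Selmer group sits inside $\HIw(\Q_\infty,\bM_{f,\psi^{-1},-s})$, which is $\Lambda$-torsion free because $H^{0}(\Q_\infty,\bJ_{f,\psi^{-1},-s})=0$ (a twist of $H^{0}(\Q_\infty,(\bT_{f,\psi})^{*})$, which vanishes by \textbf{(inv)} and residual irreducibility); and it is $\Lambda$-torsion because, by the Poitou--Tate rank formula together with the fact that the kernels of the Coleman maps $\col^{\clubsuit}$ carry exactly the complementary local $\Lambda$-rank, $\mathrm{rank}_{\Lambda}\Sel_{\fS}(\bM_{f,\psi^{-1},-s}/\Q_\infty)=\mathrm{corank}_{\Lambda}\Sel_{\fS}(\bJ_{f,\psi^{-1},-s}/\Q_\infty)=0$. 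A $\Lambda$-module that is both torsion and torsion free is zero, so $\coker(\lambda)=0$ for all $s$.

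Over $\Q$ the same sequence shows $\coker(\lambda)^{\vee}$ is a quotient of $\Sel_{\fS}(\bM_{f,\psi^{-1},-s}/\Q)$, so it suffices to see the latter vanishes for all but finitely many $s$. The long exact sequence attached to $0\to\bM_{f,\psi^{-1},-s}\to\bM_{f,\psi^{-1},-s}\otimes\Q_p\to\bJ_{f,\psi^{-1},-s}\to 0$ shows that its $\Z_p$-torsion injects into $H^{0}(\Q,\bJ_{f,\psi^{-1},-s})$, which is nonzero only when a power of $\omega|_{\Gamma}$ occurs among the Jordan--Hölder constituents of the symmetric square representation, hence for at most finitely many $s$; and rationally $\Sel_{\fS}(\bM_{f,\psi^{-1},-s}/\Q)\otimes\Q_p$ injects, for generic $s$, into the divisible part of $\Sel_{\fS}(\bJ_{f,\psi^{-1},-s}/\Q)$, which by Lemma~\ref{lem1} (at $n=0$) together with the $\Lambda$-cotorsion property is finite for all but finitely many $s$ -- and so has trivial divisible part. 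Thus for all but finitely many $s$ the group $\Sel_{\fS}(\bM_{f,\psi^{-1},-s}/\Q)$ is simultaneously finite and $\Z_p$-torsion free, hence zero, and $\lambda$ is surjective.

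The step I expect to be the main obstacle is the duality bookkeeping: one must verify with care that the orthogonal complement of the descended signed local condition at $p$ over $\Q_\infty$ (built in the previous subsection by descent from $\Q_p(\mu_{p^\infty})$) is precisely the intersection of the kernels of the Büyükboduk--Lei--Venkat Coleman maps, and that the $\Lambda$-ranks of the signed local conditions are exactly what is needed for the global rank formula to collapse to the two displayed equalities. The remaining ingredients -- weak Leopoldt, the snake-lemma torsion estimate, the control theorem of Lemma~\ref{lem1}, and the elimination of finitely many $s$ by twisting -- are routine once this is in place.
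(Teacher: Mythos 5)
Your route is genuinely different in its mechanics from the paper's, so let me first record the contrast. The paper never introduces the compact Selmer group of the lattice $\bM_{f,\psi^{-1},-s}$ nor the $\Lambda$-adic Poitou--Tate sequence over $\Q_\infty$. It works entirely at the finite layers: the twisting argument (cotorsionness of $\Sel_{\fS}(\bJ_{f,\psi^{-1},-t}/\Q_\infty)$ for one $t$ forces $\Sel_{\fS}(\bJ_{f,\psi^{-1},-s}/\Q_\infty)^{\Gamma_n}$ to be finite for every $n$ and all but finitely many $s$), combined with the control theorem of Lemma~\ref{lem1}, gives finiteness of $\Sel_{\fS}(\bJ_{f,\psi^{-1},-s}/\Q_{(n)})$ for every $n$; Greenberg's Proposition~4.13 then identifies the cokernel of the global-to-local map over $\Q_{(n)}$ with $H^0(\Q_{(n)},\bJ_{f,\psi^{-1},-s})^\vee$, which vanishes by \textbf{(inv)} together with the pro-$p$ argument; the statement over $\Q_\infty$ follows by passing to the direct limit and using that $\bA_{f,\psi,s}\cong\bA_{f,\psi}$ as $\Gal(\overline{\Q}/\Q_\infty)$-modules. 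You reuse the twist-and-control step (your appeal to Lemma~\ref{lem1} at $n=0$ is exactly the paper's mechanism), but you replace the finite-level duality by a vanishing argument for compact Selmer groups. This buys a clean argument over $\Q_\infty$ (torsion intersected with torsion-free is zero) at the cost of extra inputs: weak Leopoldt for $\bA_{f,\psi,s}$ over $\Q_\infty$, which the paper only establishes later, inside the proof of Theorem~\ref{prop:finite}, and a $\Lambda$-rank computation.

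Two of your steps are asserted rather than proved, and this is where the proposal is genuinely incomplete. First, the identity $\rank_\Lambda\Sel_{\fS}(\bM_{f,\psi^{-1},-s}/\Q_\infty)=\mathrm{corank}_\Lambda\Sel_{\fS}(\bJ_{f,\psi^{-1},-s}/\Q_\infty)$ is not simply ``the Poitou--Tate rank formula'': for a non-self-dual three-dimensional representation it requires verifying that complex conjugation has a two-dimensional $(-1)$-eigenspace on $\bM_{f,\psi^{-1}}$ (this is where the Tate twist in $\mathrm{Sym}^2T_f^*(1+\psi^{-1})$ and the evenness of $\psi$ enter) and that $\ker(\col^\clubsuit)\cap\ker(\col^\spadesuit)$ has $\Lambda$-rank exactly $1$ inside the rank-$3$ local Iwasawa cohomology for each of the three choices of $\fS$. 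An inequality $\leq$, obtained from the natural map $\Sel_{\fS}(\bM_{f,\psi^{-1},-s}/\Q_\infty)\otimes\Q_p/\Z_p\to\Sel_{\fS}(\bJ_{f,\psi^{-1},-s}/\Q_\infty)$, would suffice and is easier, but still needs the compatibility of local conditions. Second, over $\Q$ you must know that the orthogonal complement under local Tate duality of the descended condition $H^1_{\fS}((\Q_\infty)_v,\bA_{f,\psi,s})^{\Gamma}$ inside $H^1(\Q_p,\bM_{f,\psi^{-1},-s})$ is a condition comparable with the one cut out by the Coleman maps; you flag this yourself as ``the main obstacle'' and do not resolve it. The paper's citation of Greenberg's Proposition~4.13 packages exactly this duality bookkeeping at finite level, which is why its proof is shorter; to keep your route you would need to either prove the orthogonality statement or descend to the finite layers as the paper does.
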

\begin{proof}
    Since the Selmer group $\Sel_{\fS}(\bJ_{f, \psi^{-1},-t}/\Q_\infty)$ is $\Lambda$-cotorsion, then for all but finitely many $u\in \Z$, $(\Sel_{\fS}(\bJ_{f, \psi^{-1},-t}/\Q_\infty) \otimes \omega^u)^{\Gamma_n}=\Sel_{\fS}(\bJ_{f, \psi^{-1},u-t}/\Q_\infty)^{\Gamma_n}$ is finite for every $n$. Since $t$ is fixed, it means that for all but finitely many $s \in \Z$, $\Sel_{\fS}(\bJ_{f,\psi^{-1},-s}/\Q_\infty)^{\Gamma_n}$ is finite for every $n$. Thus by Lemma \ref{lem1}, possibly avoiding another finite number of $s \in \Z$, $\Sel_{\fS}(\bJ_{f,\psi^{-1},-s}/\Q_{(n)})$ is finite for every $n$. For such an $s$ and any $n$, \cite[Proposition 4.13]{Greenberg} shows that the cokernel of the map 
    \begin{equation}\label{eq:mithu}
    H^1(\Q_{(n)},\bA_{f, \psi,s}) \rightarrow  \bigoplus_{v}\mathcal{P}_{v}(\bA_{f, \psi,s}/\Q_{(n)} ) 
    \end{equation}
    is the Pontryagin dual of $H^0(\Q_{(n)},\bJ_{f,\psi^{-1},-s}).$ Recall that $\bJ_{f,\psi^{-1},-s}=(\bT_{f, \psi,s})^*.$ By \textbf{(inv)}  we have $H^0(\Q, \bJ_{f, \psi^{-1},0})=0$. Since $\Gal(\Q_\infty/\Q)$ is pro-$p$,   $H^0(\Q_\infty, \bJ_{f, \psi^{-1},0})$ is trivial. Moreover, as  $\bJ_{f,\psi^{-1},-s} \cong  \bJ_{f, \psi^{-1},0}$ as $\Gal(\overline{\Q}/\Q_\infty)$-modules, hence $H^0(\Q, \bJ_{f,\psi^{-1},-s})=0$ and $H^0(\Q_{(n)}, \bJ_{f,\psi^{-1},-s})=0$.
Therefore the map in \eqref{eq:mithu} is surjective for any $n$. The lemma then follows by passing to the direct limit relative to the restriction maps and noting that, for any $s$,  $\bA_{f, \psi, s} \cong \bA_{f, \psi}$ as $\Gal(\overline{\Q}/\Q_\infty)$-modules.
\end{proof}
\begin{lemma}\label{lemma:second}
    For all $s\in \Z$, the restriction map $$\mathcal{P}_{v}(\bA_{f, \psi,s}/\Q ) \rightarrow \mathcal{P}_{v}(\bA_{f, \psi,s}/\Q_\infty)^\Gamma $$ is surjective.
\end{lemma}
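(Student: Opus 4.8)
The plan is to establish the surjectivity one place $v$ at a time, isolating the essential case $v\mid p$ from the routine case $v\nmid p$. Fix $v$, write $A$ for the local condition over $\Q_\infty$ defining $\mathcal{P}_v$ (so $A=H^1_{\fS}((\Q_\infty)_v,\bA_{f,\psi,s})$ if $v\mid p$ and $A=H^1_{\mathrm{un}}((\Q_\infty)_v,\bA_{f,\psi,s})$ if $v\nmid p$), and put $B=H^1((\Q_\infty)_v,\bA_{f,\psi,s})$ and $C=B/A=\mathcal{P}_v(\bA_{f,\psi,s}/\Q_\infty)$. By construction the local condition at the bottom layer is $A^{\Gamma}$ (for $v=\ell\nmid p$, one replaces $\Gamma$ throughout by the decomposition group $\Gamma_\ell\cong\Z_p$ of a prime $\eta_0$ above $\ell$, after the usual reduction by Shapiro's lemma), and restriction carries $H^1(\Q_p,\bA_{f,\psi,s})$ into $B^{\Gamma}$, so the map in the statement is the one induced by the composite $H^1(\Q_p,\bA_{f,\psi,s})\xrightarrow{\mathrm{res}}B^{\Gamma}\to C^{\Gamma}$. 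Since $\Gamma\cong\Z_p$ has $p$-cohomological dimension one, inflation--restriction (the relevant $H^2(\Gamma,-)$ vanishing) makes $\mathrm{res}$ surjective onto $B^{\Gamma}$, so it suffices to show $B^{\Gamma}\to C^{\Gamma}$ is onto; by the long exact $\Gamma$-cohomology sequence of $0\to A\to B\to C\to 0$ this amounts to the vanishing of the connecting map $C^{\Gamma}\to H^1(\Gamma,A)$, which holds once $H^1(\Gamma,A)=A_{\Gamma}=0$.

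For $v\mid p$ one first passes to $s=0$: as $\bA_{f,\psi,s}\cong\bA_{f,\psi}$ over $G_{\Q_\infty}$ and $A=H^1_{\fS}((\Q_\infty)_v,\bA_{f,\psi})\otimes(\omega|_{\Gamma})^s$, twisting by a character of $\Gamma$ is induced by a ring automorphism of $\Lambda$ and in particular preserves the condition ``$A_{\Gamma}=0$''. For $s=0$ we have $H^1_{\fS}((\Q_\infty)_v,\bA_{f,\psi})=H^1_{\fS}(\Q_p(\mu_{p^\infty}),\bA_{f,\psi})^{\Delta}$, and by the definition of the signed condition as the orthogonal complement of $\ker(\col^\clubsuit)\cap\ker(\col^\spadesuit)$ under the local Tate pairing, combined with local duality between Iwasawa and discrete cohomology and the identification $\bA_{f,\psi}=\bM_{f,\psi^{-1}}^{\vee}(1)$, its Pontryagin dual is (up to the involution on $\Lambda_{\O}(\Gamma_0)$) the module $\HIw(\Q_p(\mu_{p^\infty}),\bM_{f,\psi^{-1}})/\bigl(\ker(\col^\clubsuit)\cap\ker(\col^\spadesuit)\bigr)$. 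This module embeds via $(\col^\clubsuit,\col^\spadesuit)$ into the free $\Lambda_{\O}(\Gamma_0)$-module $\Lambda_{\O}(\Gamma_0)^2$; applying the trivial-character idempotent of $\Delta$ (an exact functor, since $|\Delta|=p-1$ is invertible) exhibits its $\Delta$-coinvariants, which is exactly $A^{\vee}=H^1_{\fS}((\Q_\infty)_v,\bA_{f,\psi})^{\vee}$, as a submodule of a free $\O[[\Gamma]]$-module; in particular $A^{\vee}$ has no $(\gamma-1)$-torsion for a topological generator $\gamma$ of $\Gamma$. Dualizing gives $A_{\Gamma}=A/(\gamma-1)A=0$, which settles the case $v\mid p$.

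For $v=\ell\nmid p$ the condition is the unramified one, and the needed vanishing $H^1_{\mathrm{un}}((\Q_\infty)_{\eta_0},\bA_{f,\psi,s})_{\Gamma_\ell}=0$ is the familiar local computation at primes away from $p$ in the cyclotomic tower: as $(\Q_\infty)_{\eta_0}/\Q_\ell$ is unramified, $H^1_{\mathrm{un}}((\Q_\infty)_{\eta_0},\bA_{f,\psi,s})\cong H^1\bigl(\Gal(\Q_\ell^{\mathrm{unr}}/(\Q_\infty)_{\eta_0}),(\bA_{f,\psi,s})^{I_\ell}\bigr)$, which one analyzes through the action of $\Frob_\ell$ exactly as in \cite[page~1645, proof of Lemma~2.3]{ponsinet} or \cite[second paragraph of page~1275]{HatleyLei19}; I only indicate this. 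The step I expect to carry the weight is the identification in the previous paragraph of the Pontryagin dual of the doubly signed local condition at $p$ with a submodule of a free $\Lambda$-module: this is precisely where one uses that the Coleman maps of \cite{BLV} have free target $\Lambda_{\O}(\Gamma_0)$ and that $|\Delta|$ is prime to $p$, and once that torsion-freeness is secured the remainder of the argument is purely formal.
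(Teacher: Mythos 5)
Your proposal is correct and follows essentially the same route as the paper: reduce surjectivity to the vanishing of $H^1(\Gamma,H^1_{\fS}((\Q_\infty)_v,\bA_{f,\psi,s}))=H^1_{\fS}((\Q_\infty)_v,\bA_{f,\psi,s})_\Gamma$, and obtain this by identifying the Pontryagin dual of the signed local condition with $\im\col^{\fS}_{f}$, a submodule of the free module $\Lambda_{\O}(\Gamma_0)^{\oplus 2}$, then passing to the $\Delta$-component; the away-from-$p$ case is deferred to the standard references exactly as in the paper.
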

\begin{proof}
Let $v$ be a prime above $p$. 
    Since $a_p(f_i)=0$, we have the decomposition
    $\bM_{f_i, \psi^{-1}} = M_{1,f_i, \psi^{-1}} \oplus M_{2,f_i, \psi^{-1}}$
    as $G_{\Q_p}$-modules where $M_{j,f_i, \psi^{-1}}$ is of rank $j$ (cf. \cite[Corollary 4.1.2]{BLV}). In \cite{BLV}, Büyükboduk--Lei--Venkat defined the Coleman maps $\col_{f_i}^\pm$ from the rank $2$ lattice $M_{2,f_i, \psi^{-1}}$ generalizing methods of \cite{leiloefflerzerbes11} and \cite{leiloefflerzerbes10} while the Coleman map $\col_{f_i}^\bullet$ was  defined using the  rank $1$ lattice $M_{1,f_i, \psi^{-1}}$ (cf. \cite[Lemma 4.1.5 and definition 4.2.1]{BLV}). More precisely, 
    they show the existence of Coleman maps
    \begin{align*}
        \overline{\col}^{\pm}_{f_i}: \HIw(\Q_p(\mu_{p^\infty}),M_{2, f_i, \psi^{-1}}) &\rightarrow \Lambda_{\O}(\Gamma_0)\\
        \overline{\col}^{\bullet}_{f_i}: \HIw(\Q_p(\mu_{p^\infty}),M_{1, f_i, \psi^{-1}}) &\rightarrow \Lambda_{\O}(\Gamma_0)
    \end{align*}
    such that $\col^?_{f_i} =  \overline{\col}^{?}_{f_i} \circ \mathrm{Proj}_{{f_i},?}. $
      Here $\mathrm{Proj}_{{f_i},?}:\HIw(\Q_p(\mu_{p^\infty}), \bM_{f, \psi^{-1}}) \rightarrow \HIw(\Q_p(\mu_{p^\infty}), M_{j,f_i, \psi^{-1}})$ is the natural projection map where $j=2$ if $?=+ \text{ or } -$, and $j=1$ if $?=\bullet$.

  Recall that $\mathcal{S}$ was the set of pairs $\{(+,-),(+,\bullet), (-,\bullet)\}.$ If $\fS = (\clubsuit, \spadesuit) \in \mathcal{S}$,
    then define
    \begin{align*}
       \col_{f_i}^{\fS}:\HIw(\Q_p(\mu_{p^\infty}), \bM_{f_i, \psi^{-1}}) &\rightarrow \Lambda_{\O}(\Gamma_0)^{\oplus 2} \\
       z &\mapsto \col_{f_i}^{\clubsuit} (z)\oplus \col_{f_i}^{\spadesuit} (z).
    \end{align*}
 
The Pontryagin dual of $H^1_{\fS}((\Q(\mu_{p^\infty}))_v,\bA_{f_i, \psi})$ is isomorphic to $\im \col_{f_i}^{\fS}$ which is contained in a free $\Z_p[[\Gamma_0]]$-module.
Therefore, the Pontryagin dual of 
   $H^1_{\fS}((\Q_\infty)_v,\bA_{f, \psi})$ is contained in a free $\Z_p[[\Gamma]]$-module.
   and hence $(H^1_{\fS}(\Q_\infty)_v,\bA_{f, \psi})_{\Gamma}=0$. This   implies that  $H^1_{\fS}((\Q_\infty)_v,\bA_{f, \psi,s})_{\Gamma}$ is trivial. 
   Therefore, we have an exact sequence
   $$0 \rightarrow H^1_{\fS}((\Q_\infty)_v,\bA_{f, \psi,s})^{\Gamma} \rightarrow H^1((\Q_\infty)_v,\bA_{f, \psi,s})^{\Gamma} \rightarrow \Big(\frac{H^1((\Q_\infty)_v,\bA_{f, \psi,s})}{H^1_{\fS}((\Q_\infty)_v,\bA_{f, \psi,s})}\Big)^{\Gamma} \rightarrow 0.$$
   By \textbf{(inv)}, $H^1(\Q_\infty)_v,\bA_{f, \psi,s})^{\Gamma} \cong H^1(\Q_p,\bA_{f, \psi,s})$ and by definition $ H^1_{\fS}((\Q_\infty)_v,\bA_{f, \psi,s})^{\Gamma} \cong H^1_{\fS}(\Q_p,\bA_{f, \psi,s})$. This gives a surjection
   $$\frac{H^1(\Q_p,\bA_{f, \psi,s})}{H^1_{\fS}(\Q_p,\bA_{f, \psi,s})}\rightarrow \Big(\frac{H^1((\Q_\infty)_v,\bA_{f, \psi,s})}{H^1_{\fS}((\Q_\infty)_v,\bA_{f, \psi,s})}\Big)^{\Gamma}.$$
   For the primes not above $p$, the proof is standard (cf. \cite[Lemma 2.5]{ponsinet}).   
\end{proof}
\noindent{\textit{Proof of Theorem  \ref{prop:finite}:}} 
Note that it will be sufficient to show Theorem \ref{prop:finite} for $\Sel_{\fS}(\bA_{f,\psi,s}/\Q_\infty)$ for some $s$. Fix some $s$ satisfying  Lemma \ref{lem: first} and Lemma \ref{lemma:second}. Using Poitou-Tate exact sequence \cite[Proposition A.3.2]{PR95}, $H^2(\Q_\infty, \bA_{f,\psi,s})$ injects into $\varinjlim \oplus H^2((\Q_{(n)})_v, \bA_{f,\psi,s}).$ By local Tate-duality, this is isomorphic to $\varprojlim \oplus H^0((\Q_{(n)})_v, \bM_{-s}).$ But this is zero (arguments as in \cite[Corollary 2.8]{HatleyLei19}). Using Hochschild-Serre spectral sequence, as in \cite[Proposition 3.2]{HatleyLei19}, one can deduce that $H^1(\Q_\infty, \bA_{f, \psi, s})$ has no nontrivial submodule of finite index. This implies that $$H^1(\Q_\infty,\bA_{f, \psi,s})_{\Gamma}=0.$$  Lemma \ref{lem: first} and Lemma \ref{lemma:second} gives that for all but finitely many $s \in \Z$ the map 
 $$H^1(\Q_\infty,\bA_{f, \psi,s})^{\Gamma} \rightarrow  \bigoplus_{v}\mathcal{P}_{v}(\bA_{f, \psi,s}/\Q_\infty )^{\Gamma} $$
     is surjective. Again by Lemma \ref{lem: first} we have the exact sequence 
     $$0 \rightarrow \Sel_{\fS}(\bA_{f,\psi,s}/\Q_\infty) \rightarrow H^1(\Q_\infty,\bA_{f, \psi,s}) \rightarrow  \bigoplus_{v}\mathcal{P}_{v}(\bA_{f, \psi,s}/\Q_\infty ) \rightarrow 0. $$
     On taking $\Gamma$-coinvariants we obtain the following long exact sequence.
     $$ H^1(\Q_\infty,\bA_{f, \psi,s})^\Gamma\rightarrow \bigoplus_{v}\mathcal{P}_{v}(\bA_{f, \psi,s}/\Q_\infty )^{\Gamma} \rightarrow \Sel_{\fS}(\bA_{f,\psi,s}/\Q_\infty)_{\Gamma} \rightarrow H^1(\Q_\infty,\bA_{f, \psi,s})_{\Gamma}.$$
     The proof follows by noting that the first map is surjective, hence the last map is injective, but $H^1(\Q_\infty,\bA_{f, \psi,s})_{\Gamma}=0$. Therefore, $\Sel_{\fS}(\bA_{f,\psi,s}/\Q_\infty)_{\Gamma}$ must be trivial. \qed
\vspace{.2cm}

From the following exact sequence 
$$0 \rightarrow \bA_{f, \psi}[\mathfrak{P}] \rightarrow \bA_{f, \psi} \rightarrow \bA_{f, \psi} \rightarrow 0 $$
we obtain that the sequence 
$$ 0 \rightarrow H^0(\Q_\infty, \bA_{f, \psi})/\mathfrak{P} \rightarrow H^1(\Q_\infty, \bA_{f, \psi}[\mathfrak{P}] ) \rightarrow H^1(\Q_\infty,\bA_{f, \psi})[\mathfrak{P}] \rightarrow 0. $$
As $H^0(\Q_\infty, \bA_{f, \psi})=0$ by \textbf{(inv)}, we obtain $$H^1(\Q_\infty, \bA_{f, \psi}[\mathfrak{P}] ) \cong H^1(\Q_\infty,\bA_{f, \psi})[\mathfrak{P}]. $$ The same proof also gives that, for a prime $v \mid p$, 
$$H^1((\Q_\infty)_v, \bA_{f, \psi}[\mathfrak{P}] ) \cong H^1((\Q_\infty)_v,\bA_{f, \psi})[\mathfrak{P}]. $$
Define $$H^1_{\fS}((\Q_\infty)_v, \bA_{f, \psi}[\mathfrak{P}] ):=H^1_{\fS}((\Q_\infty)_v, \bA_{f, \psi})[\mathfrak{P}].$$
It is also well-known that $H^1_{\ur}((\Q_\infty)_v,\bA_{f, \psi}[\mathfrak{P}])=H^1_{\ur}((\Q_\infty)_v,\bA_{f, \psi})[\mathfrak{P}]$ for primes $v\nmid p$ and $v\not\in \Sigma.$ 
\begin{definition}\label{def:imprimitive} The $\Sigma_0$-imprimitive signed Selmer group $\Sel_{\fS}^{\Sigma_0}(\bA_{f,\psi}/\Q_\infty)$ for the Galois module $\bA_{f,\psi}$ is define as the kernel of the following map 
$$H^1(\Q_\infty,\bA_{f, \psi}) \rightarrow \prod_{v \mid p} \frac{H^1((\Q_\infty)_v,\bA_{f, \psi})}{H_{\fS}^1((\Q_\infty)_v,\bA_{f, \psi})}  \times \prod_{v \in \Sigma\backslash \Sigma_0}\frac{H^1((\Q_\infty)_v,\bA_{f, \psi})}{H^1_{\ur}((\Q_\infty)_v,\bA_{f, \psi})}.$$
The $\Sigma_0$-imprimitive signed Selmer group $\Sel_{\fS}^{\Sigma_0}(\bA_{f,\psi}[\mathfrak{P}]/\Q_\infty)$ for the Galois module $\bA_{f,\psi}[\mathfrak{P}]$ is define as the kernel of the following map 
$$H^1(\Q_\infty,\bA_{f, \psi}[\mathfrak{P}]) \rightarrow \prod_{v \mid p} \frac{H^1((\Q_\infty)_v,\bA_{f, \psi}[\mathfrak{P}])}{H_{\fS}^1((\Q_\infty)_v,\bA_{f, \psi}[\mathfrak{P}])}  \times \prod_{v \in \Sigma\backslash \Sigma_0}\frac{H^1((\Q_\infty)_v,\bA_{f, \psi}[\mathfrak{P}])}{H^1_{\ur}((\Q_\infty)_v,\bA_{f, \psi}[\mathfrak{P}])}.$$  
\end{definition}
By the discussion before definition \ref{def:imprimitive}, it follows that the local conditions defining the Selmer groups $\Sel_{\fS}^{\Sigma_0}(\bA_{f,\psi}/\Q_\infty)[\mathfrak{P}]$ and $ \Sel_{\fS}^{\Sigma_0}(\bA_{f,\psi}[\mathfrak{P}]/\Q_\infty)$ are the same. Hence there is a $\Lambda$-module isomorphism 
\begin{equation}\label{eq:jishnu18}
    \Sel_{\fS}^{\Sigma_0}(\bA_{f,\psi}/\Q_\infty)[\mathfrak{P}] \cong  \Sel_{\fS}^{\Sigma_0}(\bA_{f,\psi}[\mathfrak{P}]/\Q_\infty).
\end{equation}
\begin{lemma}\label{lem:4.9}
    Assume that $\Sel_{\fS}(\bA_{f, \psi})$ and $\Sel_{\fS}(\bJ_{f, \psi^{-1},-t})$ are cotorsion $\Lambda$-modules for some fixed $t\in \Z$. Then the Selmer group $\Sel^{\Sigma_0}_{\fS}(\bA_{f, \psi})$ is also $\Lambda$-cotorsion and 
     $$\mu(\Sel_{\fS}(\bA_{f, \psi})) =\mu(\Sel^{\Sigma_0}_{\fS}(\bA_{f, \psi})).$$
\end{lemma}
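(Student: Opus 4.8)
The plan is to compare the $\Sigma_0$-imprimitive Selmer group with the primitive one via the short exact sequence relating them, exactly as in the ordinary case (cf. \eqref{eq:Sel1} and \eqref{eq: lambda}), but now paying attention to the signed local conditions at $p$. First I would establish that the global-to-local defining map for $\Sel_{\fS}(\bA_{f,\psi})$ is surjective onto the product of local terms. This is precisely Lemma \ref{lem: first}: the hypothesis that $\Sel_{\fS}(\bJ_{f,\psi^{-1},-t})$ is $\Lambda$-cotorsion for some fixed $t$ is what powers that surjectivity statement (after a cyclotomic twist, which does not affect cotorsionness, cotorsion $\mu$ or $\lambda$ up to a shift, and in any case here we only care about cotorsionness and the $\mu$-invariant, which are twist-invariant since $\bA_{f,\psi,s}\cong\bA_{f,\psi}$ as $\Gal(\overline{\Q}/\Q_\infty)$-modules and $\Sel_{\fS}(\bA_{f,\psi,s}/\Q_\infty)\cong\Sel_{\fS}(\bA_{f,\psi}/\Q_\infty)\otimes(\omega|_\Gamma)^s$). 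Given that surjectivity, one has the short exact sequence
$$0 \rightarrow \Sel_{\fS}(\bA_{f,\psi}/\Q_\infty) \rightarrow \Sel_{\fS}^{\Sigma_0}(\bA_{f,\psi}/\Q_\infty) \rightarrow \bigoplus_{\ell \in \Sigma_0}\mathcal{H}_\ell(\Q_\infty,\bA_{f,\psi}) \rightarrow 0,$$
since removing the local conditions at $\ell\in\Sigma_0$ from the kernel enlarges it by exactly the image of the localization map at those primes, and that map is onto by the same surjectivity argument applied to the sub-collection of local terms (the local conditions at $p$ and at $\infty$ are untouched).

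Next I would invoke Lemma \ref{eq:Lemma1}: each $\mathcal{H}_\ell(\Q_\infty,\bA_{f,\psi})$ for $\ell\neq p$ is a cofinitely generated, cotorsion $\Lambda$-module with trivial $\mu$-invariant. Cotorsionness of $\Sel_{\fS}(\bA_{f,\psi})$ is assumed in the hypothesis, so the middle term $\Sel_{\fS}^{\Sigma_0}(\bA_{f,\psi})$ is an extension of a cotorsion module by a cotorsion module, hence cotorsion. For the $\mu$-invariants, $\mu$ is additive in short exact sequences of cofinitely generated cotorsion $\Lambda$-modules (equivalently, additive on the Pontryagin duals, which form a short exact sequence of finitely generated torsion $\Lambda$-modules), so
$$\mu(\Sel_{\fS}^{\Sigma_0}(\bA_{f,\psi})) = \mu(\Sel_{\fS}(\bA_{f,\psi})) + \sum_{\ell\in\Sigma_0}\mu(\mathcal{H}_\ell(\Q_\infty,\bA_{f,\psi})) = \mu(\Sel_{\fS}(\bA_{f,\psi})),$$
the last sum vanishing by Lemma \ref{eq:Lemma1}. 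This gives the claimed equality.

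The main obstacle is the surjectivity of the global-to-local map with the \emph{signed} local condition at $p$ in place; the usual Greenberg-style argument that the cokernel of the global-to-local map is dual to an $H^0$-term (as in the proof of Lemma \ref{lem: first} via \cite[Proposition 4.13]{Greenberg}) is what I would lean on, combined with the vanishing $H^0(\Q_\infty,\bJ_{f,\psi^{-1},-s})=0$ coming from \textbf{(inv)} and the pro-$p$ nature of $\Gal(\Q_\infty/\Q)$. One subtlety worth flagging: Lemma \ref{lem: first} is stated for all but finitely many twists $s$, so strictly one should first pass to such an $s$, run the argument for $\Sel_{\fS}(\bA_{f,\psi,s})$ and $\Sel_{\fS}^{\Sigma_0}(\bA_{f,\psi,s})$, and then transport the conclusion back via the twisting isomorphism, which preserves both cotorsionness and the $\mu$-invariant. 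Everything else — additivity of $\mu$, the triviality of $\mu$ for the off-$p$ local terms — is standard Iwasawa-theoretic bookkeeping.
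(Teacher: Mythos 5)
Your proposal is correct and follows essentially the same route as the paper: cotorsionness of the imprimitive group from the cotorsion local terms, surjectivity of the global-to-local map via Lemma \ref{lem: first} (powered by the cotorsion hypothesis on $\Sel_{\fS}(\bJ_{f,\psi^{-1},-t})$), the resulting signed analogue of \eqref{eq:Sel1}, and then Lemma \ref{eq:Lemma1} to kill the $\mu$-contribution of the terms at $\ell\in\Sigma_0$. Your explicit handling of the twist subtlety in Lemma \ref{lem: first} is a point the paper leaves implicit, but it does not change the argument.
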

\begin{proof}
    The Pontryagin dual of $\Sel^{\Sigma_0}_{\fS}(\bA_{f, \psi})$ is a quotient of $\Sel_{\fS}(\bA_{f, \psi})$ and hence $\Sel^{\Sigma_0}_{\fS}(\bA_{f, \psi})$ is cotorsion. As the Selmer group $\Sel_{\fS}(\bJ_{f, \psi^{-1},-t})$ is cotorsion, by Lemma \ref{lem: first}, the global to local map defining the Selmer group $\Sel_{\fS}(\bA_{f, \psi})$ is surjective. Hence the global to local map defining the Selmer group $\Sel^{\Sigma_0}_{\fS}(\bA_{f, \psi})$ is also surjective. Also, $H^1_{\ur}((\Q_\infty)_v, \bA_{f, \psi}))=0$ for $v \nmid p$ (cf. \cite[A.2.4]{PR95}). Therefore,
    \eqref{eq:Sel1} holds where the Selmer groups are replaced by the corresponding signed versions. The proof now follows from Lemma \ref{eq:Lemma1}.
\end{proof}
\begin{remark}\label{rem:one}
      Assume that $\Sel_{\fS}(\bA_{f, \psi})$ and $\Sel_{\fS}(\bJ_{f, \psi^{-1},-t})$ are cotorsion $\Lambda$-modules for some fixed $t\in \Z$. Then the imprimitive signed Selmer group
$\Sel^{\Sigma_0}_{\fS}(\bA_{f, \psi})$ contains no proper $\Lambda$-submodule of finite index. The proof is the same as the proof of  Theorem \ref{prop:finite} and hence omitted.
\end{remark}
Now suppose $f_i$ (for $i=1,2$) are modular forms of level $N_i$ and character $\varepsilon_i$ as in section \ref{sec:congruent} with isomorphic residual representations $\tilde{\rho_1} \cong \tilde{\rho_2}$. Suppose that $a_p(f_i)=0$ for both $i =1,2$. Let $\Sigma$ be a set of places of $\Q$ that contains $p, \infty$, the primes dividing $N_1N_2$ and  the primes dividing the conductor $N_\psi$ and as before let $\Sigma_0=\Sigma \backslash \{p,\infty\}.$

\begin{proposition}\label{prop:congprop}
    We have the following isomorphism as $\Lambda$-modules.
    $$\Sel_{\fS}^{\Sigma_0}(\bA_{f_1, \psi}[\mathfrak{P}]/\Q_\infty)\cong \Sel_{\fS}^{\Sigma_0}(\bA_{f_2, \psi}[\mathfrak{P}]/\Q_\infty).$$
\end{proposition}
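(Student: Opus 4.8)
The plan is to exploit the fact that the two residual Galois modules coincide, and then to transport the defining local conditions of the imprimitive signed Selmer groups along the resulting isomorphism. First I would establish a $G_\Q$-equivariant isomorphism $\iota\colon \bA_{f_1,\psi}[\mathfrak{P}]\isomto \bA_{f_2,\psi}[\mathfrak{P}]$. Since $p$ is odd, $\mathrm{Sym}^2$ commutes with reduction modulo $\mathfrak{P}$, so that $\bA_{f_i,\psi}[\mathfrak{P}]\cong \mathrm{Sym}^2(\tilde\rho_i)\otimes\tilde\psi$ up to a fixed twist common to $f_1$ and $f_2$ (coming from the normalisation $\bM_{f_i,\psi^{-1}}^\vee(1)=\bA_{f_i,\psi}$); as $\tilde\rho_1\cong\tilde\rho_2$ by hypothesis and $k$, $\tilde\psi$ are common, this produces the desired $\iota$. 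Fixing such an $\iota$, functoriality of Galois cohomology yields a $\Lambda$-module isomorphism $\iota_*\colon H^1(\Q_\infty,\bA_{f_1,\psi}[\mathfrak{P}])\isomto H^1(\Q_\infty,\bA_{f_2,\psi}[\mathfrak{P}])$ together with compatible local isomorphisms $\iota_{*,v}$ for each prime $v$ of $\Q_\infty$. The whole task is then to check that $\iota_{*,v}$ carries the local condition for $f_1$ onto that for $f_2$ at every $v$, for then $\iota_*$ restricts to the asserted isomorphism of Selmer groups.

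Away from $p$ this is routine, and it is precisely the reason for passing to the $\Sigma_0$-imprimitive groups. At primes $v$ lying over $\Sigma_0$ (which here is exactly the set of primes dividing $N_1N_2N_\psi$) the imprimitive Selmer groups impose no condition at all, so there is nothing to compare — this sidesteps the fact that $\tilde\rho_1$ and $\tilde\rho_2$, although abstractly isomorphic, carry no canonical local data at the bad primes. At the remaining primes $v\nmid p$ the condition is the unramified subgroup $H^1_{\ur}((\Q_\infty)_v,-)$, which is functorial in the Galois module and which, by the discussion preceding Definition \ref{def:imprimitive}, equals $H^1_{\ur}((\Q_\infty)_v,\bA_{f_i,\psi})[\mathfrak{P}]$; hence $\iota_{*,v}$ matches the two conditions.

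The substantive point is the prime $v\mid p$: one must show $\iota_{*,v}\big(H^1_{\fS}((\Q_\infty)_v,\bA_{f_1,\psi}[\mathfrak{P}])\big)=H^1_{\fS}((\Q_\infty)_v,\bA_{f_2,\psi}[\mathfrak{P}])$. By definition this condition is $H^1_{\fS}((\Q_\infty)_v,\bA_{f_i,\psi})[\mathfrak{P}]$, and $H^1_{\fS}((\Q_\infty)_v,\bA_{f_i,\psi})=H^1_{\fS}(\Q(\mu_{p^\infty})_v,\bA_{f_i,\psi})^\Delta$ with $|\Delta|$ prime to $p$; as recalled in the proof of Lemma \ref{lemma:second}, the Pontryagin dual of $H^1_{\fS}(\Q(\mu_{p^\infty})_v,\bA_{f_i,\psi})$ is $\im\col_{f_i}^{\fS}$, where the signed Coleman map $\col_{f_i}^{\fS}$ is built from the $G_{\Q_p}$-decomposition $\bM_{f_i,\psi^{-1}}=M_{1,f_i,\psi^{-1}}\oplus M_{2,f_i,\psi^{-1}}$ afforded by $a_p(f_i)=0$. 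The key claim is that, modulo $\mathfrak{P}$, all of this data is intrinsic to $\bA_{f_i,\psi}[\mathfrak{P}]$ as a $G_{\Q_p}$-module: the above decomposition reduces modulo $\mathfrak{P}$ to the decomposition of the residual module $\overline{\bM}_{f_i,\psi^{-1}}|_{G_{\Q_p}}$ into its (generically irreducible) two-dimensional summand and its one-dimensional summand — the only such decomposition — and the reductions of the maps $\overline{\col}^{\clubsuit}_{f_i},\overline{\col}^{\spadesuit}_{f_i}$, whose construction uses only this decomposition together with $k$ and $\tilde\psi$, are therefore intertwined by $\iota$. Passing to orthogonal complements under the local Tate pairing (itself intertwined by $\iota$) and then to Pontryagin duals shows that the signed conditions modulo $\mathfrak{P}$ correspond under $\iota_{*,v}$. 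I expect this verification — that the Büyükboduk--Lei--Venkat signed local condition at $p$ depends, modulo $\mathfrak{P}$, only on $\bA_{f,\psi}[\mathfrak{P}]$ as a $G_{\Q_p}$-module — to be the main obstacle; should one prefer not to unwind the Coleman-map construction, one can instead invoke the general congruence of signed local conditions for residually isomorphic non-ordinary forms established in the style of \cite{HatleyLei19, ponsinet}, whose hypotheses our assumptions fulfil.

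Combining the three cases, $\iota_*$ carries the kernel of the global-to-local map defining $\Sel^{\Sigma_0}_{\fS}(\bA_{f_1,\psi}[\mathfrak{P}]/\Q_\infty)$ isomorphically onto the one defining $\Sel^{\Sigma_0}_{\fS}(\bA_{f_2,\psi}[\mathfrak{P}]/\Q_\infty)$, and since $\iota_*$ is $\Lambda$-linear this gives the desired $\Lambda$-module isomorphism.
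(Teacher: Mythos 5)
Your proposal follows the same skeleton as the paper's proof: identify the residual global and local cohomology away from $p$ (where the imprimitive condition and the unramified condition make the comparison immediate), and reduce the whole problem to showing that the signed local condition at $p$, i.e.\ $H^1_{\fS}$ modulo $\mathfrak{P}$, is matched under the residual isomorphism, via the identification of its Pontryagin dual with $\im\col_{f_i}^{\fS}/\mathfrak{P}\im\col_{f_i}^{\fS}$ and descent from $\Q(\mu_{p^\infty})_v$ to $(\Q_\infty)_v$ by taking $\Delta$-invariants. The one place where you diverge is the justification of the key step at $p$. Your primary argument — that the decomposition $\bM_{f_i,\psi^{-1}}=M_{1,f_i,\psi^{-1}}\oplus M_{2,f_i,\psi^{-1}}$ and the maps $\overline{\col}^{\clubsuit}_{f_i},\overline{\col}^{\spadesuit}_{f_i}$ are, modulo $\mathfrak{P}$, ``intrinsic'' to the residual $G_{\Q_p}$-module — is asserted rather than proved, and as stated it is not quite how the construction works: the Coleman maps are built from the Wach modules of the characteristic-zero lattices $M_{j,f_i,\psi^{-1}}$, not from the residual representation alone, so what one actually needs is that congruent lattices have congruent Wach modules and hence congruent Iwasawa cohomology and congruent Coleman-map images. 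That is precisely the route the paper takes (citing the theory of Wach modules as in Ponsinet, Section 3.1), and it is also the fallback you explicitly offer; since you correctly identify this as the main obstacle and point to the right references, the argument is essentially complete, but you should promote the Wach-module congruence from a fallback to the actual proof of the step at $p$, rather than relying on the informal ``intrinsicness'' claim.
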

\begin{proof}
    Clearly, we have the isomorphisms 
    $$H^1(\Q_\infty,\bA_{f_1, \psi}[\mathfrak{P}]) \cong  H^1(\Q_\infty,\bA_{f_2, \psi}[\mathfrak{P}]) \text{ and }\mathcal{P}_v(\bA_{f_1, \psi}[\mathfrak{P}]/\Q_\infty) \cong \mathcal{P}_v(\bA_{f_2, \psi}[\mathfrak{P}]/\Q_\infty)$$ for primes $v \nmid p$.
    \vspace{.2cm}

    Since $\bT_{f_1,\psi}/\mathfrak{P}\bT_{f_1,\psi} \cong \bT_{f_2,\psi}/\mathfrak{P}\bT_{f_2,\psi}$, this gives, by duality,  the congruence $$M_{1,f_1, \psi^{-1}} /\mathfrak{P} M_{1,f_1, \psi^{-1}} \cong M_{1,f_2, \psi^{-1}} /\mathfrak{P} M_{1,f_2, \psi^{-1}} \text{ and } M_{2,f_1, \psi^{-1}} /\mathfrak{P} M_{2,f_1, \psi^{-1}} \cong M_{2,f_2, \psi^{-1}} /\mathfrak{P} M_{2,f_2, \psi^{-1}}.$$
    By the theory of Wach modules, these congruence implies that the Iwasawa cohomologies $\HIw(\Q_p(\mu_{p^\infty}),M_{j,f_1, \psi^{-1}})$ and $\HIw(\Q_p(\mu_{p^\infty}),M_{j,f_2, \psi^{-1}})$  are congruent modulo $\mathfrak{P}$ for $j=1,2$ (cf. \cite[Section 3.1]{ponsinet}). Therefore, we have the following congruence of the images of signed Coleman maps defined in the proof of Lemma \ref{lemma:second}.
    \begin{align}
       \im(\overline{\col}_{f_1}^?)/\mathfrak{P}\im(\overline{\col}_{f_1}^?)&\cong \im(\overline{\col}_{f_2}^?)/\mathfrak{P}\im(\overline{\col}_{f_2}^?) \text{ for } ?\in \{\pm,\bullet\}. \label{eq:align1} 
    \end{align}
    Recall that $\mathcal{S}$ was the set of pairs $\{(+,-),(+,\bullet), (-,\bullet)\}.$ If $\fS = (\clubsuit, \spadesuit) \in \mathcal{S}$,
    then the following Coleman map was defined in the proof of Lemma \ref{lemma:second}.
    \begin{align*}
       \col_{f_i}^{\fS}:\HIw(\Q_p(\mu_{p^\infty}), \bM_{f_i, \psi^{-1}}) &\rightarrow \Lambda_{\O}(\Gamma_0)^{\oplus 2} \\
       z &\mapsto \col_{f_i}^{\clubsuit} (z)\oplus \col_{f_i}^{\spadesuit} (z).
    \end{align*}

The Pontryagin dual of $H^1_{\fS}((\Q(\mu_{p^\infty}))_v,\bA_{f_i, \psi}[\mathfrak{P}])$ is isomorphic to $\im \col_{f_i}^{\fS}/\mathfrak{P}\im \col_{f_i}^{\fS}$. Therefore, using \eqref{eq:align1}, we deduce that 
$$H^1_{\fS}((\Q(\mu_{p^\infty}))_v,\bA_{f_1, \psi}[\mathfrak{P}]) \cong H^1_{\fS}((\Q(\mu_{p^\infty}))_v,\bA_{f_2, \psi}[\mathfrak{P}]). $$ 
Taking $\Delta$-invariance, one obtains $$H^1_{\fS}((\Q_\infty)_v,\bA_{f_1, \psi}[\mathfrak{P}])\cong H^1_{\fS}((\Q_\infty)_v,\bA_{f_2, \psi}[\mathfrak{P}])$$ and this completes the proof of the proposition.   
\end{proof}
\begin{theorem}\label{prop: lambda}
Assume that the hypothesis of Lemma \ref{lem:4.9} is true for both $f_1$ and $f_2$.  Then the $\mu$-invariant of $\Sel_{\fS}(\bA_{f_1, \psi}/\Q_\infty)$ vanishes if and only if the $\mu$-invariant of $\Sel_{\fS}(\bA_{f_2, \psi}/\Q_\infty)$ vanishes. When these $\mu$-invariants are trivial, then the imprimitive signed $\lambda$-invariants coincide, i.e. 
$$\lambda (\Sel^{\Sigma_0}_{\fS}(\bA_{f_1, \psi}/\Q_\infty))=\lambda (\Sel^{\Sigma_0}_{\fS}(\bA_{f_2, \psi}/\Q_\infty)).$$
\end{theorem}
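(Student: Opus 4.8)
The plan is to mimic the classical Greenberg--Vatsal style argument for comparing $\lambda$-invariants of congruent Galois representations, now adapted to the signed Selmer setting. First I would show that, under the hypotheses of Lemma \ref{lem:4.9} for both $f_1$ and $f_2$, the Pontryagin duals $X_{\fS}^{\Sigma_0}(\bA_{f_i,\psi}/\Q_\infty):=\Sel^{\Sigma_0}_{\fS}(\bA_{f_i,\psi}/\Q_\infty)^\vee$ are finitely generated $\Lambda$-modules with no nonzero finite $\Lambda$-submodule (this is exactly Remark \ref{rem:one}). For such a module $X$, the standard structure-theory fact is that $\mu(X)=0$ if and only if $X/\mathfrak{P}X$ is a finitely generated $\Z_p[[\Gamma]]/\mathfrak{P}$-module, i.e. $X[\mathfrak{P}]^\vee$ (equivalently $\Sel^{\Sigma_0}_{\fS}(\bA_{f_i,\psi}/\Q_\infty)[\mathfrak{P}]$) is cofinitely generated over $\Z_p[[\Gamma]]$; and in that case $\lambda(X)=\mathrm{corank}_{k_L[[\Gamma]]}\bigl(X[\mathfrak{P}]\bigr)$, the dimension over $k_L$ of the cofree part. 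So the whole theorem reduces to understanding $\Sel^{\Sigma_0}_{\fS}(\bA_{f_i,\psi}[\mathfrak{P}]/\Q_\infty)$.

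Next I would invoke the two isomorphisms already established: the mod-$\mathfrak{P}$ comparison \eqref{eq:jishnu18}, which identifies $\Sel^{\Sigma_0}_{\fS}(\bA_{f_i,\psi}/\Q_\infty)[\mathfrak{P}]$ with $\Sel^{\Sigma_0}_{\fS}(\bA_{f_i,\psi}[\mathfrak{P}]/\Q_\infty)$, and Proposition \ref{prop:congprop}, which gives a $\Lambda$-module isomorphism
$$\Sel^{\Sigma_0}_{\fS}(\bA_{f_1,\psi}[\mathfrak{P}]/\Q_\infty)\cong\Sel^{\Sigma_0}_{\fS}(\bA_{f_2,\psi}[\mathfrak{P}]/\Q_\infty).$$
Chaining these, $\Sel^{\Sigma_0}_{\fS}(\bA_{f_1,\psi}/\Q_\infty)[\mathfrak{P}]\cong\Sel^{\Sigma_0}_{\fS}(\bA_{f_2,\psi}/\Q_\infty)[\mathfrak{P}]$ as $\Lambda$-modules. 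From this single identification both conclusions follow: the $\Z_p[[\Gamma]]$-corank (in particular finiteness of the corank, hence by the criterion above the vanishing of $\mu$) of the left side equals that of the right side, giving the $\mu$-equivalence; and when both $\mu$-invariants vanish, the $k_L[[\Gamma]]$-coranks agree, which by the formula recalled in the first paragraph are precisely $\lambda(\Sel^{\Sigma_0}_{\fS}(\bA_{f_1,\psi}/\Q_\infty))$ and $\lambda(\Sel^{\Sigma_0}_{\fS}(\bA_{f_2,\psi}/\Q_\infty))$ respectively. (One also uses Lemma \ref{lem:4.9} to know these imprimitive Selmer groups are $\Lambda$-cotorsion, so that the $\lambda$-invariants are defined in the first place.)

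The main obstacle is entirely contained in Proposition \ref{prop:congprop} and in justifying that the ``no finite submodule'' property transfers to the imprimitive group --- but both of these are already in hand in the excerpt (Proposition \ref{prop:congprop} is proved, and Remark \ref{rem:one} records the finite-submodule statement). So the remaining work in this theorem is the purely formal bookkeeping of translating a mod-$\mathfrak{P}$ isomorphism into statements about $\mu$- and $\lambda$-invariants via the structure theory of $\Lambda$-modules with no finite submodule. I would be careful on one point: to read off $\mu$-vanishing from $X[\mathfrak{P}]$ being $\Z_p[[\Gamma]]$-cofinitely generated one genuinely needs $X$ to have no nonzero finite submodule (otherwise $X/\mathfrak{P}X$ can be large while $\mu(X)=0$ fails, or conversely), which is exactly why Lemma \ref{lem:4.9} and Remark \ref{rem:one} are invoked; I would state this reduction as a short lemma or cite the analogous step in \cite{HatleyLei19} or \cite{RaySujathaVatsal} rather than reprove it.
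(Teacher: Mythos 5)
Your proposal is correct and follows essentially the same route as the paper: reduce both assertions to the residual Selmer group via \eqref{eq:jishnu18} and Proposition \ref{prop:congprop}, and use the no-finite-submodule property (Remark \ref{rem:one}) to identify $\lambda$ with $\dim_{\mathbb{F}_p}\Sel^{\Sigma_0}_{\fS}(\bA_{f_i,\psi}/\Q_\infty)[\mathfrak{P}]$ once $\mu$ vanishes. Two small points of bookkeeping: the $\mu$-statement in the theorem concerns the primitive groups $\Sel_{\fS}(\bA_{f_i,\psi}/\Q_\infty)$, so you must invoke Lemma \ref{lem:4.9} for the equality $\mu(\Sel_{\fS})=\mu(\Sel^{\Sigma_0}_{\fS})$ and not only for cotorsionness, and the correct criterion for $\mu=0$ is that $\Sel^{\Sigma_0}_{\fS}(\bA_{f_i,\psi}/\Q_\infty)[\mathfrak{P}]$ be \emph{finite} (equivalently, the dual be finitely generated over $\O$), rather than cofinitely generated over $\Z_p[[\Gamma]]$.
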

\begin{proof}
    By Lemma \ref{lem:4.9}, the $\mu$-invariant of $\Sel_{\fS}(\bA_{f_i, \psi}/\Q_\infty)$ vanishes if and only if $\Sel_{\fS}^{\Sigma_0}(\bA_{f_i, \psi}/\Q_\infty)$ is cofinitely generated as an $\O$-module which is true if and only if $\Sel_{\fS}^{\Sigma_0}(\bA_{f_i, \psi}/\Q_\infty)[\mathfrak{P}]$ is finite. By \eqref{eq:jishnu18}, $\Sel_{\fS}^{\Sigma_0}(\bA_{f_i, \psi}/\Q_\infty)[\mathfrak{P}]$ is finite if and only if  $\Sel_{\fS}^{\Sigma_0}(\bA_{f_i, \psi}[\mathfrak{P}]/\Q_\infty)$ is finite.  Then Proposition \ref{prop:congprop} gives that $\Sel_{\fS}^{\Sigma_0}(\bA_{f_1, \psi}[\mathfrak{P}]/\Q_\infty)$ is finite if and only if $\Sel_{\fS}^{\Sigma_0}(\bA_{f_2, \psi}[\mathfrak{P}]/\Q_\infty)$ is finite. 
\vspace{.2cm}
Since the imprimitive Selmer group $\Sel^{\Sigma_0}_{\fS}(\bA_{f_i, \psi}/\Q_\infty)$ has no proper $\Lambda$-submodule of finite index, one case easily show that $$\lambda(\Sel^{\Sigma_0}_{\fS}(\bA_{f_i, \psi}/\Q_\infty))=\dim_{\mathbb{F}_p}\Sel_{\fS}^{\Sigma_0}(\bA_{f_i, \psi}/\Q_\infty)[\mathfrak{P}].$$ But $\Sel_{\fS}^{\Sigma_0}(\bA_{f_1, \psi}/\Q_\infty)[\mathfrak{P}]\cong \Sel_{\fS}^{\Sigma_0}(\bA_{f_2, \psi}/\Q_\infty)[\mathfrak{P}]$ and hence their $\mathbb{F}_p$-dimensions coincide. 
\end{proof}
Theorem  \ref{prop: lambda} yields the following theorem. The proof is the same as in the ordinary case, since it only involves computing the $\lambda$-invariants of $\mathcal{P}_v(\bA_{f_i,\psi}/\Q_\infty)$ for primes $v$ that lie above places of $\Sigma_0$ and hence different from $p$. 
  \begin{theorem}\label{Mainthm2} Assume the vanishing of the $\mu$-invariants as in Theorem \ref{prop: lambda}. 
    We have the congruence
    $$\lambda(\Sel_{\fS}(\bA_{f_1, \psi}/\Q_\infty)) +|\mathcal{S}_{f_1,\psi}|\equiv \lambda(\Sel_{\fS}(\bA_{f_2, \psi}/\Q_\infty))+|\mathcal{S}_{f_2,\psi}| \pmod{2}$$
    where $\mathcal{S}_{f_i,\psi}$ are the set of primes in $\Sigma$ defined as in proposition \ref{prop: mod2}.
\end{theorem}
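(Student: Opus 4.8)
\textbf{Proof plan for Theorem \ref{Mainthm2}.}
The strategy is to combine Theorem \ref{prop: lambda} with the local-term computations from Section \ref{basics}, exactly mirroring the argument that deduced Theorem \ref{Mainthm} from Proposition \ref{prop:Anwesh}. First I would record the analogue of the relation \eqref{eq: lambda} in the signed setting: since the imprimitive signed Selmer group $\Sel^{\Sigma_0}_{\fS}(\bA_{f_i,\psi}/\Q_\infty)$ differs from $\Sel_{\fS}(\bA_{f_i,\psi}/\Q_\infty)$ only at the places of $\Sigma_0$, and (by the surjectivity of the global-to-local map established in Lemma \ref{lem: first}, together with $H^1_{\ur}((\Q_\infty)_v,\bA_{f_i,\psi})=0$ for $v\nmid p$) the analogue of \eqref{eq:Sel1} holds, one gets
$$\lambda(\Sel^{\Sigma_0}_{\fS}(\bA_{f_i,\psi}/\Q_\infty))=\lambda(\Sel_{\fS}(\bA_{f_i,\psi}/\Q_\infty))+\sum_{\ell\in\Sigma_0}\delta_\ell(\bA_{f_i,\psi}/\Q_\infty),$$
where $\delta_\ell$ is the $\lambda$-invariant of $\mathcal{H}_\ell(\Q_\infty,\bA_{f_i,\psi})$ just as in the ordinary case, because these local terms at $\ell\neq p$ are defined purely in terms of $\bA_{f_i,\psi}$ restricted to $G_{\Q_\ell}$ and do not see the signed condition at $p$ at all.

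Next I would invoke Proposition \ref{prop: mod2} verbatim: it is a statement about $\sum_{\ell\in\Sigma_0}\delta_\ell(\bA_{f_i,\psi}/\Q_\infty)\pmod 2$ whose proof (via the Greenberg--Vatsal proposition and Lemmas \ref{lem:noN}, \ref{lem: ii}, \ref{lem: iii}) uses only the shape of $\bV_{f_i,\psi}|_{G_{\Q_\ell}}$ and the hypothesis \textbf{(Hyp)}, both of which are in force here. Hence $\sum_{\ell\in\Sigma_0}\delta_\ell(\bA_{f_i,\psi}/\Q_\infty)\equiv|\mathcal{S}_{f_i,\psi}|\pmod 2$ for $i=1,2$ with the same sets $\mathcal{S}_{f_i,\psi}$ as before. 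Combining this with the displayed relation above gives
$$\lambda(\Sel_{\fS}(\bA_{f_i,\psi}/\Q_\infty))+|\mathcal{S}_{f_i,\psi}|\equiv\lambda(\Sel^{\Sigma_0}_{\fS}(\bA_{f_i,\psi}/\Q_\infty))\pmod 2$$
for each $i$.

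Finally, Theorem \ref{prop: lambda} (whose hypotheses are precisely the vanishing of the $\mu$-invariants assumed in the statement) gives the equality $\lambda(\Sel^{\Sigma_0}_{\fS}(\bA_{f_1,\psi}/\Q_\infty))=\lambda(\Sel^{\Sigma_0}_{\fS}(\bA_{f_2,\psi}/\Q_\infty))$. Feeding this into the two congruences from the previous paragraph and chaining them yields
$$\lambda(\Sel_{\fS}(\bA_{f_1,\psi}/\Q_\infty))+|\mathcal{S}_{f_1,\psi}|\equiv\lambda(\Sel_{\fS}(\bA_{f_2,\psi}/\Q_\infty))+|\mathcal{S}_{f_2,\psi}|\pmod 2,$$
which is the claim. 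I do not expect a genuine obstacle here, since all the hard work is packaged into the earlier results; the only point requiring a line of care is checking that the imprimitive-versus-primitive comparison \eqref{eq:Sel1} truly transports to the signed setting, which is exactly what Lemma \ref{lem:4.9} and Remark \ref{rem:one} already guarantee, so the proof is a short assembly of Theorem \ref{prop: lambda}, the signed analogue of \eqref{eq: lambda}, and Proposition \ref{prop: mod2}.
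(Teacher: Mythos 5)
Your proposal is correct and follows essentially the same route as the paper: the paper's proof of Theorem \ref{Mainthm2} is exactly the assembly you describe, namely the signed analogue of \eqref{eq: lambda} (furnished by Lemma \ref{lem:4.9} via the surjectivity from Lemma \ref{lem: first}), Proposition \ref{prop: mod2} applied verbatim since the local terms away from $p$ do not see the signed condition, and the equality of imprimitive $\lambda$-invariants from Theorem \ref{prop: lambda}.
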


\section{Data availability}
This article does not have any associated data. 
\bibliographystyle{alpha}
\bibliography{main}
\end{document}